\documentclass[]{amsart}
\usepackage{lmodern}
   
\usepackage{amsmath}
\usepackage{amssymb}
\usepackage{graphicx}
\usepackage{subcaption}
\usepackage{float}
\usepackage{amsxtra}
\usepackage{mathtools}
\usepackage{booktabs}
\usepackage{enumerate}
\usepackage{nicefrac}
\mathtoolsset{showonlyrefs} % enumera solo las ecuaciones que se citan
\usepackage{ mathrsfs }
\usepackage{tikz}
\usetikzlibrary{positioning, calc}
\usetikzlibrary{shadings}
\usepackage{color}
\usepackage{hyperref}
\hypersetup{
	colorlinks   = true, %Colours links instead of ugly boxes
	urlcolor     = blue, %Colour for external hyperlinks
	linkcolor    = blue, %Colour of internal links
	citecolor   = red %Colour of citations
}

\newtheorem{theorem}{Theorem}[section]
\newtheorem{corollary}[theorem]{Corollary}
\newtheorem{Lemma}[theorem]{Lemma}
\newtheorem{proposition}[theorem]{Proposition}
\theoremstyle{definition}
\newtheorem{definition}[theorem]{Definition}
\newtheorem{algorithm}[theorem]{Algorithm}
\newtheorem{remark}[theorem]{Remark}

\numberwithin{equation}{section} \numberwithin{figure}{section}

\DeclareMathOperator{\dist}{dist}

\newcommand{\grads}{\nabla^{s}}
\newcommand{\vPsi}{{\mathbf{\Psi}}}

\newcommand{\czeta}{\boldsymbol{\zeta}}
\newcommand{\clambda}{\boldsymbol{\lambda}}
\newcommand{\R}{\mathbb{R}}
\newcommand{\Z}{\mathbb{Z}}
\newcommand{\cz}{\mathbf{z}}
\newcommand{\vPi}{\mathbf{\Pi}}

\newcommand{\Bh}{\mathbf{B}_h}
\newcommand{\Xsp}{\widetilde X^{s,p}(\Omega)}
\newcommand{\Xspd}{X^{-s,p'}(\Omega)}
\newcommand{\Oext}{\Omega_{\mathrm{ext}}}
\newcommand{\Xnorm}[1]{\|#1\|_{\widetilde X^{s,p}(\Omega)}}

\newcommand{\feu}{\hat{u}}

\title[Finite element approximation  for the Bessel $(p,s)$-Laplacian]{Finite element approximation of the Dirichlet problem for the Bessel $(p,s)$-Laplacian}

\author[J. P.~Borthagaray]{Juan Pablo Borthagaray}
\address[J. P.~Borthagaray]{PEDECIBA -- Program for the Development of Basic Sciences and Instituto de Matematica y Estad\'istica ``Rafael Laguardia'', Universidad de la Rep\'ublica, Montevideo, Uruguay.}
\email{jpborthagaray@fing.edu.uy}

\author[J. C. Rueda Ni\~no]{Jos\'e Camilo Rueda Ni\~no}
\address[J. C. Rueda Ni\~no]{PEDECIBA -- Program for the Development of Basic Sciences and Instituto de Matematica y Estad\'istica ``Rafael Laguardia'', Universidad de la Rep\'ublica, Montevideo, Uruguay.}
\email{jcrueda@fing.edu.uy}

\begin{document}
\begin{abstract}
	We study the finite element approximation of the Dirichlet problem for the
	Bessel $(p,s)$-Laplacian, $\operatorname{div}^{s} \left(|\grads u|^{p-2}\grads u\right)$, built
	from the Riesz fractional gradient $\grads$ and posed on the Bessel potential
	space $X^{s,p}$, obtained by complex interpolation between $L^p$ and $W^{1,p}$. For $p = 2$ this operator reduces to the fractional Laplacian, while for $p \neq 2$ it differs
	from the fractional $p$-Laplacian arising from real interpolation. Since a
	direct Galerkin discretization requires reassembling a dense stiffness
	matrix at every nonlinear iteration, we propose an augmented Lagrangian
	formulation based on the projected fractional gradient $B_h = \vPi_h \grads$,
	in which the only dense matrix is assembled once and the nonlinearity
	decouples elementwise. We analyze the resulting consistency error,
	establish a priori convergence rates in $X^{s,p}$, conditional on a discrete inf-sup condition for the augmented Lagrangian scheme, and present numerical
	experiments that illustrate the convergence of the method and its ability to deal with degenerate and strongly nonlinear problems.
\end{abstract}

% 	\begin{abstract}
		% We study the numerical treatment of the fractional $p$-Laplacian operator ($p \in (1,\infty)$) that arises naturally from the complex interpolation between Sobolev spaces. For $p=2$, the operator reduces to the classical fractional Laplacian; however, for $p \neq 2$, it differs from the $(p,s)$-Laplacian obtained through real interpolation between Sobolev spaces. We develop a decomposition-coordination method based on an augmented Lagrangian formulation to compute solutions and their associated fractional gradients. The proposed approach is complemented by a convergence analysis and a discussion of its practical implementation. 
	% \end{abstract}

	\maketitle
	
	% keywords can be removed
	%\keywords{First keyword \and Second keyword \and More}

%%%%%%%%%%%%%%%%%
%%%%%%%%%%%%%%%%%
	\section{Introduction}
%%%%%%%%%%%%%%%%%
%%%%%%%%%%%%%%%%%
	
	The study of partial differential equations involving fractional-order operators has garnered significant attention in recent decades, driven by their ability to describe a wide range of nonlocal phenomena. Such operators arise naturally in diverse contexts including phase transitions \cite{wang2002metastability}, neural networks \cite{zhang2004existence}, population dynamics in biology \cite{carrillo2005spatial}, and elasticity theory \cite{bellido2020fractional,silling2000reformulation}, where classical local models fail to capture effects such as fracture propagation or detachment. Additional relevant applications include obstacle-type problems \cite{campos2023fractional,campos2024implicit}, fractional transport dynamics \cite{azevedo2024nonlocal}, machine learning and data analysis \cite{lu2024nonparametric}, and image processing \cite{gilboa2007nonlocal}, highlighting both the breadth and practical impact of fractional models.
	
    Within this broad framework, different notions of nonlocal gradient have been developed in order to describe interactions beyond the classical local setting. In particular, operators associated with general radial kernels and finite interaction horizons have been introduced in \cite{bellido2025nonlocal}, together with related Laplacian-type constructions in \cite{bellido2026nonlocal}. These developments provide alternative classes of nonlocal gradients, partly motivated by the fractional setting. In the present work, however, we focus on the Riesz fractional gradient, which is naturally connected with the fractional Laplacian. This structure provides a convenient framework for constructing nonlinear fractional operators of \(p\)-Laplacian type. Recent contributions have also explored numerical formulations based on these fractional gradients; for instance, a mixed and stabilized finite element treatment of a fractional Poisson problem using fractional calculus identities is developed in \cite{BDL25}, and a related three-field local discontinuous Galerkin method is proposed and studied in \cite{han2025local}.
    % where well-posedness, coercivity, and convergence of the discretization are established. 
    In the context of fractional \(p\)-Laplacian-type operators defined through the Riesz fractional gradient and incorporating a spatially variable diffusivity, the existence theory has been recently advanced by García-Sáez in \cite{garcia2025fractional}. There, a new class of weighted fractional Sobolev spaces \(X^{s,p}_{0,w}(\Omega)\), defined for Muckenhoupt weights \(w\), is introduced as an extension of the Lions-Calder\'on framework to the fractional setting, leading to existence results for degenerate fractional elliptic equations with variable diffusivity.

	In this paper, we are concerned with the numerical study of the Dirichlet problem associated with the 
    % Riesz fractional
    Bessel  \((p,s)\)-Laplacian in a domain \(\Omega \subset \mathbb{R}^{d}\). More precisely, for \(p \in (1, \infty)\), \(s \in (0,1)\), 
    and a given function \(f \colon \Omega \to \R \), the problem is to find \(u\) such that
	\[
	\left\{
	\begin{aligned}
		\label{Problema dirichlet}
		-\Delta^s_{p} u &= f & \text{ in } \Omega, \\
		u &= 0 & \text{ in } \Omega^{c},
	\end{aligned}
	\right.
	\]
	where the operator \(\Delta^{s}_{p}\) is defined as \(\Delta^{s}_{p} u := \operatorname{div}^{s}\left(|\nabla^{s} u|^{p-2} \nabla^{s} u\right)\) and $\operatorname{div}^{s}$ and $\nabla^{s}$ are given by Definition \ref{def:frac-gradient} below. 
    We consider weak solutions to this problem. The associated variational space is \(\widetilde{X}^{s,p}(\Omega)\), defined as the set of functions whose Riesz fractional gradient belongs to $L^p(\mathbb{R}^d)$ and that vanish outside \(\Omega\).
    More precisely, we seek
    \(u \in \widetilde{X}^{s,p}(\Omega)\) such that
	\begin{equation}
		\label{eq: weak-cont-problem}
		\int_{\mathbb{R}^{d}} |\nabla^{s} u|^{p-2} \, \nabla^{s}u \cdot \nabla^{s} v \, dx = \int_{\Omega} f v \, dx \qquad \forall v \in \widetilde{X}^{s,p}(\Omega).
	\end{equation}
   A standard argument allows one to show that, if 
 \(f \in X^{-s,p'}(\Omega) := [\widetilde{X}^{s,p}(\Omega)]'\), where \(p'\) is the Hölder conjugate of \(p\), this problem admits a unique solution. Furthermore, the variational formulation \eqref{eq: weak-cont-problem} is equivalent to the following minimization problem: find \(u \in \widetilde{X}^{s,p}(\Omega)\) such that
	\begin{equation}
		\label{Implementación: Problema de minimización equivalente}
		J(u) \leqslant J(v) \qquad \forall v \in \widetilde{X}^{s,p}(\Omega),
	\end{equation}
	where the energy functional is $J \colon \widetilde{X}^{s,p}(\Omega) \to \R$,
	\begin{equation} \label{eq:functional}
	J(v) = \frac{1}{p} \int_{\mathbb{R}^{d}} |\nabla^{s}v|^{p} \, dx - \langle f, v \rangle.
	\end{equation}
Here, we have introduced the notation $\langle \cdot, \cdot \rangle$ to denote the duality pairing between $X^{-s,p'}(\Omega)$ and $\widetilde{X}^{s,p}(\Omega)$.
	
	The main objective of the present work is to develop and analyze a numerical scheme for approximating the solution of the minimization problem~\eqref{Implementación: Problema de minimización equivalente}. To this end, we adopt the iterative strategy introduced by Glowinski and Marroco in~\cite{glowinski1975approximation} for nonlinear variational problems. The practical implementation of the scheme, particularly regarding the discretization and assembly associated with the Riesz fractional gradient operator, is inspired by the finite element techniques proposed  in~\cite{acosta2017short} for the homogeneous Dirichlet problem involving the fractional Laplacian. In addition, our error analysis relies on the regularity results established in~\cite{BDPR25+reg}. 
	
	The remainder of this paper is organized as follows. Section~\ref{sec:Preliminares} introduces the variational setting for the Bessel \((p,s)\)-Laplacian and collects the functional framework used throughout the paper. Section~\ref{sec:FE} presents a standard finite element setting, including the finite-dimensional Galerkin problem, the required interpolation estimates, and the resulting convergence rates. 
    However, a direct implementation of the resulting method is prohibitively expensive: the stiffness matrix is
dense, must be reassembled at every nonlinear iteration, and each of its entries involves an integral over $\mathbb{R}^d$. For this reason, in Section~\ref{sec:AugmentedFormulation}, we develop an augmented Lagrangian approach based on a projected discrete problem, in which the Riesz fractional gradients are 
replaced by their elementwise average. 
We first analyze this projected formulation and then derive its augmented Lagrangian representation, emphasizing the resulting computational advantages. We subsequently introduce an iterative method for solving the associated discrete system and extend the approach to a family of problems with variable diffusivity.  
Section~\ref{sec:convergence} is devoted to the convergence analysis of the projected discrete formulation. We introduce and estimate the corresponding consistency residual, derive a Strang-type estimate, and establish convergence rates.
Finally, Section~\ref{sec:NumericalExp} presents numerical experiments illustrating the performance of the proposed method in different nonlinear regimes and geometrical configurations.

%%%%%%%%%%%%%%%%%
%%%%%%%%%%%%%%%%%
	\section{The Bessel \texorpdfstring{$(p,s)$}{(p,s)}-Laplacian: Variational Setting} \label{sec:Preliminares}
%%%%%%%%%%%%%%%%%
%%%%%%%%%%%%%%%%%
	
	In this section, we introduce the Riesz fractional gradient and its associated divergence operator. These definitions naturally lead to a variational framework grounded in Lions–Calderón spaces, which arise in the complex interpolation between integer-order Sobolev spaces. We then establish the equivalence between a Dirichlet problem involving the Bessel \( (p,s) \)-Laplacian and a convex minimization problem, and we discuss the Besov regularity of weak solutions.

%%%%%%%%%%%%%%%%%
	\subsection{Fractional calculus operators and Lions--Calderón spaces}
%%%%%%%%%%%%%%%%%	
The Bessel potential (or Lions--Calderón) spaces and the fractional Sobolev spaces provide two distinct frameworks for constructing intermediate spaces between \(L^p(\R^d)\) and \(W^{1,p}(\R^d)\). Fractional Sobolev spaces have been extensively studied; see, for instance, reference \cite{di2012hitchhiker}. 
The formulation of the fractional gradient and the associated variational spaces originates from the work of Shieh and Spector \cite{shieh2015new}, who introduced the Riesz fractional gradient as a distributional operator. It is worth noting, however, that this operator had already appeared in earlier work of Horváth \cite{horvath1959some} in the context of composition properties of the Riesz potential. 
Further properties of the Lions--Calderón spaces and of the fractional gradient were later developed by Comi and Stefani in \cite{comi2019distributional}. 
In what follows, we present several characterizations of the Riesz fractional gradient.

	\begin{definition} \label{def:frac-gradient}
		Let \(s \in (0,1)\), \(v \in C_{c}^{\infty}(\mathbb{R}^{d})\), and \(\Phi \in C_{c}^{\infty}(\mathbb{R}^{d}; \mathbb{R}^{d})\). The \emph{Riesz fractional gradient} of \( v \) of order \( s \) is defined as
		\begin{equation} \label{eq:def-grads}
			\nabla^s v(x) \coloneq \mu(d, s) \int_{\mathbb{R}^{d}} \frac{v(x) - v(y)}{|y - x|^{d + s }}\, \frac{x - y}{|x-y|} \, dy
		\end{equation}
		and the corresponding \emph{Riesz fractional divergence} of \( \Phi  \) of order \( s \) is given by
		\[
		\operatorname{div}^{s} \Phi(x) \coloneq \mu(d, s) \int_{\mathbb{R}^{d}} \frac{\Phi(x) - \Phi(y)}{|x - y|^{d + s}} \cdot \frac{x - y}{|x-y|} \, dy,
		\]
		where the constant appearing in both definitions is 
		\[
		\mu(d, s) \coloneq \frac{2^{s} \Gamma\left(\frac{d + s + 1}{2}\right)}{\pi^{d / 2} \Gamma\left(\frac{1 - s}{2}\right)}.
		\]
	\end{definition}
	
	We note that \(\mu(d,s)\) satisfies the scaling inequality
	\begin{equation} \label{eq:scaling-mu}
		0 < C_1(d) \le \frac{\mu(d,s)}{1 - s} \le C_2(d),
	\end{equation}
	for some positive constants \(C_1(d)\) and \(C_2(d)\) depending only on the spatial dimension.
	
In particular, when \( s \to 1^{-} \) we recover the local case: indeed, \( \nabla^{s} v \to \nabla v \) for every \( v \in W^{1,p}(\mathbb{R}^{d}) \); see \cite[Theorem~4.11]{comi2023distributional}.  
On the other hand, when \( s=0 \) we formally have, for each \( i=1,\dots,d \), \( [\nabla^{0} v]_i = -\mathcal{R}_i v(x), \) where \( \mathcal{R}_i \) denotes the \( i \)-th Riesz transform; cf. \cite[Lemma~5.8]{Giovanni2022}. The following proposition characterizes the Riesz fractional gradient in terms of Riesz potentials.

    \begin{proposition}
    	\label{prop-grad-riesz-from}
    	 Let \( s \in (0,1) \). If $\varphi \in \mbox{Lip}_c(\mathbb{R}^d)$, then
    	\[
    	\nabla^{s} \varphi (x) = I_{1-s} \circ \nabla \varphi (x) =   \frac{1}{\gamma(d, 1-s)} \int_{\mathbb{R}^d} \frac{\nabla \varphi(y)}{|x-y|^{d+s-1}} \, dy, 
    	\] 
    \end{proposition}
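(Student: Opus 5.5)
Our plan is to start from the Riesz potential side and integrate by parts until the defining kernel in \eqref{eq:def-grads} appears. We take $I_{\alpha} g(x) = \gamma(d,\alpha)^{-1}\int_{\R^d} g(y)|x-y|^{-(d-\alpha)}\,dy$ with $\alpha = 1-s$, so the exponent is $d-\alpha = d+s-1$. For $\varphi \in \mathrm{Lip}_c(\R^d)$, $\nabla\varphi \in L^\infty$ has compact support. The kernel $|x-y|^{-(d+s-1)}$ is locally integrable because $d+s-1<d$. Hence the right-hand side is well defined for every $x$, and the second equality in the statement is just the definition of $I_{1-s}$. The real content is the first equality.

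\textbf{Step 1 (excise a ball and integrate by parts).} Fix $x$ and $\varepsilon>0$, and write
\[
\int_{|y-x|>\varepsilon}\frac{\nabla\varphi(y)}{|x-y|^{d+s-1}}\,dy = \int_{|y-x|>\varepsilon}\frac{\nabla_y(\varphi(y)-\varphi(x))}{|x-y|^{d+s-1}}\,dy .
\]
Subtracting the constant $\varphi(x)$ is harmless because it does not change the gradient. On $\{|y-x|>\varepsilon\}\cap B_R$ with $R$ large we apply the divergence theorem, which is valid for Lipschitz functions times smooth functions. We use
\[
\nabla_y |x-y|^{-(d+s-1)} = (d+s-1)\frac{x-y}{|x-y|^{d+s+1}} .
\]
This produces the volume term
\[
-(d+s-1)\int_{|y-x|>\varepsilon}\bigl(\varphi(y)-\varphi(x)\bigr)\frac{x-y}{|x-y|^{d+s+1}}\,dy = (d+s-1)\int_{|y-x|>\varepsilon}\frac{\varphi(x)-\varphi(y)}{|x-y|^{d+s}}\frac{x-y}{|x-y|}\,dy,
\]
together with two boundary terms.

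\textbf{Step 2 (boundary terms).} On $\partial B_R$ the integrand is $\varphi(x)\,|x-y|^{-(d+s-1)}\nu$. Its integral is $O(R^{d-1}R^{-(d+s-1)}) = O(R^{-s})\to0$, so this term vanishes as $R\to\infty$. On the inner sphere $|y-x|=\varepsilon$, Lipschitz continuity gives $|\varphi(y)-\varphi(x)|\le L\varepsilon$. The term is therefore bounded by $L\varepsilon\cdot\varepsilon^{-(d+s-1)}\cdot C\varepsilon^{d-1} = CL\varepsilon^{1-s}\to0$.

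\textbf{Step 3 (limits).} As $\varepsilon\to0$ the left-hand side converges to the full integral by local integrability. The right-hand side is absolutely convergent, again by Lipschitz continuity: near $x$ the integrand is $O(|x-y|^{1-d-s})$, and at infinity it is $O(|y|^{-d-s})$. Dominated convergence then yields
\[
I_{1-s}\nabla\varphi(x) = \frac{d+s-1}{\gamma(d,1-s)}\int_{\R^d}\frac{\varphi(x)-\varphi(y)}{|x-y|^{d+s}}\frac{x-y}{|x-y|}\,dy .
\]
It remains to identify the constant, $\mu(d,s) = (d+s-1)/\gamma(d,1-s)$. This is a Gamma-function computation using the standard normalization $\gamma(d,\alpha) = \pi^{d/2}2^{\alpha}\Gamma(\alpha/2)/\Gamma((d-\alpha)/2)$. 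With $\alpha=1-s$,
\[
\frac{d+s-1}{\gamma(d,1-s)} = \frac{(d+s-1)\,\Gamma\bigl(\tfrac{d+s-1}{2}\bigr)}{\pi^{d/2}2^{1-s}\Gamma\bigl(\tfrac{1-s}{2}\bigr)} = \frac{2^{s}\Gamma\bigl(\tfrac{d+s+1}{2}\bigr)}{\pi^{d/2}\Gamma\bigl(\tfrac{1-s}{2}\bigr)} .
\]
The last equality uses $z\Gamma(z)=\Gamma(z+1)$ with $z=(d+s-1)/2$, and it matches $\mu(d,s)$ exactly.

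The main obstacle is justifying the integration by parts for merely Lipschitz $\varphi$ near the singularity, together with keeping the normalization of $\gamma$ consistent. We handle the former by the $\varepsilon$-excision above, since $\varphi\in W^{1,\infty}$ makes the divergence theorem valid on the annular region. If needed, one can alternatively mollify $\varphi$ and pass to the limit, using uniform convergence of $\nabla\varphi_\delta$ in $L^1_{\mathrm{loc}}$ and of the difference quotients.
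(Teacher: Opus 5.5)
Your proof is correct. The paper gives no argument of its own here: it simply cites \cite[Proposition~2.2]{comi2019distributional}. Your excision argument is a self-contained version of the standard real-variable computation. You remove $B_\varepsilon(x)$, apply Gauss--Green on the annulus to the Lipschitz function $(\varphi(y)-\varphi(x))\,|x-y|^{-(d+s-1)}$, and the two boundary terms vanish at rates $O(\varepsilon^{1-s})$ and $O(R^{-s})$.

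Your route also makes explicit three things the paper leaves implicit:
\begin{itemize}
\item Definition~\ref{def:frac-gradient} is stated only for $C_c^\infty$ functions. You interpret $\nabla^s\varphi$ for $\varphi\in\mathrm{Lip}_c(\R^d)$ through the same integral formula, and you show that this integral converges absolutely.
\item The identity holds at every $x$, not just almost everywhere.
\item The paper never writes down the normalization $\gamma(d,\alpha)$. You fix it and check it against $\mu(d,s)$ using $(d+s-1)\Gamma(\tfrac{d+s-1}{2})=2\Gamma(\tfrac{d+s+1}{2})$. This gives exactly $\mu(d,s)=(d+s-1)/\gamma(d,1-s)$, which is what makes the displayed formula consistent with the definition of $\nabla^s$.
\end{itemize}

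A Fourier-side argument is the other natural route; the paper uses it implicitly in the appendix, where it writes $\nabla^s=I_{1-s}\partial_x$ as a multiplier identity. That route is shorter, but it identifies the two sides only as tempered distributions, hence almost everywhere. It also needs the Fourier transform of the Riesz kernel to match constants, and a continuity argument to recover the pointwise statement. Your argument avoids both, using nothing beyond Lipschitz continuity.
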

	
The proof of this proposition can be found in \cite[Proposition~2.2]{comi2019distributional}. Moreover, Lemma 2.5 in the same work establishes the following integration-by-parts formula.
	
	\begin{proposition}
		\label{Dualidad Gradiente y divergencia}
		Let \(s \in (0,1)\), \(\varphi \in C_{c}^{\infty}(\mathbb{R}^{d})\), and \(\Phi \in C_{c}^{\infty}(\mathbb{R}^{d}; \mathbb{R}^{d})\). Then,
		\[
		\int_{\mathbb{R}^{d}} \varphi(x) \operatorname{div}^{s} \Phi(x)\, dx = - \int_{\mathbb{R}^{d}} \Phi(x) \cdot \nabla^{s} \varphi(x)\, dx.
		\]
	\end{proposition}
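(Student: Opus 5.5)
The plan is to prove the identity by writing both sides as absolutely convergent double integrals over $\mathbb{R}^d\times\mathbb{R}^d$ and comparing them after a change of variables. The obstacle is that the integrand $\varphi(x)\,(\Phi(x)-\Phi(y))\cdot\frac{x-y}{|x-y|^{d+s+1}}$ is not absolutely integrable on $\mathbb{R}^{2d}$: near the diagonal it behaves like $|x-y|^{-d-s+1}$, which is fine since $s<1$, but for fixed $x$ and $|y|\to\infty$ the term $\Phi(x)/|x-y|^{d+s}$ produces a kernel that does not decay in $x$ jointly. I would therefore insert a cutoff. For $\varepsilon>0$, define the truncated operators $\nabla^s_\varepsilon$ and $\operatorname{div}^s_\varepsilon$ by restricting the integrals in Definition~\ref{def:frac-gradient} to $|x-y|>\varepsilon$.

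\textbf{Step 1 (removing the cutoff).} I will show that $\nabla^s_\varepsilon\varphi\to\nabla^s\varphi$ and $\operatorname{div}^s_\varepsilon\Phi\to\operatorname{div}^s\Phi$ pointwise as $\varepsilon\to0$. I will also show that these functions are dominated by $C(1+|x|)^{-d-s}$ uniformly in $\varepsilon$. Near the singularity I use a first-order Taylor bound, $|\varphi(x)-\varphi(y)|\le \|\nabla\varphi\|_\infty|x-y|$, which gives integrability of $|x-y|^{1-d-s}$ over the unit ball. Away from it, the kernel is bounded by $|x-y|^{-d-s}$ times $2\|\varphi\|_\infty$. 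For $x$ far from the support $K$ of $\varphi$, only $y\in K$ contributes, which gives the decay $|x|^{-d-s}$. Since the other factor in each pairing is compactly supported, dominated convergence allows passage to the limit in both $\int\varphi\,\operatorname{div}^s_\varepsilon\Phi$ and $\int\Phi\cdot\nabla^s_\varepsilon\varphi$.

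\textbf{Step 2 (the identity at fixed $\varepsilon$).} With $|x-y|>\varepsilon$, the kernel $k(x,y)=\frac{x-y}{|x-y|^{d+s+1}}$ is bounded by $\varepsilon^{-d-s}$ and decays like $|x-y|^{-d-s}$, which is integrable at infinity. I split
\[
\int\varphi\,\operatorname{div}^s_\varepsilon\Phi = \mu\iint_{|x-y|>\varepsilon}\varphi(x)\Phi(x)\cdot k\,dy\,dx - \mu\iint_{|x-y|>\varepsilon}\varphi(x)\Phi(y)\cdot k\,dy\,dx .
\]
The first term vanishes because $\int_{|x-y|>\varepsilon}k(x,y)\,dy=0$ by oddness, and this integral converges absolutely. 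The second term is absolutely convergent on $\mathbb{R}^{2d}$, since both $\varphi$ and $\Phi$ are compactly supported and $k$ is bounded there, so Fubini applies. Swapping the roles of $x$ and $y$ and using $k(y,x)=-k(x,y)$, it equals
\[
+\mu\iint\Phi(x)\varphi(y)\cdot k(x,y)\,dy\,dx .
\]
Similarly, $-\int\Phi\cdot\nabla^s_\varepsilon\varphi = -\mu\iint\Phi(x)\cdot k\,(\varphi(x)-\varphi(y))$. Here the $\varphi(x)$ term again vanishes by oddness, leaving $+\mu\iint\Phi(x)\varphi(y)\cdot k$. The two expressions coincide.

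Letting $\varepsilon\to0$ via Step 1 concludes the proof. The main care is in Step 1, namely the uniform domination needed to pass to the limit. The conditionally convergent terms at infinity are handled by the oddness argument at fixed $\varepsilon$.
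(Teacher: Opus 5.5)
Your argument is correct. The paper does not prove this proposition; it quotes \cite[Lemma~2.5]{comi2019distributional}. So yours is a self-contained derivation directly from the singular-integral formulas in Definition~\ref{def:frac-gradient}. You truncate at $|x-y|>\varepsilon$ and remove the terms $\varphi(x)\Phi(x)\cdot\int_{|x-y|>\varepsilon}k(x,y)\,dy$ by oddness. You then swap variables in the remaining, absolutely convergent, double integral using $k(y,x)=-k(x,y)$, and remove the cutoff by dominated convergence. You only use boundedness, compact support and a Lipschitz bound, so the same argument gives the identity for $\varphi\in\mathrm{Lip}_c(\mathbb{R}^d)$ and $\Phi\in\mathrm{Lip}_c(\mathbb{R}^d;\mathbb{R}^d)$, which is the generality of the cited lemma. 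A different route, in the spirit of Proposition~\ref{prop-grad-riesz-from}, writes $\nabla^s=I_{1-s}\nabla$ and (analogously) $\operatorname{div}^s=I_{1-s}\operatorname{div}$. The identity then reduces to the symmetry of the Riesz potential kernel plus classical integration by parts. That route is shorter once the potential representations are available; yours needs nothing beyond the definition.

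One correction to your framing. The unsplit integrand $\varphi(x)(\Phi(x)-\Phi(y))\cdot k(x,y)$ is in fact absolutely integrable on $\mathbb{R}^{2d}$. The factor $\varphi(x)$ confines $x$ to a compact set, and for such $x$ the $y$-integral is bounded uniformly: use the Lipschitz bound near the diagonal (fine since $s<1$) and the decay $|x-y|^{-d-s}$ at infinity (fine since $s>0$). The real reason for the cutoff is the diagonal. Once you split $\Phi(x)-\Phi(y)$ (or $\varphi(x)-\varphi(y)$), each piece carries the non-integrable singularity $|x-y|^{-d-s}$ at $y=x$. Without truncation, neither the oddness cancellation nor the Fubini swap would be justified. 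Nothing at infinity is only conditionally convergent; as you note in Step~2, $\int_{|x-y|>\varepsilon}k(x,y)\,dy$ converges absolutely. This misattribution does not affect Steps~1 and~2, which are correct as written.
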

	
	To formulate problems involving the fractional gradient, we work within the Lions–Calderón class \( X^{s,p} \), which serves as the natural variational space associated with the fractional gradient.
	
	\begin{definition}
		\label{Espacios Lions-Calderón}
		Let \(s \in (0,1)\) and \(p \in (1, \infty)\). The \emph{Lions–Calderón space} is defined as
		\[
		X^{s, p}(\mathbb{R}^{d}) \coloneq \left\{ v \in L^{p}(\mathbb{R}^{d}) : \nabla^{s} v \in L^{p}(\mathbb{R}^{d}; \mathbb{R}^{d}) \right\},
		\]
		endowed with the norm
		\[
		\|v\|_{X^{s, p}(\mathbb{R}^{d})} \coloneq \left( \|v\|_{L^{p}(\mathbb{R}^{d})}^{p} + \|\nabla^{s} v\|_{L^{p}(\mathbb{R}^{d}; \mathbb{R}^{d})}^{p} \right)^{1/p}.
		\]
	\end{definition}

\begin{remark}
For $s \in (0,1)$ and $p \in (1,\infty)$ these spaces coincide, with equivalence of norms, with the Bessel potential spaces; see \cite[Theorem 1.7]{shieh2015new}. This identification follows from the fact that $\nabla^s$ is a Fourier multiplier of homogeneity degree $s$, and it is precisely where the restriction $p \in (1,\infty)$ enters. Accordingly, we refer to the operator $\Delta^s_p$ as the {\em Bessel} $(p,s)$-Laplacian, while denoting the spaces $X^{s,p}$ as {\em Lions-Calder\'on} spaces, and preserving
the name \emph{Riesz} for the fractional gradient and divergence of
Definition~\ref{def:frac-gradient}.
\end{remark}
    
	We are interested in the subspace of functions with vanishing exterior trace:
	\begin{equation}
		\label{Regularidad: Espacios lions calderon tilde}
		\widetilde{X}^{s, p}(\Omega) \coloneq \left\{ v \in X^{s, p}(\mathbb{R}^{d}) : v = 0 \text{ in } \Omega^c := \mathbb{R}^{d} \setminus \Omega \right\}.
	\end{equation}
The following Poincaré-type inequality can be found e.g. in \cite[Theorem 2.9]{bellido2021gamma}: there exists a constant $C(d,s,p,\Omega)$ such that, for all $v \in \widetilde{X}^{s, p}(\Omega) $, 
	\begin{equation} \label{eq:Poincare}
	\| v \|_{L^p(\Omega)} \le C(d,s,p,\Omega) \|\nabla^{s} v \|_{L^{p}(\mathbb{R}^{d}; \mathbb{R}^{d})}.
	\end{equation}
	
Thus, \(\widetilde{X}^{s,p}(\Omega)\) becomes a Banach space when equipped with the norm
\begin{equation}
	\label{eq-def-norm-LC-doms}
		\|v\|_{\widetilde{X}^{s, p}(\Omega)} \coloneq \|\nabla^{s} v\|_{L^{p}(\mathbb{R}^{d}; \mathbb{R}^{d})}.
\end{equation}
	Its dual is denoted by
	\[
	X^{-s, p'}(\Omega) \coloneq \left( \widetilde{X}^{s, p}(\Omega) \right)'.
	\]

A closely related scale is the one of the fractional Sobolev spaces, which we denote by $W^{s,p}$ and can be characterized by real interpolation between $L^p$ and $W^{1,p}$. One can also define the zero-extension spaces $\widetilde{W}^{s,p}(\Omega)$ and there holds a Poincar\'e inequality in them.
Furthermore, the relation between the Lions–Calderón and the fractional Sobolev spaces is well-understood. In particular, for fixed differentiability and integrability indices,  they satisfy the  relations (see \cite[Theorem 3.31]{bellido2025bessel}, for example)
\[ \begin{split}
W^{s,p}(\R^d) \subset X^{s,p}(\R^d) &  \quad \mbox{if } 1 < p \le 2, \\
X^{s,p}(\R^d) \subset W^{s,p}(\R^d) &  \quad \mbox{if } 2 \le p, \\
\end{split}\]
and the inclusions are strict unless $p=2$. Furthermore, if one varies the differentiability parameter while keeping the integrability parameter fixed, then one has the following contiguity result \cite[Theorem 3.29]{bellido2025bessel}, that relies on the relationship between real and complex interpolation of Banach spaces.
	\begin{proposition}
		Let \( p \in (1,\infty) \), \( s \in (0,1) \), and \( \varepsilon > 0 \). Then,
		\[
		W^{s+\varepsilon, p}\left(\R^d\right) \subset X^{s, p}\left(\R^d\right) \subset W^{s-\varepsilon, p}\left(\R^d\right).
		\]
	\end{proposition}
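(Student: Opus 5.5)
The plan is to reduce both inclusions to embeddings between Bessel potential spaces $H^{t,p}(\R^d)$ and Besov spaces $B^{t}_{p,p}(\R^d)=W^{t,p}(\R^d)$, using the fact that the scale of Besov spaces in the smoothness index is nested with an $\varepsilon$-loss. First, by \cite[Theorem 1.7]{shieh2015new} (recalled in the Remark above) we have $X^{s,p}(\R^d)=H^{s,p}(\R^d)$ with equivalent norms, where $\|v\|_{H^{s,p}}=\|\mathcal{F}^{-1}((1+|\xi|^2)^{s/2}\hat v)\|_{L^p}$. For non-integer $t\in(0,1)$ we also have $W^{t,p}(\R^d)=B^{t}_{p,p}(\R^d)$. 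It therefore suffices to prove
\[
B^{s+\varepsilon}_{p,p}(\R^d)\subset H^{s,p}(\R^d)\subset B^{s-\varepsilon}_{p,p}(\R^d),
\]
and we may assume $\varepsilon<\min\{s,1-s\}$, since shrinking $\varepsilon$ only strengthens the claim. For $s-\varepsilon\le 0$, we interpret $W^{s-\varepsilon,p}$ as $L^p$ or as the negative-order Besov space, and the argument is identical.

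Both embeddings follow from the Littlewood--Paley characterization. Let $\{\varphi_j\}_{j\ge0}$ be a dyadic partition of unity and set $\Delta_j v=\mathcal{F}^{-1}(\varphi_j\hat v)$. Then $\|v\|_{H^{s,p}}\simeq\|(\sum_j 2^{2js}|\Delta_j v|^2)^{1/2}\|_{L^p}$, which is the Triebel--Lizorkin norm of $F^s_{p,2}$, while $\|v\|_{B^t_{p,p}}=(\sum_j 2^{jtp}\|\Delta_j v\|_{L^p}^p)^{1/p}$.

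For the right inclusion, each block satisfies $\|\Delta_j v\|_{L^p}\le \|\sup_k 2^{ks}|\Delta_k v|\|_{L^p}\,2^{-js}\lesssim \|v\|_{H^{s,p}}2^{-js}$, because pointwise $\sup_k a_k\le(\sum_k a_k^2)^{1/2}$. Hence $\sum_j 2^{j(s-\varepsilon)p}\|\Delta_j v\|_{L^p}^p\lesssim\|v\|_{H^{s,p}}^p\sum_j 2^{-j\varepsilon p}<\infty$.

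For the left inclusion, use the triangle inequality $\|(\sum_j|g_j|^2)^{1/2}\|_{L^p}\le\sum_j\|g_j\|_{L^p}$ with $g_j=2^{js}\Delta_j v$. Then apply H\"older in $j$:
\[
\sum_j 2^{js}\|\Delta_jv\|_{L^p}\le\Big(\sum_j2^{-j\varepsilon p'}\Big)^{1/p'}\Big(\sum_j2^{j(s+\varepsilon)p}\|\Delta_jv\|_{L^p}^p\Big)^{1/p}.
\]
This gives $\|v\|_{H^{s,p}}\lesssim\|v\|_{B^{s+\varepsilon}_{p,p}}$.

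An alternative route, matching the hint in the statement about interpolation, writes $X^{s,p}=[L^p,W^{1,p}]_s$ (complex interpolation) and $W^{s,p}=(L^p,W^{1,p})_{s,p}$ (real interpolation). One then combines the reiteration theorem with the standard inclusions $(A_0,A_1)_{\theta,1}\subset[A_0,A_1]_\theta\subset(A_0,A_1)_{\theta,\infty}$ and $(A_0,A_1)_{\theta+\varepsilon,p}\subset(A_0,A_1)_{\theta,1}$ for an ordered couple $A_1\hookrightarrow A_0$.

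The main obstacle is the Littlewood--Paley square-function characterization of $H^{s,p}$, which relies on the Mikhlin multiplier theorem and requires $1<p<\infty$. I would cite this characterization (e.g.\ Triebel) rather than reprove it. Everything after that is elementary summation of geometric series.
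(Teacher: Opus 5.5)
Your argument is correct, and it differs from what the paper does. The paper does not prove this proposition. It cites \cite[Theorem 3.29]{bellido2025bessel}, whose proof compares real and complex interpolation of $(L^p,W^{1,p})$; that is essentially your ``alternative route''. In that route the reiteration theorem is not actually needed, since for this ordered couple
\[
(L^p,W^{1,p})_{s+\varepsilon,p}\subset(L^p,W^{1,p})_{s,1}\subset[L^p,W^{1,p}]_s\subset(L^p,W^{1,p})_{s,\infty}\subset(L^p,W^{1,p})_{s-\varepsilon,p}
\]
already gives both inclusions. The only proof the paper supplies is Lemma~\ref{lem:contiguity}. It treats just the left inclusion, for functions supported in a bounded $\Omega$, working directly from the kernel formula for $\nabla^s$: a near/far splitting at $B_{2R}(0)$, H\"older against $|x-y|^{-(d-\varepsilon p')}$ to produce the Gagliardo seminorm of order $s+\varepsilon$, and Poincar\'e for the tail terms.

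Your main argument instead identifies $X^{s,p}=H^{s,p}=F^s_{p,2}$ and $W^{t,p}=B^t_{p,p}$. It then proves $B^{s+\varepsilon}_{p,p}\subset B^s_{p,1}\subset F^s_{p,2}\subset B^s_{p,\infty}\subset B^{s-\varepsilon}_{p,p}$ by elementary summation. This is sound once the square-function characterization (valid for $1<p<\infty$) is granted. Your reduction to $\varepsilon<\min\{s,1-s\}$ is legitimate by monotonicity of the $W^{t,p}$ scale.

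Your route makes the $\varepsilon$-dependence visible. The H\"older step costs $(\sum_j2^{-j\varepsilon p'})^{1/p'}\simeq\varepsilon^{-(1-1/p)}$, which is precisely the blow-up in Lemma~\ref{lem:contiguity}. This requires the $W^{s+\varepsilon,p}$--$B^{s+\varepsilon}_{p,p}$ equivalence constants to stay bounded as $\varepsilon\to0$, which is the case because $s+\varepsilon$ remains in a compact subset of $(0,1)$. The right inclusion costs $\varepsilon^{-1/p}$.

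The paper's kernel-based proof, by contrast, needs no Fourier analysis. It gives the bound directly in terms of Gagliardo double integrals, which is exactly the form that is then localized element by element in Section~\ref{sec:interpolation}.
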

	
	Naturally, in view of the Poincar\'e inequalities, the same result applies to the spaces of functions supported in $\Omega$. Since we are interested in a quantitative estimate of the embedding constant, we provide a proof of one of the inclusions.

		\begin{Lemma}
		\label{lem:contiguity}
		Let \( \Omega \subset \mathbb{R}^{d} \) be a bounded domain, \( s \in (0,1) \), \( \varepsilon \in (0,1-s) \), and \( 1 < p < \infty \). Then, for every \( u \in \widetilde{W}^{s+\varepsilon,p}(\Omega) \), the following estimate holds:
		\[
		\| u \|_{\widetilde{X}^{s,p}(\Omega)} \le 
        \frac{C(d,s, p, \Omega)}{\varepsilon^{1-\frac1p}}
        % C(d,s,p, \Omega) \, \varepsilon^{\frac{1-p}{p}} 
        \| u \|_{\widetilde{W}^{s+\varepsilon,p}(\Omega)}.
		\]
	\end{Lemma}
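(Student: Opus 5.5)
The plan is to bound $\nabla^s u$ pointwise directly from the integral formula \eqref{eq:def-grads}, splitting near and far interactions, and then estimate each piece by a Gagliardo-type seminorm of order $s+\varepsilon$. For $x\in\R^d$ I write
\[
|\nabla^s u(x)| \le \mu(d,s)\int_{|h|<1}\frac{|u(x+h)-u(x)|}{|h|^{d+s}}\,dh + \mu(d,s)\int_{|h|\ge 1}\frac{|u(x+h)-u(x)|}{|h|^{d+s}}\,dh =: \mu(d,s)\,\big(A(x)+B(x)\big).
\]
For the near-field term $A$ I will apply H\"older in $h$ after splitting the kernel as $|h|^{-d-s} = |h|^{-(d/p+s+\varepsilon)}\cdot|h|^{-(d/p'-\varepsilon)}$:
\[
A(x)\le \Big(\int_{|h|<1}\frac{|u(x+h)-u(x)|^p}{|h|^{d+(s+\varepsilon)p}}\,dh\Big)^{1/p}\Big(\int_{|h|<1}|h|^{-d+\varepsilon p'}\,dh\Big)^{1/p'}.
\]
The second factor equals $\big(|\mathbb{S}^{d-1}|/(\varepsilon p')\big)^{1/p'}$, which is exactly the source of the $\varepsilon^{-(1-1/p)}=\varepsilon^{-1/p'}$ blow-up in the statement. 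Raising to the power $p$ and integrating in $x$ then gives $\|A\|_{L^p(\R^d)}\le C(d,p)\,\varepsilon^{-1/p'}\,|u|_{W^{s+\varepsilon,p}(\R^d)}$.

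For the far-field term $B$ no $\varepsilon$ is needed. By Minkowski's integral inequality,
\[
\|B\|_{L^p}\le \int_{|h|\ge1}\frac{\|u(\cdot+h)-u\|_{L^p}}{|h|^{d+s}}\,dh\le 2\|u\|_{L^p}\int_{|h|\ge1}|h|^{-d-s}\,dh = \frac{2|\mathbb{S}^{d-1}|}{s}\,\|u\|_{L^p(\Omega)}.
\]
The Poincar\'e inequality in $\widetilde W^{s+\varepsilon,p}(\Omega)$ then bounds $\|u\|_{L^p(\Omega)}$ by the seminorm. Its constant may depend on $s+\varepsilon$, so I take care to make it uniform in $\varepsilon\in(0,1-s)$. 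Alternatively, and more simply, I would use the elementary bound $|u|_{W^{s,p}}\lesssim |u|_{W^{s+\varepsilon,p}}+\|u\|_{L^p}$, or state the result with the full $\widetilde W^{s+\varepsilon,p}$ norm including the $L^p$ part. In either case the constant depends only on $(d,s,p,\Omega)$.

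The main obstacle is the bookkeeping needed so that the constants stay uniform in $\varepsilon$. The only delicate point is the Poincar\'e constant for $\widetilde W^{s+\varepsilon,p}(\Omega)$, where the orders lie in $[s,1)$. I would handle it by comparing with the order-$s$ seminorm: split the Gagliardo integral over $|x-y|<1$, where $|x-y|^{-sp}\le|x-y|^{-(s+\varepsilon)p}$, and over $|x-y|\ge1$, where one uses $\|u\|_{L^p}$. I would then invoke Poincar\'e at the fixed order $s$, or avoid the issue entirely by using the full norm. Finally, $\mu(d,s)\le C_2(d)$ by \eqref{eq:scaling-mu}, so the prefactor is harmless, and the density of smooth compactly supported functions justifies applying the pointwise formula.
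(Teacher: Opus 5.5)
Your proof uses the same key step as the paper: H\"older's inequality after factoring the kernel as $|x-y|^{-(d/p+s+\varepsilon)}|x-y|^{-(d/p'-\varepsilon)}$, so that $\varepsilon^{-1/p'}$ comes from $\int|h|^{-d+\varepsilon p'}\,dh$. The decomposition around that step is different.

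The paper splits in space. With $\Lambda=B_{2R}(0)\supset\Omega$, it treats $x\in\Lambda$ by splitting the $y$-integral into two parts:
\begin{enumerate}
\item $y\in\Lambda$, which is the H\"older step, with $\int_\Lambda|x-y|^{-d+\varepsilon p'}dy\lesssim R^{\varepsilon p'}/\varepsilon$;
\item $y\in\Lambda^c$, where $|x-y|\ge R$.
\end{enumerate}
It then treats $x\in\Lambda^c$ via the decay $|\nabla^s u(x)|\lesssim|x|^{-d-s}\|u\|_{L^1(\Omega)}$.

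You split in the increment, $|h|<1$ versus $|h|\ge1$, and dispose of the far field with Minkowski's integral inequality. This is shorter and needs no decay estimate. It also gives the global bound
\[
\|\nabla^s u\|_{L^p(\R^d)}\le\mu(d,s)\Big[\Big(\frac{|\mathbb{S}^{d-1}|}{\varepsilon p'}\Big)^{1/p'}|u|_{W^{s+\varepsilon,p}(\R^d)}+\frac{2|\mathbb{S}^{d-1}|}{s}\|u\|_{L^p(\R^d)}\Big]
\]
for all $u\in W^{s+\varepsilon,p}(\R^d)$. The support condition enters only at the end, whereas the paper uses it from the outset.

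The step that fails as you wrote it is the uniform Poincar\'e constant. If $\|\cdot\|_{\widetilde W^{s+\varepsilon,p}(\Omega)}$ contains the $L^p$ part, nothing is needed. If it is the Gagliardo seminorm, as the paper's appeal to Poincar\'e suggests, your proposed fix is circular. Splitting the order-$s$ seminorm at $|x-y|=1$ and applying Poincar\'e at order $s$ gives
\[
\|u\|_{L^p}^p\le C_P^p|u|^p_{W^{s+\varepsilon,p}}+C_P^p C s^{-1}\|u\|^p_{L^p},
\]
and nothing guarantees $C_P^pCs^{-1}<1$. The bound $|u|_{W^{s,p}}\lesssim|u|_{W^{s+\varepsilon,p}}+\|u\|_{L^p}$ has the same defect.

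There are two easy repairs.
\begin{enumerate}
\item Split at a large radius $\rho=\rho(d,s,p,\Omega)\ge1$. For $|x-y|<\rho$ you have $|x-y|^{-sp}\le\rho^{(1-s)p}|x-y|^{-(s+\varepsilon)p}$, and the far part carries a factor $\rho^{-sp}$ that can be made small enough to absorb.
\item Argue directly. If $u=0$ outside $\Omega\subset B_R(0)$, then $x\in\Omega$ and $|x-y|\ge3R$ force $y\notin\Omega$. Hence, for every $t\in(0,1)$,
\[
|u|^p_{W^{t,p}(\R^d)}\ge2\int_\Omega|u(x)|^p\int_{|y-x|\ge3R}\frac{dy}{|x-y|^{d+tp}}\,dx=\frac{2|\mathbb{S}^{d-1}|}{tp\,(3R)^{tp}}\|u\|^p_{L^p(\Omega)}.
\]
The constant here is at least $2|\mathbb{S}^{d-1}|/(p\max\{1,3R\}^p)$, uniformly in $t$.
\end{enumerate}

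The paper itself only cites ``the Poincar\'e inequality in $\widetilde W^{s,p}(\Omega)\supset\widetilde W^{s+\varepsilon,p}(\Omega)$'' and does not address uniformity in $\varepsilon$. With either repair, your argument is more complete than the published one on this point.
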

\begin{proof}
Let \( u \in \widetilde{W}^{s+\varepsilon,p}(\Omega) \).
Without loss of generality, we assume $0 \in \Omega$ and let $R>1$ be such that $\Omega \subset B_R(0)$. We fix the auxiliary domain $\Lambda := B_{2R}(0)$.
From \eqref{eq-def-norm-LC-doms} we know that
\[
\|u\|_{\widetilde{X}^{s, p}(\Omega)}^p  =
\| \grads u \|_{L^p(\Lambda; \R^d)}^p+ \| \grads u \|_{L^p(\Lambda^c; \R^d)}^p.
\]
We analyze these two terms separately.

\medskip

\noindent \textbf{Estimate of \( \| \grads u \|_{L^p(\Lambda; \R^d)} \).}
We have $\| \grads u \|_{L^p(\Lambda; \R^d)}^p\le 2^{p-1} \mu(d,s)^p \left[I_1 + I_2\right],$
where
\begin{align*}
	I_1&=
	\int_{\Lambda}\left|\int_{\Lambda}\frac{(u(x)-u(y))(x-y)}{|x-y|^{d+s+1}}\,dy\right|^{p}dx,\\
	I_2&=
	\int_{\Lambda}\left|\int_{\Lambda^{c}}\frac{(u(x)-u(y))(x-y)}{|x-y|^{d+s+1}}\,dy\right|^{p}dx .
\end{align*}
Let us first estimate $I_2$, involving the integral over $\Lambda^c$. Since $u$ is supported in $\Omega$ and
$\Omega\subset B_R(0)\subset\Lambda$, we have $u(y)=0$ for $y\in\Lambda^c$,
while $u(x)=0$ for $x\in\Lambda\setminus\Omega$; therefore
\[
  I_2=\int_\Omega|u(x)|^p\left|\int_{\Lambda^c}
      \frac{x-y}{|x-y|^{d+s+1}}\,dy\right|^p dx
  \ \le\ \int_\Omega|u(x)|^p
      \left(\int_{\Lambda^c}\frac{dy}{|x-y|^{d+s}}\right)^{p}dx .
\]
The choice $\Lambda=B_{2R}(0)$ yields, for every $x\in\Omega$ and $y\in\Lambda^c$,
\[
  |x-y|\ \ge\ |y|-|x|\ \ge\ 2R-R\ =\ R ,
\]
so that the inner integral is bounded uniformly in $x$:
\[
  \int_{\Lambda^c}\frac{dy}{|x-y|^{d+s}}
  \ \le\ \int_{|z|\ge R}\frac{dz}{|z|^{d+s}}
  \ =\ \omega_{d-1}\int_R^{+\infty}\rho^{-1-s}\,d\rho
  \ =\ \frac{\omega_{d-1}}{s\,R^{s}} .
\]
Consequently,
\begin{equation}\label{eq:I2-bound}
  I_2\ \le\ \Big(\frac{\omega_{d-1}}{s\,R^{s}}\Big)^{p}\,\|u\|_{L^p(\Omega)}^p .
\end{equation}
% Let us first estimate the term $I_2$, involving the integral over \(\Omega^c\). 
% Since $u$ is supported in $\Omega$, we have
% \begin{align*}
% 	I_2 \le \int_{\Omega}|u(x)|^{p}\left(\int_{\Omega^{c}}	\frac{1}{|x-y|^{d+s}}\,dy\right)^{p}dx .
% \end{align*}

% Let \( d(x,\partial\Omega) \) denote the distance from \(x \in \Omega \) to  \(\partial\Omega\) and $\omega_{d-1}$ the measure of the unit sphere in $\mathbb{R}^d$. 
% The inner integral above can be estimated as
% \[
% \int_{\Omega^{c}} \frac{1}{|x-y|^{d+s}}\,dy \le
% \omega_{d-1}\int_{d(x,\partial\Omega)}^{+\infty}\rho^{-1-s}\,d\rho=\frac{\omega_{d-1}}{s\, d(x,\partial\Omega)^s},
% \]
% and we obtain the bound
% \begin{align*}
% 	I_2\le
% 	\left(\frac{\omega_{d-1}}{s}\right)^p\int_{\Omega}\frac{|u(x)|^{p}}{d(x,\partial\Omega)^{sp}}\,dx .
% \end{align*}

% To control this term we invoke the fractional Hardy inequality (see \cite{Dyda2004} and \cite[Theorem 1.4.4.4]{grisvard1985elliptic}). In order to avoid the critical case  \(sp=1\) 
% in such inequality, we use the hypothesis \(u \in \widetilde{W}^{s+\varepsilon,p}(\Omega)\) and, without loss of generality, assume that $(s+\varepsilon) p \neq 1$. This yields
% \[
% I_2\leq
% C(d,s,p,\Omega)\,\|u\|_{\widetilde{W}^{s+\varepsilon,p}(\Omega)}^{p}.
% \]

Let us now estimate the term $I_1$, corresponding to the inner integral in \(\Lambda\).
Using Hölder's inequality, we have
\begin{align*}
	I_1&\le
	\int_{\Lambda}\left(\int_{\Lambda}
	\frac{|u(x)-u(y)|^{p}}{|x-y|^{d+(s+\varepsilon)p}}\,dy\right)
	\left(\int_{\Lambda}\frac{dy}{|x-y|^{d-\varepsilon p'}}\right)^{\frac{p}{p'}}dx.
\end{align*}

The integral independent of $u$ is uniformly bounded. Indeed, for all $x \in \Lambda$, we note that $\Lambda \subset B_{4R}(x)$ and thus
\[
\int_{\Lambda}\frac{dy}{|x-y|^{d-\varepsilon p'}}
\le \omega_{d-1} \int_0^{4R} \rho^{-1+\varepsilon p'} \, d\rho \le
C(d,p)\frac{R^{\varepsilon p'}}{\varepsilon},
\]
which leads to
\[
I_1\le
\frac{C(d,p, \Omega)}{\varepsilon^{p-1}}\int_{\Lambda}\int_{\Lambda}\frac{|u(x)-u(y)|^{p}}{|x-y|^{d+(s+\varepsilon)p}}\,dy\,dx 
\le \frac{C(d,p, \Omega)}{\varepsilon^{p-1}}\|u\|_{\widetilde{W}^{s+\varepsilon,p}(\Omega)}^{p}.
\]

% Consequently,
% \begin{equation}
% 	I_1\le
% 	C(d,p, \Omega)\,\varepsilon^{-(p-1)}\|u\|_{\widetilde{W}^{s+\varepsilon,p}(\Omega)}^{p}.
% \end{equation}

\noindent \textbf{Estimate of \( \| \grads u \|_{L^p(\Lambda^c)} \).}
Since \(u\) is supported in $\Omega \subset \Lambda$, we have
\begin{align*}
	\| \grads u \|^p_{L^p(\Lambda^c)}
	&=\mu(d,s)^p
	\int_{\Lambda^c}\left|\int_{\Omega}\frac{u(y)(x-y)}{|x-y|^{d+s+1}}\,dy
	\right|^{p}dx.
\end{align*}

By our choice of $\Lambda$, for all $x\in\Lambda^c, \; y\in\Omega$ it holds that
\[
|x| \le |x-y| + |y| \le |x-y| + R \le |x-y| + \frac12 |x| \ \Rightarrow \ |x-y| \ge \frac12 |x|.
\]
Using this, we obtain
\[
\left|
\int_{\Omega}\frac{u(y)(x-y)}{|x-y|^{d+s+1}}\,dy \right|
\le
\frac{C(d,s)}{|x|^{d+s}}\int_{\Omega}|u(y)|\,dy \quad \forall x \in \Lambda^c
\]
and, consequently,
\begin{align*}
	\| \grads u \|^p_{L^p(\Lambda^c)}
	&\le
	C(d,s)\left(\int_{\Lambda^c}\frac{1}{|x|^{(d+s)p}}\, dx \right)\left(\int_{\Omega}|u(y)|\,dy\right)^p \le C(d,s,\Omega)\|u\|_{L^p(\Omega)}^p .
\end{align*}
%Therefore 
%\[
%\| \grads u \|^p_{L^p(\Lambda^c)}
%\le
%C(d,s,{\rm diam}(\Omega))\|u\|_{L^p(\Omega)}^p .
%\]

Finally, using the Poincar\'e inequality in $\widetilde{W}^{s,p}(\Omega) \supset \widetilde{W}^{s+\varepsilon,p}(\Omega)$ and collecting the bounds we obtained, we conclude that
\[
\|u\|_{\widetilde{X}^{s, p}(\Omega)}^p
\le
\frac{C(d,s,p, \Omega)}{\varepsilon^{p-1}} \|u\|_{\widetilde{W}^{s+\varepsilon,p}(\Omega)}^{p}.
\]
\end{proof}

%%%%%%%%%%%%%%%%%
\subsection{Dirichlet problem}	
%%%%%%%%%%%%%%%%%
We now present the problem that will be the main focus of our study: the Dirichlet problem for the Bessel \( (p,s) \)-Laplacian. Although both the Bessel and the  fractional \( p \)-Laplacians arise from interpolation theory, they reflect fundamentally different constructions. The former operator is derived from complex interpolation of the spaces $L^p(\R^d)$ and $W^{1,p}(\R^d)$, and naturally leads to a formulation based on fractional gradients. In contrast, the fractional \( p \)-Laplacian of order $s$,
\[
\mathcal{L}_{s,p} u (x) := \int_{\mathbb{R}^d} \frac{|u(x)-u(y)|^{p-2} (u(x) - u(y))}{|x-y|^{d+sp}} \, dy,
\]
is associated with the first variation of the Gagliardo seminorms that are, in turn, associated with real interpolation between the same spaces. We refer to \cite{borthagaray2024quasi} for an analysis and finite element approximation of the Dirichlet problem for that operator.
	
	The problem we shall deal with in this paper reads as follows. Let \(\Omega \subset \mathbb{R}^d\) be a bounded Lipschitz domain, and fix \(p \in (1, \infty)\), \(s \in (0,1)\). Given  \(f \in X^{-s, p'}(\Omega)\), we  seek a function $u$ such that
	\begin{equation} \label{eq:Dirichlet-problem}
	\left\{
	\begin{aligned}
		-\operatorname{div}^s\left(\left|\nabla^s u\right|^{p-2} \nabla^s u\right) &= f && \text{in } \Omega, \\
		u &= 0 && \text{in } \Omega^c.
	\end{aligned}
	\right.
	\end{equation}
	
	To formulate this problem in a weak sense, we apply the integration by parts identity in Proposition~\ref{Dualidad Gradiente y divergencia}. This yields the variational formulation: find \(u \in \widetilde{X}^{s, p}(\Omega)\) such that
	\begin{equation} 		\label{Problema variacional Plaplaciano}
	\left\langle \left| \nabla^s u \right|^{p-2} \nabla^s u, \nabla^s v \right\rangle = \langle f, v \rangle \quad \text{for all } v \in \widetilde{X}^{s, p}(\Omega).
	\end{equation}
	
	As commented above, this problem admits a variational structure: the associated energy functional is given by \eqref{eq:functional}, \(J \colon \widetilde{X}^{s,p}(\Omega) \to \R\),
	\begin{equation}
	J(w) := \frac{1}{p} \| w \|_{  \widetilde{X}^{s,p}(\Omega)}^p -  \langle f, w \rangle =
	\frac{1}{p} \int_{\mathbb{R}^d} \left|\nabla^s w\right|^p \, dx - \langle f, w \rangle.
	\end{equation}
Clearly, $J$ is strictly convex and weakly lower semicontinuous, ensuring the existence and uniqueness of a minimizer --which, as usual, we call a weak solution to \eqref{eq:Dirichlet-problem}-- via the direct method in the calculus of variations.

Furthermore, if we test equation \eqref{Problema variacional Plaplaciano} with $v = u$, we immediately obtain the stability bound
\begin{equation} \label{eq:stability}
\| u \|_{ \widetilde{X}^{s,p}(\Omega)} \le \| f \|_{X^{-s,p'}(\Omega)}^{\frac1{p-1}}.
\end{equation}

We require quantitative stability estimates for the solution of the problem. For that purpose, we discuss continuity and ellipticity-type estimates for the nonlinear operator associated with the weak formulation, $\mathcal{A} \colon \widetilde{X}^{s,p}(\Omega) \to X^{-s,p'}(\Omega)$,
	\begin{equation}\label{eq:def-A}
	\langle \mathcal{A}u,v \rangle := \int_{\mathbb{R}^{d}} |\nabla^{s} u|^{p-2} \nabla^{s}u  \cdot \nabla^{s} v \, dx.
	\end{equation}
The following properties can be shown by adapting the corresponding results for the local case (see  \cite[Section~5]{glowinski1975approximation} or \cite[Section~4]{chow1989finite}).

	\begin{proposition}
		\label{prop:elipticity-continuity}
		There exist constants \( \alpha, \gamma > 0 \) such that, for all \( u, v \in  \widetilde{X}^{s,p}(\Omega) \), there hold
{ \begin{equation} \label{eq:elipticity}
%\alpha \|u - v\|_{\widetilde{X}^{s,p}(\Omega)}^p \leq
%\begin{cases}
%\left(\|u\|_{\widetilde{X}^{s,p}(\Omega)} + \|v\|_{\widetilde{X}^{s,p}(\Omega)}\right)^{2-p} \langle A u - A v, u - v \rangle, & \text{if } 1 < p \leq 2, \\[4pt]
%\langle A u - A v, u - v \rangle, & \text{if } 2 \leq p < \infty,
%		\end{cases}
\langle \mathcal{A} u - \mathcal{A} v, u - v \rangle \ge \left\lbrace
\begin{aligned}
& \alpha \|u - v\|_{\widetilde{X}^{s,p}(\Omega)}^2
\left(\|u\|_{\widetilde{X}^{s,p}(\Omega)} + \|v\|_{\widetilde{X}^{s,p}(\Omega)}\right)^{p-2}, & p \in (1,2], \\ %\text{if } 1 < p \leq 2, \\
& \alpha \|u - v\|_{\widetilde{X}^{s,p}(\Omega)}^p, & p \in [2,\infty), % \text{if } 2 \leq p < \infty,
	\end{aligned} \right.
\end{equation}}
and
{	\begin{equation}\label{eq:continuity}
		\|\mathcal{A} u - \mathcal{A} v\|_{X^{-s,p'}(\Omega)} \leq 
		\left\lbrace
\begin{aligned}
			& \gamma\|u - v\|_{\widetilde{X}^{s,p}(\Omega)}^{p-1}, & p \in (1,2], \\
		&	\gamma\|u - v\|_{\widetilde{X}^{s,p}(\Omega)}(\|u\|_{\widetilde{X}^{s,p}(\Omega)} + \|v\|_{\widetilde{X}^{s,p}(\Omega)})^{p-2}, & p \in [2,\infty).
		\end{aligned} \right.
		\end{equation}
        }
	\end{proposition}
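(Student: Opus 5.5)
The plan is to reduce everything to pointwise vector inequalities for the map $a(\xi) := |\xi|^{p-2}\xi$ on $\R^d$, and then integrate over $\R^d$ with $\xi = \grads u(x)$, $\eta = \grads v(x)$. Since $\|w\|_{\widetilde{X}^{s,p}(\Omega)} = \|\grads w\|_{L^p(\R^d;\R^d)}$ by \eqref{eq-def-norm-LC-doms}, and $\grads$ is linear, the nonlocal structure disappears entirely. The argument is then exactly the local one from \cite{glowinski1975approximation,chow1989finite}, with $\nabla$ replaced by $\grads$.

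The pointwise inequalities I will invoke are the standard ones. There is $c_p>0$ with
\[
(a(\xi)-a(\eta))\cdot(\xi-\eta) \ge c_p \,|\xi-\eta|^2(|\xi|+|\eta|)^{p-2}
\]
for all $p\in(1,\infty)$, and $(a(\xi)-a(\eta))\cdot(\xi-\eta)\ge c_p|\xi-\eta|^p$ for $p\ge2$. For continuity, $|a(\xi)-a(\eta)|\le C_p|\xi-\eta|^{p-1}$ for $p\le 2$, and $|a(\xi)-a(\eta)|\le C_p|\xi-\eta|(|\xi|+|\eta|)^{p-2}$ for $p\ge2$. All of these follow by writing
\[
a(\xi)-a(\eta)=\int_0^1 Da(\eta+t(\xi-\eta))(\xi-\eta)\,dt
\]
and using $|Da(z)|\simeq|z|^{p-2}$ together with $\int_0^1|\eta+t(\xi-\eta)|^{p-2}dt\simeq(|\xi|+|\eta|)^{p-2}$.

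For \eqref{eq:continuity}, I will estimate $\langle \mathcal{A}u-\mathcal{A}v,w\rangle$ by Hölder's inequality with exponents $p',p$, which gives $\|a(\grads u)-a(\grads v)\|_{L^{p'}}\|w\|_{\widetilde{X}^{s,p}(\Omega)}$.
\begin{itemize}
\item If $p\le2$, then $\|\,|\grads(u-v)|^{p-1}\|_{L^{p'}}=\|u-v\|^{p-1}$, and this case is done.
\item If $p\ge2$, I apply the generalized Hölder inequality with exponents $\frac1{p'}=\frac1p+\frac{p-2}{p}$. This gives $\||\grads(u-v)|(|\grads u|+|\grads v|)^{p-2}\|_{L^{p'}}\le\|u-v\|\,(\|u\|+\|v\|)^{p-2}$, after the triangle inequality in $L^p$.
\end{itemize}

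For \eqref{eq:elipticity} with $p\ge2$, I simply integrate the pointwise bound. The case $p\in(1,2]$ is the main obstacle, because the integrand weight $(|\xi|+|\eta|)^{p-2}$ is a negative power and cannot be pulled out directly. I will first handle the reverse Hölder step. Write
\[
|\xi-\eta|^p=\bigl[|\xi-\eta|^2(|\xi|+|\eta|)^{p-2}\bigr]^{p/2}(|\xi|+|\eta|)^{(2-p)p/2}.
\]
Hölder's inequality with exponents $2/p$ and $2/(2-p)$ then gives
\[
\|u-v\|^p\le\Bigl(\int|\xi-\eta|^2(|\xi|+|\eta|)^{p-2}\Bigr)^{p/2}\bigl\||\grads u|+|\grads v|\bigr\|_{L^p}^{(2-p)p/2}.
\]
Raising to the power $2/p$ and using the pointwise monotonicity gives $\|u-v\|^2\le c_p^{-1}\langle\mathcal{A}u-\mathcal{A}v,u-v\rangle(\|u\|+\|v\|)^{2-p}$, which is the claim. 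The only remaining care is the degenerate set where $\xi=\eta=0$; there the integrand is interpreted as $0$, and the claim is trivial when $u=v=0$.
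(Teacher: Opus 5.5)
Your proposal is correct and follows essentially the route the paper takes: the paper gives no separate proof, only a pointer to the local arguments of Glowinski--Marroco and Chow, and those arguments are exactly the pointwise inequalities for $\xi\mapsto|\xi|^{p-2}\xi$ combined with generalized H\"older inequalities (with exponents $2/p$ and $2/(2-p)$ in the subquadratic monotonicity step), transplanted to this setting through the identity $\|w\|_{\widetilde{X}^{s,p}(\Omega)}=\|\grads w\|_{L^p(\R^d;\R^d)}$. Your H\"older exponents and powers check out in every case, including the generalized H\"older step for $p\ge 2$ continuity and the treatment of the degenerate set where $\grads u=\grads v=0$.
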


Finally, for the a priori error analysis of our discretization, we require some regularity estimates for weak solutions, that were derived in \cite[Theorems 4.1 and 4.2]{BDPR25+reg}. These are expressed in the Besov scale, but they can be combined with the contiguity properties between this scale and the Sobolev and Lions-Calder\'on scales to prove a priori convergence rates. Following \cite{BDPR25+reg}, for $\sigma>0$ we denote by $\dot{B}_{p, q}^{\sigma}(\Omega)$ the space of functions in $\dot{B}_{p, q}^{\sigma}(\R^d)$ supported in $\overline{\Omega}$.

\begin{theorem} \label{thm:regularity}
Let \(\Omega\) be a bounded Lipschitz domain, \(s \in (0,1)\), \( p \in (1,\infty)\),  and \(u \in \widetilde{X}^{s,p}(\Omega)\) be a weak solution to \eqref{eq:Dirichlet-problem}. 

If \(p \geq 2\) and  \(f \in B_{p^{\prime},1}^{\max\{0,-s+\frac{1}{p^\prime}\}}(\Omega)\), then
\[ \begin{aligned}
& \|u\|_{\dot{B}_{p, \infty}^{\,s + \frac{s}{p-1}}(\Omega)} \le C  \|f\|_{B^{0}_{p',1}(\Omega)}^{\frac1{p-1}} & \quad \mbox{ if } s \le \frac{p-1}{p}, \\
& \|u\|_{\dot{B}_{p, \infty}^{s+\frac{1}{p}}(\Omega)} \le C \|f\|_{B_{p^{\prime}, 1}^{-s+\frac{1}{p^{\prime}}}(\Omega)}^{\frac1{p-1}} & \quad \mbox{ if } s  \ge \frac{p-1}{p}.
\end{aligned}\] 

In contrast, if \( 1 < p \le 2\) and  \(f \in B_{p^{\prime},1}^{\max\{0,-s+\frac{1}{2}\}}(\Omega)\),
then
\[ \begin{aligned}
& \|u\|_{\dot{B}_{p, \infty}^{\,2s }(\Omega)}^p \le C \|f\|_{B^{0}_{p',1}(\Omega)}^{p'}
 & \quad \mbox{ if } s \le \frac12, \\
& \|u\|_{\dot{B}_{p, \infty}^{s+\frac12}(\Omega)} \le C \|f\|_{X^{-s,p^{\prime}}(\Omega)}^{\frac{2-p}{p-1}}\|f\|_{B_{p^{\prime}, 1}^{-s+\frac{1}{2}}(\Omega)} & \quad \mbox{ if } s  \ge \frac12.
\end{aligned}\] 			
Above, all hidden constants only depend on \( d,s,p, \) and \( \Omega. \)
\end{theorem}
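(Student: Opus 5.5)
This theorem is quoted from \cite[Theorems 4.1 and 4.2]{BDPR25+reg}. If it had to be re-derived, I would use the Nirenberg difference-quotient (translation) technique adapted to the nonlocal gradient. Extend $u$ by zero. For a small vector $h$, the translate $u(\cdot+h)$ is no longer admissible near $\partial\Omega$. So I would localize: use a cutoff or a tangential translation in local Lipschitz charts, so that $u_h := u(\cdot + h)$ minus $u$, suitably truncated, is an admissible test function. The key structural fact is that $\nabla^s$ commutes with translations, since it is a Fourier multiplier. Hence $\nabla^s(u(\cdot+h)-u) = (\nabla^s u)(\cdot+h)-\nabla^s u$. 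Testing \eqref{Problema variacional Plaplaciano} at $x$ and at $x+h$ and subtracting, the monotonicity in Proposition~\ref{prop:elipticity-continuity}, used pointwise in the form of vector inequalities for $|a|^{p-2}a$, bounds
\[
\int |\tau_h \nabla^s u|^p \quad (p\ge 2), \qquad \text{or} \qquad \int |\tau_h\nabla^s u|^2(|\nabla^s u|+|\nabla^s u(\cdot+h)|)^{p-2} \quad (p\le 2),
\]
by $\langle f, \tau_{-h}\tau_h u\rangle$ plus boundary and commutator terms coming from the cutoff.

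The right-hand side is estimated by duality of Besov spaces. Writing $\langle f,\tau_{-h}\tau_h u\rangle$ and using $f\in B^{\sigma}_{p',1}$ against differences of $u$ measured in $B^{-\sigma}_{p,\infty}$-type norms gives a factor $|h|^{\theta}$ times a power of the $\dot B^{s+\theta}_{p,\infty}$ norm of $u$. The left-hand side controls $|h|^{p\,\theta'}$-type quantities, which translate into Besov regularity of $\nabla^s u$. Via Proposition~\ref{prop-grad-riesz-from}, this lifts to $u\in \dot B^{s+\cdot}_{p,\infty}$. A bootstrap/absorption argument, using Young's inequality with exponents $p$ and $p'$, then yields the exponent $s+\frac{s}{p-1}$ for $p\ge2$, and $2s$ for $p\le2$; the latter uses the Hölder trick to pass from the weighted $L^2$ quantity to $L^p$. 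Those rates come from interior-type shifts, where $f\in B^0_{p',1}$ suffices.

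The cap at $s+\frac1p$ (respectively $s+\frac12$) reflects the boundary. Near $\partial\Omega$, translations leaving $\Omega$ create terms of size $|h|^{1/p}$ from the behaviour of $u$ in a strip of width $|h|$. That is why $f$ needs the extra smoothness $-s+\frac1{p'}$ (respectively $-s+\frac12$), and why the estimate for $p\le2$ carries the prefactor $\|f\|^{\frac{2-p}{p-1}}_{X^{-s,p'}}$, which comes from \eqref{eq:stability} bounding the degenerate weight.

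I expect the main obstacle to be the boundary localization. The operator $\nabla^s$ is nonlocal, so cutoffs do not commute with it. The commutator $[\nabla^s,\eta]$ must be bounded, presumably as an operator of order $s-1<0$ mapping $L^p\to L^p$. I would first try the strip estimate, $\|\tau_h u\|_{L^p}$ controlled near $\partial\Omega$ by the fractional Hardy inequality, and treat the nonlocal tails as in the proof of Lemma~\ref{lem:contiguity}.
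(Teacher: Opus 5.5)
The paper gives no proof of this statement either. It imports the result from \cite[Theorems 4.1 and 4.2]{BDPR25+reg} and uses it as a black box, so your opening sentence matches the paper's treatment exactly and is all that is required here.

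If you intend the re-derivation as an actual argument, note that its boundary step does not close along the lines you suggest. You correctly flag this step as the main obstacle.

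\begin{itemize}
\item \textbf{The two-sided test fails.} Subtracting the weak formulation at $x$ and at $x+h$ requires $u(\cdot+h)-u$ to be admissible in two problems. For \eqref{Problema variacional Plaplaciano} it must be supported in $\overline\Omega$; for the translated problem it must be supported in $\overline\Omega-h$. Near $\partial\Omega$ these two requirements force $\Omega-h=\Omega$ locally. That holds at best for tangential $h$ along flat pieces of the boundary, never for an open set of directions.
\item \textbf{Cutoffs and charts do not help.} A cutoff does not change the support obstruction. Flattening a Lipschitz boundary to create tangential translations destroys exactly the structure you rely on: $\nabla^s$ commutes with genuine translations, but not with bi-Lipschitz changes of variables. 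Such maps also fail to preserve Besov smoothness above $1$, which is the relevant range when $s\ge\frac{p-1}{p}$ (resp.\ $s\ge\frac12$).
\item \textbf{Hardy's inequality is not the missing tool.} A fractional Hardy strip estimate does not address the difficulty, because the obstruction is admissibility, not the size of a boundary term.
\end{itemize}

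The standard fix is Savar\'e's one-sided translation technique for Lipschitz domains, which runs as follows.

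\begin{enumerate}
\item Use genuine translations $h$ in a local open cone of directions with $\Omega-h\subset\Omega$ locally. Such cones exist by the uniform cone property. After a partition of unity, the localized translate of $u$ is still supported in $\overline\Omega$.
\item Test \eqref{Problema variacional Plaplaciano} only once, with $v=u(\cdot+h)-u$.
\item Combine this with the pointwise strong convexity
$\frac1p|\mathbf b|^p\ge\frac1p|\mathbf a|^p+|\mathbf a|^{p-2}\mathbf a\cdot(\mathbf b-\mathbf a)+c_p|\mathbf b-\mathbf a|^p$ for $p\ge2$ (with the weighted quadratic version for $p<2$). Also use the translation invariance of $\int_{\R^d}|\nabla^s\cdot|^p$.
\item Up to localization errors, this gives $\|\nabla^s(u(\cdot+h)-u)\|_{L^p}^p\lesssim|\langle f,u-u(\cdot+h)\rangle|$. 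This pairs $f$ with a first difference, not with your $\tau_{-h}\tau_hu$.
\item Control of translations in an open cone of directions suffices for Besov regularity.
\end{enumerate}

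Your duality and bootstrap heuristics, which do reproduce the exponents $s+\frac{s}{p-1}$, $2s$, $s+\frac1p$ and $s+\frac12$, then run on this quantity.

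One smaller correction: in the capped regimes $-s+\frac1{p'}\le0$ (resp.\ $-s+\frac12\le0$). There the estimate uses a norm of $f$ weaker than $B^0_{p',1}$, so it is not "extra smoothness" of $f$ that the boundary demands.
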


%%%%%%%%%%%%%%%%%
%%%%%%%%%%%%%%%%%
\section{Finite element approximation} \label{sec:FE}
%%%%%%%%%%%%%%%%%
%%%%%%%%%%%%%%%%%
We next describe the finite element discretization of \eqref{eq:Dirichlet-problem}, understood as the minimization of the restriction of the energy $J$ in \eqref{eq:functional} to suitable finite-dimensional spaces, and discuss the approximation properties of the resulting scheme.

%%%%%%%%%%%%%%%%%
\subsection{Setting}
%%%%%%%%%%%%%%%%%
From this point on, we assume $\Omega$ is a polytope. Let \(\mathcal{T}_h\) be a conforming, simplicial mesh on \(\overline{\Omega}\), where elements \(T \in \mathcal{T}_h\) are open and with the maximum element diameter bounded by \(h\). We assume \(\mathcal{T}_h\) is shape-regular, i.e.,
	\[
	\max_{T \in \mathcal{T}_h} \frac{h_{T}}{\rho_{T}} \leq \gamma_{0},
	\]
	where \(h_{T} = \operatorname{diam}(T)\) and \(\rho_{T}\) is the diameter of the largest inscribed ball in \(T\).
We approximate problem \eqref{eq:Dirichlet-problem} with linear Lagrange elements, in which the finite-dimensional space \(V_h\) is defined as
\[
V_h = \left\{ v_h \in C^0(\overline{\Omega}) : v_h|_{\partial\Omega} = 0, \, v_h|_{T} \in P_1(T) \text{ for all } T \in \mathcal{T}_h \right\},
\]
and \(P_1(T)\) denotes the space of affine polynomials on \(T\). It is clear that \(V_h \subset \widetilde{W}^{1,p}(\Omega) \subset \widetilde{X}^{s,p}(\Omega)\) for all $s \in (0,1)$, $p \in (1,\infty)$. We denote by \(N\) the number of interior nodes of \( \mathcal{T}_{h} \) and by \(\{\varphi_i\}_{i=1}^N\) the associated basis functions of the finite-dimensional space.

Using this space, we consider the discrete counterpart to problem \eqref{Problema variacional Plaplaciano}: find \(\feu_h \in V_h\) such that
	\begin{equation} \label{eq:discrete-problem}
	\int_{\mathbb{R}^{d}} \left|\nabla^{s} \feu_h\right|^{p-2} \nabla^{s} \feu_h \cdot \nabla^{s} v_h \, dx = \langle f, v_h \rangle \quad \forall v_h \in V_h.
	\end{equation}
Since a function solves this problem if and only if it is the minimizer of the restriction of the convex functional  \eqref{eq:functional} over $V_h$, existence and uniqueness to the discrete problem follow immediately. In addition, in the same fashion as \eqref{eq:stability}, we have a discrete stability estimate of the form
\begin{equation} \label{eq:discrete-stability}
\| \feu_h \|_{ \widetilde{X}^{s,p}(\Omega)} \le \| f \|_{X^{-s,p'}(\Omega)}^{\frac1{p-1}}.
\end{equation}

At this point, we shall not discuss practical aspects of the implementation of the method. Instead, we assume we are capable of computing solutions to \eqref{eq:discrete-problem} and discuss how well they approximate the solution to \eqref{Problema variacional Plaplaciano}.

For the analysis of the finite element approximations, as well as for the practical implementation of the method we pursue below, we need to consider the meshing of an auxiliary exterior domain. More precisely,  let \(\Omega_{\mathrm{ext}}\) be a computational domain such that \(\Omega \Subset \Omega_{\mathrm{ext}}\) and $d(\partial\Omega, \partial \Omega_{\mathrm{ext}}) \ge h$, and let \(\widetilde{\mathcal{T}}_h\) be a triangulation of \(\Omega_{\mathrm{ext}}\) satisfying the same regularity assumptions as \(\mathcal{T}_h\), and coinciding with \(\mathcal{T}_h\) on \(\Omega\). 

We define the patch of elements associated with a subset \( A \), denoted by \( S_A \),  as
\begin{equation} \label{eq:def-patches}
S_A := \left\{\, T \in \widetilde{\mathcal{T}}_h \;:\; \overline{T} \cap \overline{A} \neq \emptyset \,\right\}.
\end{equation}
We emphasize that the definition above implies that the patch of an element $T \in \mathcal{T}_h$ is the collection of its neighboring elements on the auxiliary mesh $\widetilde{\mathcal{T}}_h$.

%%%%%%%%%%%%%%%%%
	\subsection{Interpolation and error bounds} \label{sec:interpolation}
%%%%%%%%%%%%%%%%%
	Interpolation operators are essential tools for analyzing finite element approximation errors. Typically, these operators are constructed and their approximation capabilities are analyzed in a local fashion. The nonlocality of the Riesz gradient obstructs this strategy: the restriction of $\grads v$ to an element depends on $v$ throughout $\R^d$, so that the error admits no decomposition into independent local contributions.
    For this reason, we rely on the quantitative contiguity estimate from Lemma \ref{lem:contiguity} and localization of fractional Sobolev seminorms, following  the approach introduced in \cite{faermann2002localization} and further developed in e.g. \cite[Lemma 4.1]{borthagaray2023constructive}. More precisely, we have
   \[ \begin{split}
\left\| v \right\|_{\widetilde{X}^{s,p}(\Omega)}^p & \le C \varepsilon^{1-p} \left\|v \right\|_{\widetilde{W}^{s+\varepsilon,p}(\Omega)}^p \\ & \le C \varepsilon^{1-p} \sum_{T \in \mathcal{T}_h} \left[\int_T \int_{S_T} \dfrac{|v(x)-v(y)|^p}{|x-y|^{d+(s+\varepsilon)p}} + \frac1{h_T^{(s+\varepsilon)p}} \| v\|_{L^p(T)}^p \right].
 \end{split}\]	
Here, we note that the summation takes place over elements $T \subset \Omega$ but the integrals may involve a one-element layer on $\Omega^c$, as indicated by the notation  \eqref{eq:def-patches}.
	
Additionally, in the context of nonlocal problems, solutions may not be well-defined pointwise. This motivates the use of interpolation schemes that rely on local integral averages rather than pointwise evaluations. 
Combining the previous localization inequality together with standard local estimates, one obtains suitable bounds for quasi-interpolation operators (cf. \cite[Proposition~4.1]{borthagaray2024quasi}); in particular, these estimates hold for classical constructions such as the Scott-Zhang or the Cl\'ement interpolation operators~\cite{ciarlet2013analysis,clement1975approximation}. At this stage, we fix the notation for our quasi-interpolant as
\[
\pi_h : \widetilde{X}^{s,p}(\Omega) \to V_h,
\]
and state the interpolation estimate we shall employ in this work.

\begin{theorem}
	\label{Cota de interpolacion}
	Let $s\in(0,1)$, $p\in(1,\infty)$ and $t\in(s,2]$. 
	Assume $u\in \widetilde W^{t,p}(\Omega)$. Then, for every $\varepsilon \in (0,\min\{1-s,t-s\})$, it holds
	\[
	\|u-\pi_h u\|_{\widetilde X^{s,p}(\Omega)}
	\leq C \,\varepsilon^{\frac{1-p}{p}}
	\left(\sum_{T\in\mathcal T_h}
	h_T^{p(t-s-\varepsilon)}
	|u|_{W^{t,p}(S_T)}^p
	\right)^{1/p}.
	\]
	In particular, if $u \in \widetilde{W}^{t,p}(\Omega)$, then
	\[
	\|u-\pi_h u\|_{\widetilde X^{s,p}(\Omega)}
	\leq C \,\varepsilon^{\frac{1-p}{p}} h^{t-s-\varepsilon} \| u \|_{\widetilde{W}^{t,p}(\Omega)}.
	\]
\end{theorem}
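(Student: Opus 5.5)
We apply Lemma~\ref{lem:contiguity} to the error $e := u - \pi_h u$, then localize the resulting fractional Sobolev norm and bound each local piece with the standard estimates for $\pi_h$. Since $u \in \widetilde W^{t,p}(\Omega)$ with $t > s+\varepsilon$ and $\pi_h u \in V_h \subset \widetilde W^{1,p}(\Omega)$, we have $e \in \widetilde W^{s+\varepsilon,p}(\Omega)$, because $s+\varepsilon < 1$. Lemma~\ref{lem:contiguity} therefore gives
\[
\|e\|_{\widetilde X^{s,p}(\Omega)}^p \le C \varepsilon^{1-p} \|e\|_{\widetilde W^{s+\varepsilon,p}(\Omega)}^p .
\]
This is exactly the origin of the factor $\varepsilon^{\frac{1-p}{p}}$ after taking $p$-th roots.

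Next, we use the localization inequality displayed at the start of Section~\ref{sec:interpolation}, following \cite{faermann2002localization} and \cite[Lemma 4.1]{borthagaray2023constructive}. It bounds $\|e\|_{\widetilde W^{s+\varepsilon,p}(\Omega)}^p$ by
\[
\sum_{T\in\mathcal T_h}\Big[ \int_T\int_{S_T} \frac{|e(x)-e(y)|^p}{|x-y|^{d+(s+\varepsilon)p}}\,dy\,dx + h_T^{-(s+\varepsilon)p}\|e\|_{L^p(T)}^p \Big].
\]
Elements of $\widetilde{\mathcal T}_h$ lying outside $\Omega$ contribute trivially, since $e = 0$ there. We must check that the constant in this localization is independent of $\varepsilon$. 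This holds because $s+\varepsilon$ stays in the compact interval $[s,1)$, and the Faermann constants degenerate only as the order tends to $0$, which does not occur here. Alternatively, any degeneration as the order tends to $1$ can be absorbed, since $\varepsilon < 1-s$.

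The third step invokes local approximation bounds for the quasi-interpolant (Scott--Zhang or Cl\'ement), as in \cite[Proposition~4.1]{borthagaray2024quasi}. For $r = s+\varepsilon < t \le 2$ these state
\[
|u-\pi_h u|_{W^{r,p}(T\times S_T)} + h_T^{-r}\|u-\pi_h u\|_{L^p(T)} \le C h_T^{t-r}|u|_{W^{t,p}(S_T)},
\]
after enlarging $S_T$ to a patch of patches if needed. Shape regularity keeps that enlargement harmless. These bounds follow from the usual scheme: invariance of $\pi_h$ on $P_1$ on each patch, a Bramble--Hilbert or fractional Poincar\'e argument on the reference patch, and scaling, which produces $h_T^{t-r}$. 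Summing over $T$, the finite overlap of the patches gives the first estimate. The second estimate follows from $h_T \le h$ together with $\sum_T |u|_{W^{t,p}(S_T)}^p \le C\|u\|_{\widetilde W^{t,p}(\Omega)}^p$.

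The main obstacle is the third step in the case $t \in (1,2]$. There the local $W^{t,p}$ seminorm must be interpreted as the Gagliardo seminorm of $\nabla u$ over the patch. The local estimate for the double integral over $T\times S_T$ then requires a fractional Bramble--Hilbert lemma with order above one and uniform constants across patch configurations. We would first try to reduce it to the case $t \le 1$ using an intermediate affine approximation $q_T$ on $S_T$. This approximation satisfies $\|\nabla(u-q_T)\|_{W^{t-1,p}(S_T)} \lesssim |u|_{W^{t,p}(S_T)}$, and since $\pi_h q_T = q_T$ on $T$, we can interpolate between $L^p$ and $W^{1,p}$ bounds to control the $W^{r,p}$ seminorm.
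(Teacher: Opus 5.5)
Your proposal is correct and follows the paper's own argument. The paper gives no separate proof: in the paragraph preceding the statement it chains Lemma~\ref{lem:contiguity} (the source of $\varepsilon^{\frac{1-p}{p}}$), the Faermann-type localization of the $\widetilde W^{s+\varepsilon,p}(\Omega)$ norm, and standard local bounds for Scott--Zhang/Cl\'ement quasi-interpolants as in \cite[Proposition~4.1]{borthagaray2024quasi}, which is exactly your chain. Your patch-of-patches enlargement and your affine-approximant reduction for $t\in(1,2]$ just supply details the paper leaves to the references.

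One small point to tidy: for $t\ge 1$ the constant in your local step must grow like $(1-s-\varepsilon)^{-1/p}$ as $s+\varepsilon\to 1$. Such growth is not absorbed merely because $\varepsilon<1-s$, contrary to your aside. It is nonetheless harmless: run the argument with $\varepsilon/2$ in place of $\varepsilon$ and use $h_T^{\varepsilon/2}\lesssim 1$; moreover, only small $\varepsilon$ is used later.
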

	
	%%%%%%%%%%%%%%%%%
	% \subsection{Error Bounds} 
	%%%%%%%%%%%%%%%%%
Next, we estimate the discrepancy between the exact solution \( u \) of the variational problem \eqref{Implementación: Problema de minimización equivalente} and its discrete counterpart \( u_h \), obtained as the solution of the finite-dimensional problem \eqref{eq:discrete-problem}. Our approach follows the argument presented by Chow in \cite[Section 7]{chow1989finite}. 
	
%	To this end, we begin by establishing continuity and ellipticity-type estimates for the nonlinear operator associated with the weak formulation:
%	\[
%	\langle Au,v \rangle = \int_{\mathbb{R}^{d}} |\nabla^{s} u|^{p-2} \nabla^{s}u  \cdot \nabla^{s} v \, dx.
%	\]
%	
%	\begin{theorem}
%		\label{Teorema propiedad eliptica}
%		There exists a constant \( \alpha > 0 \) such that, for all \( u, v \in V \), the following inequalities hold:
%		\begin{itemize}
%			\item For \( 1 < p \leq 2 \),
%			\[
%			\left(\|u\|_{\widetilde{X}^{s,p}(\Omega)} + \|v\|_{\widetilde{X}^{s,p}(\Omega)}\right)^{2-p} \langle A u - A v, u - v \rangle \geq \alpha \|u - v\|_{\widetilde{X}^{s,p}(\Omega)}^2.
%			\]
%			\item For \( 2 \leq p < \infty \),
%			\[
%			\langle A u - A v, u - v \rangle \geq \alpha \|u - v\|_{\widetilde{X}^{s,p}(\Omega)}^p.
%			\]
%		\end{itemize}
%	\end{theorem}
%	
%	The proof follows by adapting the corresponding result from \cite[Section~4, Proposition~3]{chow1989finite}.
%	\begin{theorem}
%		\label{Teorema continuidad de A}
%		There exists a constant \( \gamma > 0 \) such that, for all \( u, v \in V \),
%		\[
%		\|A u - A v\|^* \leq 
%		\begin{cases}
%			\gamma\|u - v\|_{\widetilde{X}^{s,p}(\Omega)}^{p-1}, & \text{if } 1 < p \leq 2, \\[4pt]
%			\gamma\|u - v\|_{\widetilde{X}^{s,p}(\Omega)}(\|u\|_{\widetilde{X}^{s,p}(\Omega)} + \|v\|_{\widetilde{X}^{s,p}(\Omega)})^{p-2}, & \text{if } 2 \leq p < \infty.
%		\end{cases}
%		\]
%	\end{theorem}
%	
%	The proof is standard and relies on adapting the analogous result in \cite[Section~4, Proposition~1]{chow1989finite}.
	\begin{theorem} \label{thm:best-approximation}
		Let $f \in X^{-s,p'}(\Omega)$, \( u \) be the solution of problem \eqref{Implementación: Problema de minimización equivalente} and \( \feu_h \) the solution of the approximate problem \eqref{eq:discrete-problem}. Then, there exists a constant \( C > 0 \), independent of \( h \), such that:
		\begin{equation} \label{eq:error-estimate}
		\|u - \feu_h\|_{\widetilde{X}^{s,p}(\Omega)} \leq
\left\lbrace		\begin{aligned}
			& C  \inf_{v_h \in V_h} \|u - v_h\|_{\widetilde{X}^{s,p}(\Omega)}^{p / 2}, & \text{if } 1 < p \leq 2, \\
			& C  \inf_{v_h \in V_h} \|u - v_h\|_{\widetilde{X}^{s,p}(\Omega)}^{2 / p}, & \text{if } 2 \leq p < \infty.
		\end{aligned} \right.
		\end{equation}
	\end{theorem}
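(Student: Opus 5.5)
The argument is a nonlinear Céa lemma in the spirit of Chow. It combines Galerkin orthogonality with the ellipticity and continuity bounds of Proposition~\ref{prop:elipticity-continuity}. Uniform bounds on $u$ and $\feu_h$ come from the stability estimates \eqref{eq:stability} and \eqref{eq:discrete-stability}.

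Since $V_h \subset \widetilde{X}^{s,p}(\Omega)$, subtracting \eqref{eq:discrete-problem} from \eqref{Problema variacional Plaplaciano} gives Galerkin orthogonality:
\[
\langle \mathcal{A}u - \mathcal{A}\feu_h, w_h\rangle = 0 \quad \forall w_h \in V_h.
\]
Fix any $v_h \in V_h$. Taking $w_h = \feu_h - v_h$ yields
\[
\langle \mathcal{A}u - \mathcal{A}\feu_h, u - \feu_h\rangle = \langle \mathcal{A}u - \mathcal{A}\feu_h, u - v_h\rangle \le \|\mathcal{A}u - \mathcal{A}\feu_h\|_{X^{-s,p'}(\Omega)} \, \|u - v_h\|_{\widetilde{X}^{s,p}(\Omega)}.
\]
Write $e := \|u-\feu_h\|_{\widetilde X^{s,p}(\Omega)}$ and $\delta := \|u-v_h\|_{\widetilde X^{s,p}(\Omega)}$. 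Set $M := \|u\|_{\widetilde X^{s,p}(\Omega)} + \|\feu_h\|_{\widetilde X^{s,p}(\Omega)}$. By \eqref{eq:stability} and \eqref{eq:discrete-stability}, $M \le 2\|f\|_{X^{-s,p'}(\Omega)}^{1/(p-1)}$, independently of $h$.

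\textbf{Case $1<p\le2$.} The left side is bounded below by \eqref{eq:elipticity}, and the dual norm above by \eqref{eq:continuity}:
\[
\alpha e^2 M^{p-2} \le \gamma e^{p-1}\delta .
\]
Dividing by $e^{p-1}$ (the case $e=0$ is trivial) gives $e^{3-p} \le (\gamma/\alpha) M^{2-p}\delta$. This is a valid estimate, but the exponent $1/(3-p)$ it produces differs from the target $p/2$. To get $p/2$ I would instead use the sharper, Hölder-based monotonicity that holds in this range,
\[
\langle \mathcal{A}u-\mathcal{A}v,u-v\rangle \ge c\,\|u-v\|^2 (\|u\|+\|v\|)^{p-2}.
\]
I would pair it with a continuity bound that also carries the weight:
\[
\langle \mathcal{A}u-\mathcal{A}\feu_h,u-v_h\rangle \le C\int |\grads u-\grads\feu_h|^{p-1}|\grads(u-v_h)| \le C e^{p-1}\delta.
\]
A cruder route also works. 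Bound the orthogonality right-hand side by $(\|\mathcal{A}u\|+\|\mathcal{A}\feu_h\|)\delta \le C M^{p-1}\delta$, which gives $e^2 \le C\delta$. Combined with the fact that $\delta^{1/2} \le C\delta^{p/2}$ fails for small $\delta$, this shows some care is needed. The key requirement is that the final power be at least the stated one for $\delta\lesssim 1$, with $\delta$ bounded via $\delta\le \|u\|$ by choosing $v_h=0$. Since $1/(3-p)\ge p/2$ for $p\in(1,2]$, because $p(3-p)\le 2 \iff (p-1)(p-2)\ge 0$ on that interval, the estimate $e\le C\delta^{1/(3-p)}$ together with $\delta\le C$ implies $e\le C\delta^{p/2}$. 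I will therefore simply use the first bound and this exponent comparison.

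\textbf{Case $p\ge2$.} Here \eqref{eq:elipticity} and \eqref{eq:continuity} give
\[
\alpha e^p \le \gamma e M^{p-2}\delta .
\]
Hence $e^{p-1}\le C\delta$, that is, $e\le C\delta^{1/(p-1)}$. Since $1/(p-1)\ge 2/p$ for $p\ge2$, and $\delta\le\|u\|\le C$, this gives $e\le C\delta^{2/p}$. Taking the infimum over $v_h$ completes the proof in both cases, with $C$ depending on $\alpha$, $\gamma$ and $\|f\|_{X^{-s,p'}(\Omega)}$.

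The main subtlety is this exponent bookkeeping: the statement's exponents are weaker than what the direct argument gives. Converting between the two relies on $\delta$ being uniformly bounded, which is what the choice $v_h=0$ and stability provide.
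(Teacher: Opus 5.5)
\textbf{The gap.} Your monotonicity--continuity (C\'ea-type) argument is correct up to $e\le C\delta^{1/(3-p)}$ for $1<p\le2$ and $e\le C\delta^{1/(p-1)}$ for $p\ge2$. However, the final exponent comparison is reversed in both regimes, so the proof does not reach \eqref{eq:error-estimate} when $p\neq2$.

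On $(1,2)$ we have $(p-1)(p-2)<0$, hence $p(3-p)>2$ and $\frac1{3-p}<\frac p2$. For example, $p=\frac32$ gives $\frac23<\frac34$. For $p>2$, the inequality $\frac1{p-1}\ge\frac2p$ is equivalent to $p\le2$, so in fact $\frac1{p-1}<\frac2p$. For example, $p=3$ gives $\frac12<\frac23$.

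Boundedness of $\delta$ only lets you pass from $\delta^{a}$ to $\delta^{b}$ when $a\ge b$. That is, you can weaken an exponent but never strengthen it. So what you have proved is strictly weaker than the statement. The remark after Corollary~\ref{cor:modified-convergence-rates} says exactly this: the Glowinski--Marroco argument, built on Galerkin orthogonality plus \eqref{eq:elipticity}--\eqref{eq:continuity}, yields precisely your exponents $1/(3-p)$ and $1/(p-1)$. The sharper exponents in the theorem have a different origin, and no adjustment of constants within your route closes this gap.

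\textbf{The missing idea.} The paper follows Chow and uses the minimization property rather than orthogonality. Since $V_h\subset\widetilde X^{s,p}(\Omega)$ and $\feu_h$ minimizes the same energy $J$ over $V_h$, we have $J(\feu_h)-J(u)\le J(v_h)-J(u)$ for all $v_h\in V_h$. One then bounds the energy gap from both sides. Using $\langle\mathcal Au,w\rangle=\langle f,w\rangle$,
\[
J(v)-J(u)=\int_0^1\langle \mathcal A(u+t(v-u))-\mathcal Au,\,v-u\rangle\,dt .
\]
Apply \eqref{eq:elipticity} and \eqref{eq:continuity} to the pair $u+t(v-u)$, $u$ inside the integral. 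The $t$-integration produces the powers, giving
\[
c\,\|v-u\|^p\le J(v)-J(u)\le C\|v-u\|^2 \quad (p\ge2),\qquad c\,\|v-u\|^2\le J(v)-J(u)\le C\|v-u\|^p \quad (p\le2).
\]
The norms in the weights are controlled as follows. For the lower bound when $p\le2$, they are handled by \eqref{eq:stability} and \eqref{eq:discrete-stability}. For the upper bound when $p\ge2$, restrict to $\|u-v_h\|\le1$; the complementary case is trivial by stability.

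Now take $v=\feu_h$ in the lower bound and $v=v_h$ in the upper bound. This gives $\|u-\feu_h\|^p\lesssim\|u-v_h\|^2$ for $p\ge2$ and $\|u-\feu_h\|^2\lesssim\|u-v_h\|^p$ for $p\le2$, which are exactly the exponents $2/p$ and $p/2$. The gain comes from comparing energies directly. This avoids the factor $\|\mathcal Au-\mathcal A\feu_h\|$, which orthogonality forces you to estimate through $e$ itself and which is where your route loses.
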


Under suitable regularity assumptions on the right-hand side $f$, one can exploit the improved regularity of solutions to derive a priori convergence rates.
	
\begin{corollary}
	\label{Corolario:ConvergenciaBesov}
	Let \( \Omega \subset \mathbb{R}^d \) be a bounded Lipschitz domain. 
	Let \( u \in \widetilde{X}^{s,p}(\Omega) \) be the weak solution of \eqref{Implementación: Problema de minimización equivalente} and \( \feu_h \) the solution of the approximate problem \eqref{eq:discrete-problem} over a family of shape-regular meshes. Under the same assumptions on $f$ as in Theorem~\ref{thm:regularity}, we have the following estimates, with hidden constants depending on \(f, d, s, p, \Omega \), and the mesh regularity constant $\gamma_0$, but independent of \( h \).
	
	If \(p \geq 2\), then
\[
	\|u - \feu_h\|_{\widetilde{X}^{s,p}(\Omega)}
	\lesssim \left\lbrace \begin{aligned}
		h^{\frac{2s}{p(p-1)}} |\log h|^{\frac2p},  & \quad \mbox{ if } s \le \frac{p-1}{p}, \\
		h^{\frac2{p^2}}  |\log h|^{\frac2p}, & \quad \mbox{ if } s \ge \frac{p-1}{p}.
	\end{aligned} \right.
	\]

In contrast, if \( 1 < p \le 2\), then
\[
	\|u - \feu_h\|_{\widetilde{X}^{s,p}(\Omega)}
	\lesssim \left\lbrace \begin{aligned}
		h^{\frac{sp}2} |\log h|^{\frac{p}2},  & \quad \mbox{ if } s \le \frac12, \\
		h^{\frac{p}4} |\log h|^{\frac{p}2}, & \quad \mbox{ if } s \ge \frac12.
	\end{aligned} \right.
	\]
\end{corollary}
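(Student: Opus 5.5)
The plan is to chain three earlier results: the quasi-best approximation estimate of Theorem~\ref{thm:best-approximation}, the interpolation bound of Theorem~\ref{Cota de interpolacion}, and the Besov regularity of Theorem~\ref{thm:regularity}. The missing link is a quantitative embedding of the Besov space $\dot B^{\sigma}_{p,\infty}(\Omega)$ into $\widetilde W^{\sigma-\delta,p}(\Omega)$ for small $\delta>0$, whose constant blows up in a controlled way as $\delta \to 0$. Both the contiguity loss $\varepsilon$ from Lemma~\ref{lem:contiguity} (hidden in Theorem~\ref{Cota de interpolacion}) and the Besov-to-Sobolev loss $\delta$ will then be balanced by taking $\varepsilon=\delta\simeq |\log h|^{-1}$. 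This choice is what produces the logarithmic factors.

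\textbf{Step 1: Besov-to-Sobolev embedding.} For $u$ supported in $\overline\Omega$ and $0<\delta<\sigma$, I will show
\[
\|u\|_{\widetilde W^{\sigma-\delta,p}(\Omega)} \lesssim \delta^{-1/p}\,\|u\|_{\dot B^{\sigma}_{p,\infty}(\Omega)} .
\]
I write the Gagliardo seminorm in difference form, $\int_{\R^d} |z|^{-d-(\sigma-\delta)p}\,\|\Delta_z^k u\|_{L^p}^p\,dz$, with first differences ($k=1$) when $\sigma<1$ and second differences ($k=2$) when $1\le \sigma<2$. I then split the integral at $|z|=1$:
\begin{itemize}
\item For $|z|\le 1$, I use $\|\Delta_z^k u\|_{L^p}\le |z|^\sigma \|u\|_{\dot B^\sigma_{p,\infty}}$. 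The radial integral becomes $\int_0^1 r^{\delta p-1}\,dr = (\delta p)^{-1}$, which is the source of the factor $\delta^{-1/p}$.
\item For $|z|>1$, the integral is bounded by a multiple of $\|u\|_{L^p}$, which is controlled via the Poincar\'e inequality since $u$ has bounded support.
\end{itemize}
The relevant orders $\sigma\in\{2s,\ s+\tfrac{s}{p-1},\ s+\tfrac1p,\ s+\tfrac12\}$ are all below $2$, so $t:=\sigma-\delta\in(s,2]$, as Theorem~\ref{Cota de interpolacion} requires.

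\textbf{Step 2: interpolation error.} Apply Theorem~\ref{Cota de interpolacion} with $t=\sigma-\delta$ and combine it with Step 1:
\[
\inf_{v_h}\|u-v_h\|_{\widetilde X^{s,p}(\Omega)}\le \|u-\pi_h u\|_{\widetilde X^{s,p}(\Omega)} \lesssim \varepsilon^{-\frac{p-1}{p}}\delta^{-\frac1p}\,h^{\sigma-s-\varepsilon-\delta}\,\|u\|_{\dot B^\sigma_{p,\infty}(\Omega)} .
\]
The Besov norm on the right is bounded in terms of $f$ by Theorem~\ref{thm:regularity}.

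\textbf{Step 3: choice of $\varepsilon,\delta$.} Take $\varepsilon=\delta=|\log h|^{-1}$, admissible once $h$ is small enough that $\varepsilon<\min\{1-s,t-s\}$. Then $h^{-\varepsilon-\delta}=e^2$, and the prefactor is $|\log h|^{\frac{p-1}{p}+\frac1p}=|\log h|$. Hence
\[
\inf_{v_h}\|u-v_h\|_{\widetilde X^{s,p}(\Omega)}\lesssim h^{\sigma-s}|\log h|,
\]
which gives $h^{s/(p-1)}$, $h^{1/p}$, $h^{s}$ and $h^{1/2}$ in the four regimes of Theorem~\ref{thm:regularity}.

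\textbf{Step 4: conclusion.} Insert this into \eqref{eq:error-estimate}, raising to the power $2/p$ for $p\ge2$ and $p/2$ for $p\le2$. This gives exactly the claimed rates, for example $\bigl(h^{s/(p-1)}|\log h|\bigr)^{2/p}=h^{\frac{2s}{p(p-1)}}|\log h|^{2/p}$. The constant $C$ in Theorem~\ref{thm:best-approximation} depends only on $f$ through the stability bounds \eqref{eq:stability} and \eqref{eq:discrete-stability}.

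The main obstacle is Step 1. I need the explicit $\delta^{-1/p}$ dependence, not just an abstract embedding, and I must handle the case $\sigma>1$ with second differences while keeping the equivalence constants of the seminorms uniform in $\delta$. Everything else is bookkeeping of exponents.
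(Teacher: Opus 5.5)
Your proposal follows the paper's proof. The paper likewise chains Theorems~\ref{thm:regularity}, \ref{Cota de interpolacion} and \ref{thm:best-approximation} through a quantitative embedding $\dot B^{\sigma}_{p,\infty}(\Omega)\subset\widetilde W^{\sigma-\delta,p}(\Omega)$ with constant $\lesssim\delta^{-1/p}$ (which it cites from \cite[Lemma 2.1]{borthagaray2024quasi} rather than reproving) and takes the loss parameters equal to $|\log h|^{-1}$; your exponent bookkeeping reproduces all four rates.

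One caution on your Step~1. At the borderline values $s=\frac{p-1}{p}$ ($p\ge2$) and $s=\frac12$ ($p\le2$) one has $\sigma=1$, so the target order $1-\delta$ is below $1$ while you measure $u$ with second differences. The comparison between first- and second-difference seminorms of order $1-\delta$ is not uniform as $\delta\to0$: lacunary Zygmund-type functions give $|u|_{W^{1-\delta,p}}\gtrsim\delta^{-1/2-1/p}$. So at those two values of $s$ the factor $\delta^{-1/p}$ only holds if $\dot B^{1}_{p,\infty}$ is understood with first differences; otherwise you get a larger power of $|\log h|$.
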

\begin{proof}
The proof reduces to putting together Theorems~\ref{thm:regularity},~\ref{Cota de interpolacion}, and ~\ref{thm:best-approximation}, combining them with the explicit constant dependence in the embeddings $\dot B^{\sigma+\varepsilon}_{p,\infty}(\Omega) \subset \widetilde{W}^{\sigma, p}(\Omega)$ for $\sigma \in (0,2)$ (cf. \cite[Lemma 2.1]{borthagaray2024quasi})
and choosing $\varepsilon = |\log h|^{-1}$ for $h>0$ sufficiently small.
\end{proof}

\begin{remark}\label{rem:quasinorm}
The exponents $p/2$ and $2/p$ in Theorem~\ref{thm:best-approximation} degenerate as $p \to 1^+$ and as $p \to \infty$. In the local setting,
	rates that are robust with respect to $p$ are obtained by measuring the
	error in quasi-norms \cite{BarrettLiu, DieningKreuzer}. An analysis of this type is beyond the
	scope of this manuscript, and is left for future work.
\end{remark}

%%%%%%%%%%%%%%%%%
%%%%%%%%%%%%%%%%%
\section{Augmented Lagrangian formulation}\label{sec:AugmentedFormulation}
%%%%%%%%%%%%%%%%%
%%%%%%%%%%%%%%%%%
So far, we have discussed the theoretical approximation properties of the finite element solution to the problem \eqref{eq:discrete-problem}, but have not discussed the implementation of the method. If one considers a direct approach to solve \eqref{eq:discrete-problem}, the resulting formulation leads to a nonlinear system of the form \(\mathbf{K^u}\mathbf{U}=\mathbf{F},\)
where
\begin{equation} \label{eq:def-Ku}
\mathbf{K}_{i j}^{\mathbf{u}} \coloneq \int_{\mathbb{R}^d}\left|\sum_k U_k \nabla^s \varphi_k(x)\right|^{p-2} \nabla^s \varphi_i(x) \cdot \nabla^s \varphi_j(x) \, d x.
\end{equation}
There are some important aspects to consider in this formulation and its solution through iterative methods. In first place, the resulting stiffness matrices are dense and need to be recomputed at every step of the iteration. While dense matrices are expected when solving nonlocal problems, their computation can become a bottleneck in practice. In our setting, taking into account the definition of the Riesz fractional gradient \eqref{eq:def-grads}, the calculation of $ \nabla^s \varphi$ for a finite element basis function $\varphi$ involves integration over the whole space, and the resulting function does not have a compact support. Even if these gradients are precomputed, the assembly of $\mathbf{K^u}$ in \eqref{eq:def-Ku} requires the calculation of another weighted integral over the whole space. 

Consequently, a direct implementation appears too expensive computationally. This motivates a reformulation of the problem that reduces the computational complexity. To this end, following \cite{glowinski1975approximation}, we recast the original problem as a constrained minimization problem and solve it by using an augmented Lagrangian method.

Indeed, if one introduces the auxiliary variable \( \mathbf{w} := \nabla^{s} u \), problem \eqref{Implementación: Problema de minimización equivalente}--\eqref{eq:functional} can be rewritten as
	\begin{equation}
		\label{Problema variacional Plaplaciano condicionado}
		\min \ \left[\frac{1}{p} \int_{\mathbb{R}^{d}} \left|\mathbf{w} \right|^{p} \, dx - \langle f, v \rangle \right],
	\end{equation}
	where the minimum needs to be computed over the set
	\begin{equation}
	\label{set-restrict}
	\left\{ (v, \mathbf{w}) \in \widetilde{X}^{s,p}(\Omega) \times L^{p}(\mathbb{R}^{d} ; \mathbb{R}^d) \, : \, \nabla^s v - \mathbf{w} = 0 \text{ in } \mathbb{R}^{d} \right\}.
\end{equation}
We point out that, due to the nonlocal nature of our problem, the constraint $\mathbf{w} = \nabla^s u$ is imposed over the whole space $\mathbb{R}^d$. 
	To address the constrained problem \eqref{Problema variacional Plaplaciano condicionado}, we employ an augmented Lagrangian approach. First, we set the space $\mathbb{W} := \widetilde{X}^{s,p}(\Omega) \times L^{p}(\mathbb{R}^{d} ; \mathbb{R}^d) \times L^{p'}(\mathbb{R}^{d} ; \mathbb{R}^d)$ with the standard product topology. We fix a positive number $r>0$ and define 
    $\mathcal{L}_{r} \colon \mathbb{W} \to \mathbb{R} \cup \{ \infty\}$ by
	\[
	\begin{split}
		\mathcal{L}_{r}(v, \mathbf{q}, \czeta) := & \frac{1}{p} \int_{\mathbb{R}^{d}} \left|\mathbf{q} \right|^{p} \, dx - \langle f, v \rangle
	 + \int_{\R^d} \czeta \cdot (\nabla^s v - \mathbf{q}) \, dx + \frac{r}{2} \|\nabla^s v - \mathbf{q}\|_{L^2(\mathbb{R}^{d})}^{2},
	\end{split}
	\]
	where \(\czeta \in L^{p'}(\mathbb{R}^{d}; \mathbb{R}^d)\) is the Lagrange multiplier and \(r > 0\) is a regularization parameter. Similarly to \cite[Ch. VI, Theorem 2.1]{glowinski2013numerical}, saddle points of \(\mathcal{L}_r\) correspond to solutions to the original problem \eqref{Problema variacional Plaplaciano}.
	
	\begin{proposition}
		\label{Equivalencia de los problemas y punto silla}
		Let \(r > 0\). A triplet \((u, \mathbf{w}, \clambda) \in \mathbb{W} \) is a saddle point of \(\mathcal{L}_{r}\) if and only if \(u\) solves \eqref{Problema variacional Plaplaciano}, \(\mathbf{w} = \nabla^s u\), and $\clambda=|\grads u|^{p-2} \grads u$.
	\end{proposition}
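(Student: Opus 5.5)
We follow the finite-dimensional-free argument of \cite[Ch.~VI, Theorem~2.1]{glowinski2013numerical}. A saddle point means
\[
\mathcal{L}_r(u,\mathbf{w},\czeta) \le \mathcal{L}_r(u,\mathbf{w},\clambda) \le \mathcal{L}_r(v,\mathbf{q},\clambda) \qquad \forall (v,\mathbf{q},\czeta) \in \mathbb{W}.
\]
A preliminary remark is that the quadratic penalty $\|\grads v - \mathbf{q}\|_{L^2}^2$ may be infinite, since $L^p$ functions need not be in $L^2$; this is why $\mathcal{L}_r$ takes values in $\mathbb{R}\cup\{\infty\}$. At a saddle point, however, the penalty vanishes by the first step below, so all quantities evaluated there are finite.

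\textbf{Necessity.} The left inequality reads $\int \czeta\cdot(\grads u - \mathbf{w}) \le \int \clambda\cdot(\grads u-\mathbf{w})$ for all $\czeta \in L^{p'}$. Since $\grads u - \mathbf{w} \in L^p$, the map $\czeta\mapsto\int\czeta\cdot(\grads u-\mathbf{w})$ is a continuous linear functional on $L^{p'}$ bounded above, hence it vanishes identically. Therefore $\mathbf{w}=\grads u$.

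Next, the right inequality says that $(u,\mathbf{w})$ minimizes the convex functional $(v,\mathbf{q})\mapsto\mathcal{L}_r(v,\mathbf{q},\clambda)$. We take one-sided directional derivatives in admissible directions. For directions $(v,\mathbf{q})$ with $\grads v-\mathbf{q}\in L^2$, the penalty term contributes $r\int(\grads u-\mathbf{w})\cdot(\cdots)=0$ at first order. To stay within finite values, we perturb $\mathbf{q}$ only by $\mathbf{q}\in L^p\cap L^2$ and $v$ by $v\in C_c^\infty(\Omega)$. In the latter case $\grads v\in L^2\cap L^p$, which follows from Proposition~\ref{prop-grad-riesz-from} and the decay of $\grads v$, as in Lemma~\ref{lem:contiguity}.

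Varying $\mathbf{q}$ gives $|\mathbf{w}|^{p-2}\mathbf{w}-\clambda=0$ tested against a dense class, so $\clambda=|\grads u|^{p-2}\grads u$. Varying $v$ gives $\int\clambda\cdot\grads v=\langle f,v\rangle$ for all $v\in C_c^\infty(\Omega)$. Combining the two identities, and using the density of $C_c^\infty(\Omega)$ in $\widetilde{X}^{s,p}(\Omega)$ together with continuity in $v$, we conclude that $u$ solves \eqref{Problema variacional Plaplaciano}.

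The main obstacle is this density. For Lipschitz $\Omega$ we would invoke the known density result for Bessel potential spaces with zero exterior values. Alternatively, one can avoid it by noting directly that $\mathcal{L}_r(v,\grads v,\clambda)=J(v)$ for all $v$, with $\clambda$ contributing nothing on the constraint set. The right inequality restricted to $\mathbf{q}=\grads v$ then gives $J(u)\le J(v)$ for all $v\in\widetilde X^{s,p}(\Omega)$, and hence \eqref{Implementación: Problema de minimización equivalente}, which is equivalent to \eqref{Problema variacional Plaplaciano}. This shortcut makes density unnecessary for the $u$-part, and we would use it.

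\textbf{Sufficiency.} Let $u$ solve \eqref{Problema variacional Plaplaciano}, and set $\mathbf{w}=\grads u$ and $\clambda=|\grads u|^{p-2}\grads u\in L^{p'}$. The left inequality is an equality, because $\grads u-\mathbf{w}=0$. For the right inequality, fix $(v,\mathbf{q})$; if the penalty is infinite there is nothing to prove. Otherwise, by convexity of $\mathbf{q}\mapsto\frac1p|\mathbf{q}|^p$,
\[
\frac1p\int|\mathbf{q}|^p \ge \frac1p\int|\mathbf{w}|^p+\int\clambda\cdot(\mathbf{q}-\mathbf{w}).
\]
We add $-\langle f,v\rangle+\int\clambda\cdot(\grads v-\mathbf{q})$ and use \eqref{Problema variacional Plaplaciano} with test function $v-u$, which gives $\langle f,v-u\rangle=\int\clambda\cdot(\grads v-\grads u)$. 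The $\clambda$-terms then cancel, leaving
\[
\mathcal{L}_r(v,\mathbf{q},\clambda)\ge \frac1p\int|\mathbf{w}|^p-\langle f,u\rangle+\frac r2\|\grads v-\mathbf{q}\|_{L^2}^2 \ge \mathcal{L}_r(u,\mathbf{w},\clambda).
\]
This proves that $(u,\mathbf{w},\clambda)$ is a saddle point of $\mathcal{L}_r$.
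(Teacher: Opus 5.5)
Your argument is correct and is essentially the route of \cite[Ch.~VI, Theorem~2.1]{glowinski2013numerical}, which the paper invokes here without writing out a proof. The multiplier inequality forces $\mathbf{w}=\grads u$; restricting the primal inequality to $\mathbf{q}=\grads v$ gives $J(u)\le J(v)$, so no density of $C_c^\infty(\Omega)$ is needed; testing in $\mathbf{q}$ against $L^p\cap L^2$ identifies $\clambda$; and the converse follows from the subgradient inequality for $\frac1p|\cdot|^p$.

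The one point to repair is the finiteness claim. Your first step cancels the penalty from both sides of the left inequality, so it cannot itself be what makes $\|\grads u-\mathbf{w}\|_{L^2}$ finite. Instead, take $(v,\mathbf{q})=(0,\mathbf{0})$ in the right inequality, which gives $\mathcal{L}_r(u,\mathbf{w},\clambda)\le\mathcal{L}_r(0,\mathbf{0},\clambda)=0<\infty$.
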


%     $\mathcal{L}_{r} \colon \mathbb{W} \to \mathbb{R}$ by
% 	\[
% 	\begin{split}
% 		\mathcal{L}_{r}(v, \mathbf{q}, \czeta) := & \frac{1}{p} \int_{\mathbb{R}^{d}} \left|\mathbf{q} \right|^{p} \, dx - \langle f, v \rangle
% 	 + \int_{\R^d} \czeta \cdot (\nabla^s v - \mathbf{q}) \, dx + \frac{r}{2} \|\nabla^s v - \mathbf{q}\|_{L^2(\mathbb{R}^{d})}^{2},
% 	\end{split}
% 	\]
% 	where \(\czeta \in L^{p'}(\mathbb{R}^{d}; \mathbb{R}^d)\) is the Lagrange multiplier and \(r > 0\) is a regularization parameter. 	The following result asserts that saddle points of \(\mathcal{L}_r\) correspond to solutions to the original problem \eqref{Problema variacional Plaplaciano}.
	
% 	\begin{proposition}
% 		\label{Equivalencia de los problemas y punto silla}
% 		Let \(r > 0\). A triplet \((u, \mathbf{w}, \clambda) \in \mathbb{W} \) is a critical point of \(\mathcal{L}_{r}\) if and only if \(u\) solves \eqref{Problema variacional Plaplaciano}, \(\mathbf{w} = \nabla^s u\), and $\clambda=|\grads u|^{p-2} \grads u$.
% 	\end{proposition}
	
% This proposition can be proven directly by computing the first variations of $\mathcal{L}_r$ with respect to its three variables. We omit its proof, since Proposition~\ref{Equivalencia de los problemas y punto silla finito dimensional} below is proven with identical steps.

%%%%%%%%%%%%%%%%%
	\subsection{An approximate energy}
%%%%%%%%%%%%%%%%%
At this stage, we introduce the additional finite-dimensional spaces required to reformulate the problem as a constrained minimization problem. We keep the notation from Section~\ref{sec:FE}. In particular, we use $V_h$ to denote the space of continuous, piecewise linear functions on the mesh $\mathcal{T}_h$, where we seek $u_h$. We recall the use of an auxiliary computational domain
\(\Omega_{\mathrm{ext}}\Supset \Omega\), and that \(\widetilde{\mathcal{T}}_h\) denotes triangulation of \(\Omega_{\mathrm{ext}}\) satisfying the same regularity assumptions as \(\mathcal{T}_h\), and coinciding with \(\mathcal{T}_h\) on \(\Omega\). Now, associated with \(\widetilde{\mathcal{T}}_h\), we define the piecewise constant vector-valued space
\[
L_h = \left\{\mathbf{z}_h \colon \Omega_{\mathrm{ext}} \to \R^d : \mathbf{z}_h = \sum_{T \in \widetilde{\mathcal{T}}_h} \cz_T \chi_T, \ \cz_T \in \mathbb{R}^{d}\right\},
\]
where \(\chi_T\) denotes the characteristic function of the element \(T\). We denote by \(M\) the number of elements of \(\widetilde{\mathcal{T}}_h\), and by \(\{\vPsi_i\}_{i=1}^{Md}\) the corresponding basis functions of the finite-dimensional space \(L_h\). We also introduce the vector-valued \(L^2\)-orthogonal projection    \(\vPi_h:L^1_{loc}(\R^d;\R^d)\to L_h\): for any \(T_j\in\widetilde{\mathcal{T}}_h\),
\[
(\vPi_h{\mathbf{\Phi}})|_{T_j}=\frac{1}{|T_j|}\int_{T_j}{\mathbf{\Phi}}\,dx .
\]

To obtain an exact discrete constraint, we introduce the operator
\[
\Bh\colon V_h\longrightarrow L_h,
\qquad
\Bh v_h:=\vPi_h\grads v_h.
\]
% and impose \(\mathbf{w}_h=\Bh u_h\). 
The method we proposed is based on an augmented Lagrangian formulation with constraint \(\mathbf{w}_h=\Bh u_h\). To this end, we define the discrete energy $J_h \colon V_h \to \R$,
\begin{equation}\label{eq:mod-energy}
	J_h(v_h)=\frac{1}{p}\int_{\Omega_{\mathrm{ext}}}|\Bh v_h|^p\,\mathrm{d}x-\langle f,v_h\rangle.
\end{equation}
Its corresponding Euler-Lagrange equation is
\begin{equation}\label{eq:mod-discrete-sol}
	\int_{\Omega_{\mathrm{ext}}}
	|\Bh u_h|^{p-2}\Bh u_h\cdot\Bh v_h\,\mathrm{d}x
	=\langle f,v_h\rangle
	\qquad\forall v_h\in V_h.
\end{equation}

\begin{remark} \label{rem:consistency}
Replacing the continuous energy $J$ with its discrete counterpart $J_h$ introduces a consistency error, which is analyzed in Section~\ref{sec:convergence}. This is new to the nonlocal setting, since in the local case ($s=1$) it is clear that $\nabla V_h \subset L_h$ and therefore $\Bh = \nabla$.
\end{remark}

The injectivity of the operator $\Bh$ is instrumental for the well-posedness of the discrete minimization problem \eqref{eq:mod-discrete-sol}.

\begin{proposition}\label{prop:Bh-injective}
	The operator \(\Bh\) is injective.
\end{proposition}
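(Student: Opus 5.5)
The plan is to show that $\Bh v_h = 0$ forces $v_h = 0$ by testing $\Bh v_h$ against a well-chosen element of $L_h$, namely the classical gradient $\nabla v_h$.

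\textbf{Step 1: $\nabla v_h$ belongs to $L_h$.} Let $v_h \in V_h$ with $\Bh v_h = \vPi_h \grads v_h = 0$. Since $v_h$ is continuous and piecewise affine on $\mathcal{T}_h$, and vanishes outside $\Omega$, its classical gradient $\nabla v_h$ is piecewise constant on $\mathcal{T}_h$ and zero on $\Omega^c$. Because $\widetilde{\mathcal{T}}_h$ coincides with $\mathcal{T}_h$ on $\Omega$ and covers $\Omega_{\mathrm{ext}}$, the function $\nabla v_h|_{\Omega_{\mathrm{ext}}}$ is an element of $L_h$.

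\textbf{Step 2: reduce to a quadratic form.} Since $\vPi_h$ is the $L^2(\Omega_{\mathrm{ext}})$-orthogonal projection onto $L_h$, and $\nabla v_h$ is supported in $\Omega \subset \Omega_{\mathrm{ext}}$, we get
\[
0 = \int_{\Omega_{\mathrm{ext}}} \Bh v_h \cdot \nabla v_h \, dx
= \int_{\Omega_{\mathrm{ext}}} \grads v_h \cdot \nabla v_h \, dx
= \int_{\mathbb{R}^d} \grads v_h \cdot \nabla v_h \, dx .
\]
Some care is needed with integrability: $\grads v_h \in L^p(\mathbb{R}^d)$ for every $p\in(1,\infty)$, so it is locally integrable, and $\nabla v_h \in L^\infty$ has compact support. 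Hence all three integrals are finite.

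\textbf{Step 3: positivity via Fourier.} Since $v_h \in \mathrm{Lip}_c(\mathbb{R}^d)$, Proposition~\ref{prop-grad-riesz-from} gives $\grads v_h = I_{1-s} \nabla v_h$. The last integral is therefore $\int_{\mathbb{R}^d} I_{1-s}\nabla v_h \cdot \nabla v_h \, dx$. The Riesz potential $I_{1-s}$ has Fourier symbol $|\xi|^{-(1-s)}$, up to a positive constant depending on normalization conventions. The gradient has symbol $i\xi$. By Plancherel,
\[
\int_{\mathbb{R}^d} \grads v_h \cdot \nabla v_h \, dx = c \int_{\mathbb{R}^d} |\xi|^{1+s} |\widehat{v_h}(\xi)|^2 \, d\xi, \qquad c>0.
\]
This is a positive multiple of the squared homogeneous $H^{(1+s)/2}$ seminorm.

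The step that needs the most care is justifying Plancherel here. The integrand $|\xi|^{s-1}|\xi|^2|\widehat{v_h}|^2$ is integrable: $\widehat{v_h}$ is bounded near the origin, and $v_h \in H^1$ controls large $\xi$ because $1+s<2$. So the identity can be obtained by approximating $v_h$ with mollifications in $C_c^\infty$, or directly from the $L^2$ theory of Fourier multipliers.

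\textbf{Conclusion.} The integral vanishes, so $\widehat{v_h}=0$ almost everywhere. Hence $v_h = 0$, and $\Bh$ is injective.
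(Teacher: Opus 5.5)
Your proposal is correct and follows essentially the same route as the paper: test $\Bh v_h$ against $\nabla v_h\in L_h$, use the orthogonality of $\vPi_h$ to reduce to $\int_{\R^d}\grads v_h\cdot\nabla v_h\,dx$, and conclude from the Fourier representation $c\int_{\R^d}|\xi|^{1+s}|\widehat{v_h}(\xi)|^2\,d\xi$. Your remarks on integrability and on justifying Plancherel supply details that the paper leaves implicit.
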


\begin{proof}
	Let \(v_h\in V_h\) satisfy \(\Bh v_h=0\). Since
	\(\nabla v_h\in L_h\), the orthogonality of \(\vPi_h\) gives
	\[
	0
	=\int_{\Omega_{\mathrm{ext}}}\Bh v_h\cdot\nabla v_h\,\mathrm{d}x
	=\int_{\R^d}\grads v_h\cdot\nabla v_h\,\mathrm{d}x.
	\]
	By the Fourier representation of the fractional gradient,
	\[
	\int_{\R^d}\grads v_h\cdot\nabla v_h\,\mathrm{d}x
	=c_{d,s}\int_{\R^d}
	|\xi|^{1+s}|\widehat{v_h}(\xi)|^2\,\mathrm{d}\xi.
	\]
	Therefore \(v_h=0\), and the result follows.
\end{proof}

\begin{remark} \label{rem:Ch}
Since \(V_h\) is finite-dimensional, Proposition~\ref{prop:Bh-injective}
implies that, for every fixed \(h\), there exists \(C_h>0\) such that
\begin{equation}\label{eq:Bh-inverse-ineq}
	\|v_h\|_{\Xsp}
	\leq C_h\|\Bh v_h\|_{L^p(\Omega_{\mathrm{ext}};\R^d)}
	\qquad\forall v_h\in V_h.
\end{equation}
Our discussion up to this point does not exclude the possibility that the constant \(C_h\) depends on the mesh size \(h\). In the local case ($s=1$), as indicated in Remark~\ref{rem:consistency}, it is immediate to notice that \eqref{eq:Bh-inverse-ineq} is satisfied with $C_h = 1$ independent of $h$. In case $s \in (0,1)$, numerical evidence indicates $C_h$ is independent of the mesh as well, at least for $p=2$ (see Section~\ref{sec:exp-linear}) below. The uniform boundedness of $C_h$ is equivalent to the discrete inf-sup condition
\begin{equation}\label{eq:inf-sup}
\inf_{v_h \in V_h} \sup_{\mathbf{z}_h \in L_h} \frac{\langle \nabla^s v_h, \mathbf{z}_h\rangle}{\|v_h\|_{\Xsp} \|\mathbf{z}_h\|_{L^{p'}(\Omega_{\mathrm{ext}};\R^d)}} \ge \beta > 0.
\end{equation}

Whether this condition is satisfied uniformly is a matter of current research by the authors. In the special case $p=2$ over uniform meshes in one dimension, explicit calculations show the validity of \eqref{eq:inf-sup}; see Appendix~\ref{app:bound-Bh}.
\end{remark}

Next, we discuss the well-posedness of the discrete problem \eqref{eq:mod-discrete-sol}. We associate with \eqref{eq:mod-discrete-sol} the nonlinear operator
\(\mathcal{A}_h:V_h\to V_h'\), defined by
\begin{equation}
\label{eq: def-A-mod}	\langle \mathcal{A}_h v_h,\phi_h\rangle
	:=\int_{\Omega_{\mathrm{ext}}}
	|\Bh v_h|^{p-2}\Bh v_h\cdot\Bh\phi_h\,\mathrm{d}x.
\end{equation}

This operator satisfies the same coercivity property as its continuous counterpart \eqref{eq:def-A};
like Proposition~\ref{prop:elipticity-continuity}, the proof only relies on the pointwise monotonicity inequalities of the map \(\mathbf{q}\mapsto|\mathbf{q}|^{p-2}\mathbf{q}\).

\begin{proposition}\label{prop:coercivity-modified}
	There exists \(\alpha>0\) such that, for all
	\(v_h,z_h\in V_h\),
	\[
	\langle\mathcal{A}_h v_h-\mathcal{A}_h z_h,v_h-z_h\rangle
	\geq
	\begin{cases}
		\displaystyle
		\alpha\,
		\frac{\|\Bh(v_h-z_h)\|_{L^p({\Omega_{\mathrm{ext}}})}^{2}}{
		\bigl(\|\Bh v_h\|_{L^p({\Omega_{\mathrm{ext}}})}+\|\Bh z_h\|_{L^p({\Omega_{\mathrm{ext}}})}\bigr)^{2-p}},
		& 1<p\leq2,\\[3mm]
		\displaystyle
		\alpha\|\Bh(v_h-z_h)\|_{L^p({\Omega_{\mathrm{ext}}})}^{p},
		& 2\leq p<\infty.
	\end{cases}
	\]
    The constant $\alpha$ can be taken as in Proposition~\ref{prop:elipticity-continuity}, and thus is independent of $h$.
\end{proposition}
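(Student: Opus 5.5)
The estimate follows from pointwise vector inequalities, integrated over \(\Omega_{\mathrm{ext}}\), exactly as in the continuous case. Fix \(v_h,z_h\in V_h\) and set \(\mathbf{a}:=\Bh v_h\) and \(\mathbf{b}:=\Bh z_h\). Both are piecewise constant functions in \(L_h\), and by linearity \(\Bh(v_h-z_h)=\mathbf{a}-\mathbf{b}\). By the definition \eqref{eq: def-A-mod},
\[
\langle\mathcal{A}_h v_h-\mathcal{A}_h z_h,v_h-z_h\rangle=\int_{\Omega_{\mathrm{ext}}}\bigl(|\mathbf{a}|^{p-2}\mathbf{a}-|\mathbf{b}|^{p-2}\mathbf{b}\bigr)\cdot(\mathbf{a}-\mathbf{b})\,\mathrm{d}x .
\]
This is the same expression as \(\langle\mathcal{A}u-\mathcal{A}v,u-v\rangle\) in \eqref{eq:def-A}, with \(\grads u,\grads v\) replaced by \(\mathbf{a},\mathbf{b}\) and \(\R^d\) replaced by \(\Omega_{\mathrm{ext}}\). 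The argument behind Proposition~\ref{prop:elipticity-continuity} uses only the structure of \(L^p\) and never the fact that the vector fields are fractional gradients, so it can be repeated word for word.

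For \(p\ge2\), I will invoke the pointwise inequality
\[
\bigl(|\mathbf{a}|^{p-2}\mathbf{a}-|\mathbf{b}|^{p-2}\mathbf{b}\bigr)\cdot(\mathbf{a}-\mathbf{b})\ge c_p|\mathbf{a}-\mathbf{b}|^p,
\]
valid for all \(\mathbf{a},\mathbf{b}\in\R^d\). Integrating it gives \(\alpha\|\Bh(v_h-z_h)\|_{L^p(\Omega_{\mathrm{ext}})}^p\) directly.

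For \(1<p\le2\), I will use
\[
\bigl(|\mathbf{a}|^{p-2}\mathbf{a}-|\mathbf{b}|^{p-2}\mathbf{b}\bigr)\cdot(\mathbf{a}-\mathbf{b})\ge c_p|\mathbf{a}-\mathbf{b}|^2\bigl(|\mathbf{a}|+|\mathbf{b}|\bigr)^{p-2}.
\]
Then I write \(|\mathbf{a}-\mathbf{b}|^p=\bigl[|\mathbf{a}-\mathbf{b}|^2(|\mathbf{a}|+|\mathbf{b}|)^{p-2}\bigr]^{p/2}(|\mathbf{a}|+|\mathbf{b}|)^{(2-p)p/2}\) and apply Hölder's inequality with exponents \(2/p\) and \(2/(2-p)\). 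This yields
\[
\|\mathbf{a}-\mathbf{b}\|_{L^p}^p\le\Bigl(\int|\mathbf{a}-\mathbf{b}|^2(|\mathbf{a}|+|\mathbf{b}|)^{p-2}\Bigr)^{p/2}\,\bigl\||\mathbf{a}|+|\mathbf{b}|\bigr\|_{L^p}^{p(2-p)/2}.
\]
Using \(\||\mathbf{a}|+|\mathbf{b}|\|_{L^p}\le\|\mathbf{a}\|_{L^p}+\|\mathbf{b}\|_{L^p}\) and rearranging gives the stated bound with denominator \(\bigl(\|\Bh v_h\|_{L^p}+\|\Bh z_h\|_{L^p}\bigr)^{2-p}\). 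The degenerate case \(\mathbf{a}=\mathbf{b}=0\) a.e. is trivial, and I will treat it separately.

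The only delicate point is the claim that the constant is the same as in Proposition~\ref{prop:elipticity-continuity}, and hence independent of \(h\). This holds because \(\alpha\) comes only from the constants \(c_p\) in the pointwise inequalities and from Hölder's inequality, with no embedding or Poincaré constant involved. No mesh quantity enters, and in particular the possibly \(h\)-dependent constant \(C_h\) of \eqref{eq:Bh-inverse-ineq} does not appear, because the estimate is stated in terms of \(\|\Bh\cdot\|_{L^p}\) rather than the \(\Xsp\)-norm.
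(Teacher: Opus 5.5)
Your proposal is correct and follows the paper's approach. The paper only remarks that, as for Proposition~\ref{prop:elipticity-continuity}, the estimate comes from the pointwise monotonicity inequalities for $\mathbf{q}\mapsto|\mathbf{q}|^{p-2}\mathbf{q}$, integrated over $\Omega_{\mathrm{ext}}$ with $\Bh v_h,\Bh z_h$ in place of the fractional gradients. Your H\"older step with exponents $2/p$ and $2/(2-p)$ for $1<p<2$ (the case $p=2$ is an identity) supplies the detail the paper leaves implicit, and it correctly shows that no $h$-dependent constant such as $C_h$ enters.
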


In particular, the previous estimate and the injectivity of \(\Bh\) imply that \(\mathcal{A}_h\) is strictly monotone. We next establish the well-posedness of the projected discrete problem.

\begin{proposition}\label{prop:wellposed-mod}
	Let \(p>1\), \(f\in\Xspd\), and consider the discrete functional
	\(J_h\colon V_h\to\mathbb{R}\) defined in \eqref{eq:mod-energy}.
	Then \(J_h\) admits a unique minimizer \(u_h\in V_h\), which is
	equivalently the unique solution of \eqref{eq:mod-discrete-sol}.
	Moreover, \(u_h\) satisfies the stability estimate
	\begin{equation}\label{eq:discrete-ref-stability}
		\|u_h\|_{\Xsp}
		\leq
		C_h^{\frac{p}{p-1}}\|f\|_{\Xspd}^{\frac{1}{p-1}}.
	\end{equation}
\end{proposition}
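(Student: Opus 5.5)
The plan is to use the direct method on the finite-dimensional space $V_h$, with coercivity supplied by Remark~\ref{rem:Ch}. First, $J_h$ is continuous on $V_h$, because $\Bh$ is linear on a finite-dimensional space. It is also convex, since it is the composition of the convex map $\mathbf{q}\mapsto \frac1p\int_{\Omega_{\mathrm{ext}}}|\mathbf{q}|^p$ with the linear map $\Bh$, minus a linear term. Strict convexity follows because $\mathbf{q}\mapsto|\mathbf{q}|^p$ is strictly convex on $\R^d$ for $p>1$ and $\Bh$ is injective by Proposition~\ref{prop:Bh-injective}. Concretely, if $v_h\neq z_h$, then $\Bh v_h\neq \Bh z_h$ on a set of positive measure, since both are piecewise constant and differ on at least one element. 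Hence the convexity inequality is strict at the midpoint.

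For coercivity, I would combine \eqref{eq:Bh-inverse-ineq} with the duality bound
\[
|\langle f,v_h\rangle|\le \|f\|_{\Xspd}\|v_h\|_{\Xsp}\le C_h\|f\|_{\Xspd}\|\Bh v_h\|_{L^p(\Omega_{\mathrm{ext}})}.
\]
This gives
\[
J_h(v_h)\ge \tfrac1p\|\Bh v_h\|_{L^p}^p - C_h\|f\|_{\Xspd}\|\Bh v_h\|_{L^p}.
\]
Since $\|\Bh\cdot\|_{L^p}$ is a norm on $V_h$, the right-hand side tends to $+\infty$ as $\|v_h\|\to\infty$. A continuous coercive function on a finite-dimensional space attains its minimum, and strict convexity makes the minimizer unique.

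For the equivalence with \eqref{eq:mod-discrete-sol}, I would compute the Gateaux derivative. The function $t\mapsto \frac1p|\mathbf{a}+t\mathbf{b}|^p$ is $C^1$ for $p>1$, with derivative $|\mathbf{a}+t\mathbf{b}|^{p-2}(\mathbf{a}+t\mathbf{b})\cdot\mathbf{b}$, interpreted as $0$ where $\mathbf{a}+t\mathbf{b}=0$. Since all quantities are elementwise constant, differentiation under the integral is trivial. The derivative of $J_h$ at $u_h$ in direction $v_h$ is therefore $\langle\mathcal{A}_h u_h,v_h\rangle-\langle f,v_h\rangle$. For a differentiable convex function, minimizers coincide exactly with critical points, which gives the equivalence. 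Uniqueness of solutions to \eqref{eq:mod-discrete-sol} also follows independently from the strict monotonicity noted after Proposition~\ref{prop:coercivity-modified}.

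For the stability estimate, I would test \eqref{eq:mod-discrete-sol} with $v_h=u_h$ and apply the bound above:
\[
\|\Bh u_h\|_{L^p}^p=\langle f,u_h\rangle\le C_h\|f\|_{\Xspd}\|\Bh u_h\|_{L^p}.
\]
Dividing by $\|\Bh u_h\|_{L^p}$ (the case $u_h=0$ is trivial) gives $\|\Bh u_h\|_{L^p}\le (C_h\|f\|_{\Xspd})^{1/(p-1)}$. Applying \eqref{eq:Bh-inverse-ineq} once more yields
\[
\|u_h\|_{\Xsp}\le C_h^{1+\frac1{p-1}}\|f\|_{\Xspd}^{\frac1{p-1}}=C_h^{\frac p{p-1}}\|f\|_{\Xspd}^{\frac1{p-1}},
\]
which is exactly \eqref{eq:discrete-ref-stability}.

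No step is genuinely hard. The only point that needs care is justifying strict convexity, or equivalently uniqueness, through injectivity of $\Bh$, and checking that the constant tracks correctly through the two uses of $C_h$.
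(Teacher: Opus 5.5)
Your proposal is correct and follows the paper's proof essentially step by step. The paper also derives strict convexity from that of $\mathbf{q}\mapsto|\mathbf{q}|^p/p$ together with the injectivity of $\Bh$, gets coercivity from \eqref{eq:Bh-inverse-ineq}, obtains \eqref{eq:mod-discrete-sol} from the first variation, and proves the stability bound by testing with $v_h=u_h$ and using $C_h$ twice; the only cosmetic difference is that you write coercivity in terms of $\|\Bh v_h\|_{L^p}$ rather than $\|v_h\|_{\Xsp}$, which is equivalent here.
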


\begin{proof}
	The map \(\mathbf{q}\mapsto|\mathbf{q}|^p/p\) is strictly convex for
	every \(p>1\). Hence, the injectivity of \(\Bh\) implies the strict
	convexity of \(J_h\). Moreover, by \eqref{eq:Bh-inverse-ineq},
	\[
	J_h(v_h)
	\geq
	\frac{1}{pC_h^p}\|v_h\|_{\Xsp}^p
	-\|f\|_{\Xspd}\|v_h\|_{\Xsp},
	\]
	and therefore \(J_h\) is coercive. Existence and uniqueness follow from
	the finite dimensionality of \(V_h\), while vanishing the first
	variation of \(J_h\) gives \eqref{eq:mod-discrete-sol}. Finally, testing \eqref{eq:mod-discrete-sol} with \(v_h=u_h\) yields
	\[
	\|\Bh u_h\|_{L^p}^{p}
	\le
	\|f\|_{\Xspd}\|u_h\|_{\Xsp}
	\le
	C_h\|f\|_{\Xspd}\|\Bh u_h\|_{L^p(\Omega_{\mathrm{ext}})}.
	\]
	Combining this estimate with \eqref{eq:Bh-inverse-ineq} proves
	\eqref{eq:discrete-ref-stability}.
\end{proof}

The energy \eqref{eq:mod-energy} will be the starting point for the augmented Lagrangian scheme, while Proposition~\ref{prop:coercivity-modified} will be used later to compare the modified solution with the standard finite element approximation.

%%%%%%%%%%%%%%%%%
	\subsection{Discrete augmented Lagrangian formulation}
%%%%%%%%%%%%%%%%%
In analogy with the continuous formulation, we define the augmented Lagrangian associated with the projected discrete problem in the finite-dimensional space \(\mathbb{W}_h=V_h\times L_h\times L_h\), namely
\begin{equation}\label{eq:augmented-Lagrangian}
\begin{split}
	\mathcal{L}_r(v_h, \mathbf{q}_h, \czeta_h)
	=& \frac{1}{p}\int_{\Omega_{\mathrm{ext}}} |\mathbf{q}_h|^p \, dx
	- \langle f, v_h \rangle \\
	& + \int_{\Omega_{\mathrm{ext}}}\czeta_h \cdot ( \Bh v_h - \mathbf{q}_h) \, dx
	+ \frac{r}{2} \|\Bh v_h - \mathbf{q}_h\|_{L^2(\Omega_{\mathrm{ext}}})^{2}.
\end{split}     
\end{equation} 
We note that the third term above can  equivalently be written as $\int_{\mathbb{R}^{d}} \czeta_h \cdot ( \nabla^s v_h - \mathbf{q}_h) \, dx$.
In the finite-dimensional setting, the following analogue of Proposition~\ref{Equivalencia de los problemas y punto silla} holds.

\begin{proposition}
	\label{Equivalencia de los problemas y punto silla finito dimensional}
	Let \(r > 0\). Then, any \((u_h, \mathbf{w}_h, \clambda_h) \in \mathbb{W}_h\) is a critical point of \(\mathcal{L}_r\) if and only if \(u_h\) satisfies \eqref{eq:mod-discrete-sol} and \(\mathbf{w}_h\) and \( \clambda_{h} \) satisfy
    % , for all \(T_j \in \widetilde{\mathcal{T}}_h\),
	\begin{equation}
		\begin{aligned}
			\mathbf{w}_h &= \Bh u_h
		 \qquad
			\clambda_h &= \left| \Bh u_h \right|^{p-2} \Bh u_h.
		\end{aligned}
	\end{equation}
\end{proposition}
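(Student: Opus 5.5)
We compute the first variations of \(\mathcal{L}_r\) at \((u_h,\mathbf{w}_h,\clambda_h)\) in each of its three arguments and show that the three resulting stationarity equations are equivalent to the claimed system.

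Since \(\mathbb{W}_h\) is finite dimensional and \(\mathcal{L}_r\) is differentiable, criticality means these three directional derivatives vanish in all directions. Differentiability holds for \(p>1\), because \(\mathbf{q}\mapsto|\mathbf{q}|^p/p\) is \(C^1\) with derivative \(|\mathbf{q}|^{p-2}\mathbf{q}\).

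\textbf{Step 1: variation in \(\czeta_h\).} Differentiating with respect to \(\czeta_h\) in a direction \(\boldsymbol{\eta}_h\in L_h\) gives
\[
\int_{\Omega_{\mathrm{ext}}}\boldsymbol{\eta}_h\cdot(\Bh u_h-\mathbf{w}_h)\,dx=0 \qquad \forall\, \boldsymbol{\eta}_h\in L_h.
\]
Both \(\Bh u_h\) and \(\mathbf{w}_h\) belong to \(L_h\), so we may choose \(\boldsymbol{\eta}_h=\Bh u_h-\mathbf{w}_h\). This yields \(\mathbf{w}_h=\Bh u_h\).

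\textbf{Step 2: variation in \(\mathbf{q}_h\).} Differentiating with respect to \(\mathbf{q}_h\) in a direction \(\mathbf{p}_h\in L_h\) gives
\[
\int_{\Omega_{\mathrm{ext}}}\bigl(|\mathbf{w}_h|^{p-2}\mathbf{w}_h-\clambda_h-r(\Bh u_h-\mathbf{w}_h)\bigr)\cdot\mathbf{p}_h\,dx=0 \qquad \forall\,\mathbf{p}_h\in L_h.
\]
By Step 1 the penalty term vanishes. The function \(|\mathbf{w}_h|^{p-2}\mathbf{w}_h\) is piecewise constant, hence lies in \(L_h\). Testing elementwise with \(\mathbf{p}_h=\mathbf{e}_i\chi_T\) therefore gives \(\clambda_h=|\Bh u_h|^{p-2}\Bh u_h\).

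\textbf{Step 3: variation in \(v_h\).} Differentiating with respect to \(v_h\) in a direction \(\phi_h\in V_h\), and using linearity of \(\Bh\), gives
\[
\int_{\Omega_{\mathrm{ext}}}\clambda_h\cdot\Bh\phi_h\,dx+r\int_{\Omega_{\mathrm{ext}}}(\Bh u_h-\mathbf{w}_h)\cdot\Bh\phi_h\,dx=\langle f,\phi_h\rangle \qquad \forall\,\phi_h\in V_h.
\]
Inserting Steps 1 and 2, the penalty term drops out and this is exactly \eqref{eq:mod-discrete-sol}.

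\textbf{Converse.} Suppose \(u_h\) solves \eqref{eq:mod-discrete-sol}, and define \(\mathbf{w}_h:=\Bh u_h\) and \(\clambda_h:=|\Bh u_h|^{p-2}\Bh u_h\). Both lie in \(L_h\), since \(\Bh\) maps into \(L_h\) and the nonlinearity acts pointwise on piecewise constants. Substituting these into the three variation identities shows that each one vanishes identically, so the triple is a critical point.

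The only delicate point is justifying that the elementwise identities can be extracted. This is where it matters that the nonlinearity \(|\mathbf{q}|^{p-2}\mathbf{q}\) preserves \(L_h\), so the discrete multiplier equation holds pointwise rather than only after the projection \(\vPi_h\). We also need the remark that \(\int\czeta_h\cdot\Bh v_h=\int\czeta_h\cdot\grads v_h\), which is needed only to reconcile the two forms of the Lagrangian. Everything else is routine.
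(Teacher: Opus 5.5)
Your proof is correct and follows the paper's argument. You compute the three first variations, use $\Bh u_h-\mathbf{w}_h\in L_h$ in the multiplier equation to get $\mathbf{w}_h=\Bh u_h$, read off $\clambda_h=|\Bh u_h|^{p-2}\Bh u_h$ from the $\mathbf{q}_h$-equation, and substitute both into the $v_h$-equation; the only additions are that you spell out the converse direction explicitly, which the paper leaves implicit, and take the variations in a different order.
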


\begin{proof}
	The Gateaux derivatives of \(\mathcal{L}_r\) with respect to \(v_h\) and \(\mathbf{q}_h\) at $(v_h, \mathbf{q}_h, \czeta_h)$ are given by
	\begin{align*}
		\frac{\partial \mathcal{L}_r }{\partial v_h} 
        % \delta \mathcal{L}_r 
        (v_h, \mathbf{q}_h, \czeta_h ; \phi_h) &= - \langle f, \phi_h \rangle + \int_{\mathbb{R}^{d}} \left[ \czeta_h + r (\Bh v_h - \mathbf{q}_h) \right]
        \cdot \Bh\phi_h \, dx  \\
		\frac{\partial \mathcal{L}_r }{\partial \mathbf{q}_h} (v_h, \mathbf{q}_h, \czeta_h ; \mathbf{\Phi}_h) &= \int_{\mathbb{R}^{d}} \left[ |\mathbf{q}_h|^{p-2} \mathbf{q}_h - \czeta_h - r (\Bh v_h - \mathbf{q}_h) \right]  \cdot \mathbf{\Phi}_h \, dx .
	\end{align*}
	Using the fact that \((u_h, \mathbf{w}_h, \clambda_h)\) is a critical point of \(\mathcal{L}_r\), we obtain, for all \(\phi_h \in V_h\) and \(\mathbf{\Phi}_h \in L_h\),
	\begin{align}
		\label{eq: met-iter-u}
		r \int_{\mathbb{R}^{d}} \Bh u_h \cdot \Bh\phi_h \, dx &= \langle f, \phi_h \rangle + \int_{\R^d} (r \mathbf{w}_h - \clambda_h) \cdot \Bh\phi_h \, dx, \\
		\label{eq: met-iter-w}
		\int_{\mathbb{R}^{d}} (|\mathbf{w}_h|^{p-2} + r) \mathbf{w}_h \cdot \mathbf{\Phi}_h \, dx &= \int_{\R^d} (\clambda_h + r \Bh u_h ) \cdot \mathbf{\Phi}_h \, dx.
	\end{align}
	Additionally, since the first variation of \(\mathcal{L}_r\) at \((u_h, \mathbf{w}_h, \clambda_h)\) with respect to the Lagrange multiplier \(\czeta_h\) vanishes, we immediately obtain
	\begin{equation}
		\label{eq: derv-multp}
		\int_{\mathbb{R}^{d}} \mathbf{\Phi}_h \cdot \Bh u_h \, dx
		=
		\int_{\mathbb{R}^{d}} \mathbf{\Phi}_h \cdot \mathbf{w}_h \, dx,
		\quad \forall \mathbf{\Phi}_h \in L_h.
	\end{equation}
	Since \(\Bh u_h-\mathbf{w}_h\in L_h\), equation \eqref{eq: derv-multp} gives
	% \begin{equation}
	% 	\label{eq: conver-w}
		$\mathbf{w}_h=\Bh u_h.$
	% \end{equation}
	Substituting this identity in equation \eqref{eq: met-iter-w}, we deduce
	$
	\clambda_h=
	\left|\mathbf{w}_h\right|^{p-2}\mathbf{w}_h.
	$
	The conclusion then follows by substituting the expressions for \(\mathbf{w}_h\) and \(\clambda_h\) in \eqref{eq: met-iter-u}.
\end{proof}

\begin{remark} \label{rem:mala-idea}
It may seem more natural to keep the energy $J$ itself and minimize its restriction to $V_h$, which is precisely the conforming approximation of Section~\ref{sec:FE}. However, the constraint $w_h = \nabla^s u_h$ cannot be imposed exactly on $V_h \times L_h$. One may instead replace \eqref{eq:augmented-Lagrangian} by the Lagrangian
{\small \[ \widetilde{\mathcal{L}}_r(v_h, \mathbf{q}_h, \czeta_h) :=\frac{1}{p}\int_{\mathbb{R}^{d}} |\mathbf{q}_h|^p \, dx
	- \langle f, v_h \rangle + \int_{\mathbb{R}^{d}} \czeta_h \cdot ( \nabla^s v_h - \mathbf{q}_h) \, dx
	+ \frac{r}{2} \|\nabla^s v_h - \mathbf{q}_h\|_{L^2(\mathbb{R}^{d})}^{2}
\]
}\noindent
and we refer to the resulting formulation as the unprojected scheme. This introduces a different type of consistency error. Indeed, arguing as in the proof of Proposition~\ref{Equivalencia de los problemas y punto silla finito dimensional}, one can show that the saddle point \((\widetilde{u}_h, \widetilde{\mathbf{w}}_h, \widetilde{\clambda}_h) \) of $\widetilde{\mathcal{L}}_r$ satisfies
\[
\widetilde{\mathbf{w}}_h=\Bh \widetilde{u}_h,
\qquad
\widetilde{\clambda}_h
=
|\Bh \widetilde{u}_h|^{p-2}\Bh \widetilde{u}_h,
\]
and eliminating the auxiliary variables from the augmented
Lagrangian system yields that $\widetilde{u}_h$ satisfies
for all $v_h \in V_h$, 
\[
\begin{aligned}
	&\int_{\R^d} |\vPi_h\grads \widetilde{u}_h|^{p-2}\vPi_h\grads \widetilde{u}_h
	\cdot\vPi_h\grads v_h\,\mathrm{d}x
	\\
	&\qquad+r\int_{\R^d} (\grads \widetilde{u}_h-\vPi_h\grads \widetilde{u}_h) \cdot (\grads v_h-\vPi_h\grads v_h)\,\mathrm{d}x
	=\langle f,v_h\rangle
\end{aligned}
\]
for every \(v_h\in V_h\). Thus, this construction does not provide an
exact reformulation of the original Galerkin problem \eqref{eq:discrete-problem} unless 
\(p=2\) and \(r=1\). In that case, the orthogonality of \(\vPi_h\) makes the previous
equation coincide with the Galerkin formulation. It is not clear whether the consistency error introduced by this approach can be controlled to provide a converging method; we numerically explore this point in Section~\ref{sec:exp-linear} below.
\end{remark}

In view of the previous analysis, problem~\eqref{eq:mod-discrete-sol} can be characterized through its associated saddle point formulation. We next present the matrix form of the resulting scheme. Since \((u_h, \mathbf{w}_h, \clambda_h) \in \mathbb{W}_h\), we write
\begin{equation}
	\label{eq: func-finitodimen}
	u_h = \sum_{i=1}^{N} u_i \varphi_i, \quad
	\mathbf{w}_h = \sum_{i=1}^{Md} w_i \vPsi_i, \quad
	\clambda_h = \sum_{i=1}^{Md} \lambda_i \vPsi_i,
\end{equation}
where \(\{\varphi_i\}_{i=1}^N\) and \(\{\vPsi_i\}_{i=1}^{Md}\) form bases for \(V_h\) and \(L_h\), respectively. 
We introduce the vectors \(\mathbf{U} = (U_i)_{i=1}^N\), \(\mathbf{W} = (w_i)_{i=1}^{Md}\), \(\clambda = (\lambda_i)_{i=1}^{Md}\) and, to write the discrete critical point conditions in matricial form, we introduce the following:
\begin{itemize}
	\item the coupling matrix \(B \in \mathbb{R}^{N \times Md}\) with entries
	\[
	B_{ij} := \int_{\R^d} \Bh\varphi_i \cdot \vPsi_j \, dx
	= \int_{\R^d} \nabla^{s} \varphi_i \cdot \vPsi_j \, dx,
	\]
	\item the (vectorial) diagonal mass matrix \(D \in \mathbb{R}^{Md \times Md}\), whose entries are given by \(D_{ii} = \int_{\mathbb{R}^{d}} \vPsi_i \cdot \vPsi_i \, dx = |T_{(i)}|,\) where \(T_{(i)} \in \widetilde{\mathcal{T}}_h\) is the only element in which \(\vPsi_i\) is supported, namely \(\vPsi_i = \mathbf{e}_k \chi_{T_{(i)}}\) for some \(k = 1,\ldots,d\),
	\item the source vector \(\mathbf{F} \in \mathbb{R}^{N}\) with components \(f_j := \langle f, \varphi_j \rangle\).
\end{itemize}
With the notation above, the matrix associated with the bilinear form
\[
\int_{\R^d}\Bh\varphi_i\cdot\Bh\varphi_j\,dx
\]
is \(BD^{-1}B^T\). Therefore, formula \eqref{eq: met-iter-u} can be rewritten as the linear system
\begin{equation}
	\label{eq-noiter-u}
	rBD^{-1}B^T\mathbf{U}
	=
	\mathbf{F}+B\bigl(r\mathbf{W}-\clambda\bigr).
\end{equation}

To rewrite equation \eqref{eq: met-iter-w} in matrix form, we first notice that the nonlinear term therein admits the expression
\[
|\mathbf{w}_h(x)|^{p-2}\mathbf{w}_h(x)
=
\sum_{i=1}^{Md}|\mathbf{w}_{T_i}|^{p-2}w_i\vPsi_i(x).
\]
With this, \eqref{eq: met-iter-w} can be written in the nonlinear matrix form
\begin{equation}
	\label{eq-noiter-w}
	|\mathbf{W}|^{p-2} \odot \mathbf{W}+r\mathbf{W}
	=
	\clambda+rD^{-1}B^T\mathbf{U},
\end{equation}
where \(\odot\) denotes the Hadamard elementwise product, and \(|\mathbf{W}|^{p-2}\in\R^{Md}\) is the vector whose entry indexed by the element $T_j$ ($j \in \{1,\ldots, M\}$) and component $k \in \{1, \ldots, d\}$ equals $|\mathbf w_{T_j}|^{p-2}$.
% having entries of the form \(|w_i|^{p-2}\), \(i=1,\ldots,Md\). 
Finally, in matrix form identity \eqref{eq: derv-multp} can be expressed as
\begin{equation}
	\label{eq:no-iter}
	B^T\mathbf{U}=D\mathbf{W}.
\end{equation}

At this point, certain benefits of the proposed formulation become apparent. First, \(B\) is the only matrix involving nonlocal integrals that must be assembled, since the matrix associated with the bilinear form can be expressed in terms of \(B\).

Another remarkable feature of the formulation \eqref{eq-noiter-u}--\eqref{eq-noiter-w}--\eqref{eq:no-iter} is that the nonlinearity reduces to independent algebraic equations on each element, and no matrices need to be recomputed if one pursues an iterative method for the solution of the nonlinear equation.

%%%%%%%%%%%%%%%%%
\subsection{Iterative solution of the discrete system}
%%%%%%%%%%%%%%%%%
We now turn to the construction of an iterative solver aimed at computing the solution of system \eqref{eq-noiter-u}--\eqref{eq-noiter-w}--\eqref{eq:no-iter}. To this end, we adopt an Uzawa-type strategy inspired by the classical work of Glowinski and Marroco~\cite{glowinski1975approximation}.

Within the unified framework developed by Bacuta~\cite{bacuta2006unified}, Uzawa-type methods provide stable iterative procedures under appropriate structural assumptions on the constraint operator. In our setting, the injectivity of \(\Bh\) ensures the coercivity of the primal subproblem. Based on this perspective, we adopt the following iterative scheme for the approximation of the saddle point.

\begin{algorithm}
	\label{Esquema iterativo p-laplaciano}
	Let $\rho > 0$ and initial guesses \(\mathbf{W}^{0}, \clambda^{0} \in \R^{Md} \) be given. For \(n \geq 1\), compute 
	\(\{\mathbf{U}^{n}, \mathbf{W}^{n}, \clambda^{n}\} \in \R^N \times\R^{Md}\times \R^{Md} \)  by
	\begin{equation}  \label{eq-iter-mult} \begin{split}
			r BD^{-1}B^T \mathbf{U}^n &= \mathbf{F} + B (r \mathbf{W}^{n-1} - \clambda^{n-1}), \\
			|\mathbf{W}^{n}|^{p-2} \odot \mathbf{W}^{n}+ r\mathbf{W}^{n} &= \clambda^{n-1} + r D^{-1} B^T \mathbf{U}^{n}, \\
			\clambda^{n} &= \clambda^{n-1} + \rho(D^{-1} B^T \mathbf{U}^{n} - \mathbf{W}^{n}).
	\end{split} \end{equation}
\end{algorithm}

For convenience, we rewrite system~\eqref{eq-iter-mult} in finite element form. Given \(\mathbf{w}_h^{0},\boldsymbol{\lambda}_h^{0}\in L_h\), we compute, for \(n\ge1\),
\begin{equation} \label{eq:FE-alg}
	\begin{split}
		r \left(\Bh u_h^n,\Bh v_h\right)
		& =\langle f,v_h\rangle+ \left( r\mathbf{w}_h^{n-1}-\boldsymbol{\lambda}_h^{n-1},\Bh v_h\right)      \qquad\forall v_h\in V_h,\\
		|\mathbf{w}_h^n|^{p-2}\mathbf{w}_h^n+r\mathbf{w}_h^n
		& =\boldsymbol{\lambda}_h^{n-1}+r\,\Bh u_h^n,  \\ 
		\boldsymbol{\lambda}_h^n & =\boldsymbol{\lambda}_h^{n-1}
		+\rho\left(\Bh u_h^n-\mathbf{w}_h^n\right).
	\end{split}
\end{equation}
The first step above is well-defined because the injectivity of \(\Bh\) implies that the bilinear form \((\Bh u_h,\Bh v_h)\) is coercive on \(V_h\). The other two steps are computed elementwise, and we note that the map \(\mathbf{w}\mapsto |\mathbf{w}|^{p-2}\mathbf{w}+r\mathbf{w}\) is bijective from \(\R^d\) to \(\R^d\).

We note that, since \((u_h,\mathbf{w}_h,\boldsymbol{\lambda}_h)\in\mathbb{W}_h\) is a saddle point of the discrete augmented Lagrangian and
\(\mathbf{w}_h=\Bh u_h\), they satisfy
\begin{equation} \label{eq:FE-conv}
	\begin{split}
		r \left(\Bh u_h,\Bh v_h\right) 
		&=\langle f, v_h\rangle
		+ \left( r\mathbf{w}_h -\boldsymbol{\lambda}_h,\Bh v_h\right)      \qquad\forall v_h\in V_h, \\
		|\mathbf{w}_h|^{p-2}\mathbf{w}_h +r\mathbf{w}_h  & =\boldsymbol{\lambda}_h +r\,\Bh u_h, \\
		\boldsymbol{\lambda}_h& =\boldsymbol{\lambda}_h+\rho\left(\Bh u_h-\mathbf{w}_h\right).
	\end{split}
\end{equation}

The following theorem ensures the convergence of the method. Its proof follows directly from \cite[Chapter VI, Theorem 5.1]{glowinski2013numerical}.

\begin{theorem} \label{thm:convergencia-iteracion}
	Let \(0<\rho<\frac{1+\sqrt{5}}{2}r\) and let \((\mathbf{W}^0,\boldsymbol{\lambda}^0)\in\mathbb{R}^{Md}\times\mathbb{R}^{Md}\) be arbitrary. Then, the iterates \((\mathbf{U}^n,\mathbf{W}^n,\boldsymbol{\lambda}^n)\) generated by Algorithm~\ref{Esquema iterativo p-laplaciano} converge to the discrete saddle point. Namely, for the associated finite element functions  
	\((u_h^n,\mathbf{w}_h^n,\clambda_h^n)\in\mathbb{W}_h\), we have
	\[
	u_h^n\to u_h,\qquad 
	\mathbf{w}_h^n\to \mathbf{w}_h,\qquad 
	\boldsymbol{\lambda}_h^n\to\boldsymbol{\lambda}_h,
	\]
	where \((u_h,\mathbf{w}_h,\boldsymbol{\lambda}_h)\) is the unique discrete saddle point characterized by Proposition~\ref{Equivalencia de los problemas y punto silla finito dimensional}.
\end{theorem}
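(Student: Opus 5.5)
We plan to follow the classical Glowinski--Marroco argument for ALG2, tracking errors relative to the saddle point characterized in Proposition~\ref{Equivalencia de los problemas y punto silla finito dimensional}. Set $\bar u^n := u_h^n - u_h$, $\bar{\mathbf w}^n := \mathbf w_h^n - \mathbf w_h$ and $\bar\clambda^n := \clambda_h^n - \clambda_h$. Subtracting \eqref{eq:FE-conv} from \eqref{eq:FE-alg} gives three error equations:
\[
r(\Bh\bar u^n,\Bh v_h) = (r\bar{\mathbf w}^{n-1}-\bar\clambda^{n-1},\Bh v_h)\ \ \forall v_h\in V_h,
\]
\[
\mathbf{m}(\mathbf w_h^n)-\mathbf{m}(\mathbf w_h) + r\bar{\mathbf w}^n = \bar\clambda^{n-1}+r\Bh\bar u^n,
\]
\[
\bar\clambda^n=\bar\clambda^{n-1}+\rho(\Bh\bar u^n-\bar{\mathbf w}^n),
\]
where $\mathbf{m}(\mathbf q)=|\mathbf q|^{p-2}\mathbf q$. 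All quantities lie in the finite-dimensional spaces $V_h$, $L_h$, so we may use $L^2(\Omega_{\mathrm{ext}})$ inner products throughout.

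\textbf{Step 1 (energy identity).} Test the first equation with $v_h=\bar u^n$. Take the $L^2$ product of the second with $\bar{\mathbf w}^n$, and also with $\Bh\bar u^n - \bar{\mathbf w}^n$ at steps $n$ and $n-1$; the latter is needed to handle the cross term, as in the proof of \cite[Ch.~VI, Thm.~5.1]{glowinski2013numerical}. Then square the multiplier update. Adding these relations should yield, for $0<\rho<\frac{1+\sqrt5}{2}r$, a Lyapunov inequality of the form
\[
E_{n-1}-E_n \ge c\Big[ (\mathbf m(\mathbf w_h^n)-\mathbf m(\mathbf w_h),\bar{\mathbf w}^n) + \|\Bh\bar u^n-\bar{\mathbf w}^n\|_{L^2}^2 + \|\bar{\mathbf w}^n-\bar{\mathbf w}^{n-1}\|_{L^2}^2\Big],
\]
with $E_n:=\rho^{-1}\|\bar\clambda^n\|^2_{L^2}+r\|\bar{\mathbf w}^n\|_{L^2}^2+\kappa\|\Bh\bar u^n-\bar{\mathbf w}^n\|_{L^2}^2$, for some $\kappa\ge0$ and $c>0$ depending on $\rho/r$. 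The golden-ratio bound is precisely what makes the quadratic form in the cross terms positive definite. This is the step we expect to be the main obstacle. We would reproduce Glowinski's algebra verbatim, since it uses only monotonicity of $\mathbf m$ and the Hilbert structure of $L_h$, never the specific operator $\Bh$. The only change is to replace $\nabla$ by $\Bh$, which is legitimate because $\Bh V_h\subset L_h$ and the constraint is imposed exactly in $L_h$.

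\textbf{Step 2 (conclusion).} Since $E_n$ is nonincreasing and nonnegative, the bracketed terms are summable and hence tend to zero. Strict monotonicity of $\mathbf m$ is given by Proposition~\ref{prop:coercivity-modified} at the pointwise level: elementwise it gives $(\mathbf m(\mathbf a)-\mathbf m(\mathbf b))\cdot(\mathbf a-\mathbf b)\ge0$, with equality only if $\mathbf a=\mathbf b$. Combined with boundedness of the iterates from $E_n$ and compactness in finite dimensions, this forces $\bar{\mathbf w}^n\to0$. Then $\Bh\bar u^n\to0$, and the inverse inequality \eqref{eq:Bh-inverse-ineq}, valid for fixed $h$, gives $u_h^n\to u_h$.

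For the multiplier, passing to the limit along a subsequence in the second error equation shows that any cluster point $\bar\clambda^*$ satisfies $\bar\clambda^*=0$. Indeed, the left-hand side tends to $0$ and $r\Bh\bar u^n\to0$. Since $\{\bar\clambda^n\}$ is bounded, the whole sequence converges to $0$. Uniqueness of the limit follows from the uniqueness of the saddle point established in Propositions~\ref{prop:wellposed-mod} and \ref{Equivalencia de los problemas y punto silla finito dimensional}.
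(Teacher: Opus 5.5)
Your proposal is correct and follows the paper, which proves the theorem simply by invoking \cite[Chapter VI, Theorem 5.1]{glowinski2013numerical}. Your Step 2 adds a useful account of why that abstract result gives convergence of all three variables here: strict pointwise monotonicity of $\mathbf q\mapsto|\mathbf q|^{p-2}\mathbf q$, injectivity of $\Bh$ in finite dimensions, and the multiplier read off directly from the $\mathbf w$-error equation, so the subsequence argument is not even needed. One correction to Step 1: the cross term $r(\Bh\bar u^n-\bar{\mathbf w}^n,\mathbf w_h^n-\mathbf w_h^{n-1})$ is controlled by subtracting the $\mathbf w$-equations at steps $n$ and $n-1$, using $\clambda_h^{n-1}-\clambda_h^{n-2}=\rho(\Bh u_h^{n-1}-\mathbf w_h^{n-1})$, and testing with $\mathbf w_h^n-\mathbf w_h^{n-1}$ rather than with the residual; this requires $n\ge 2$, because $\mathbf W^0$ is arbitrary.
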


%%%%%%%%%%%%%%%%%
\subsection{Extension to variable diffusivity} \label{sec:variable-diffusivity}
%%%%%%%%%%%%%%%%%
We close this section by observing that the scheme accommodates a space-dependent diffusivity with almost no modification. This class of operators is specific to the present setting: for the fractional $p$-Laplacian associated with real interpolation, the energy is a double integral and a variable coefficient necessarily enters as a two-point function $A(x,y)$, whereas the energy $\int_{\R^d}|\nabla^su|^pdx$ associated with the
Riesz fractional gradient admits a \emph{pointwise} coefficient. In this sense variable diffusivity is a local property of the Bessel $(p,s)$-Laplacian.

Let $A\in L^\infty(\R^d)$ satisfy $0<a_0\le A\le a_1<\infty$ a.e., and consider 
\begin{equation}\label{eq:variable-problem}
 \left\{
\begin{aligned}
	-\operatorname{div}^{s}\left(A(x)|\grads u|^{p-2} \grads u\right) &= f  \ \text{ in } \Omega, \\
	u &= 0  \ \text{ in } \Omega^{c},
\end{aligned}
\right.\end{equation}
Since $a_0\|v\|^p_{\widetilde X^{s,p}(\Omega)}\le\int_{\R^d}A|\nabla^sv|^pdx\le a_1\|v\|^p_{\widetilde X^{s,p}(\Omega)}$, the associated energy is equivalent to that of \eqref{eq:Dirichlet-problem} and the well-posedness of \eqref{eq:variable-problem} follows as in Section \ref{sec:Preliminares}. The degenerate case $a_0=0$ calls for a different functional framework; we refer to the weighted fractional Sobolev spaces introduced by García-Sáez \cite{garcia2025fractional}, which accommodate diffusivities in the Muckenhoupt class.

For simplicity, we assume $A$ is piecewise constant on $\widetilde{\mathcal T}_h$, and write $A_T:=A|_T$ for $T \in \widetilde{\mathcal T}_h$. Replacing $|\mathbf q_h|^p$ by $A|\mathbf q_h|^p$ in \eqref{eq:augmented-Lagrangian} and leaving the augmentation term unweighted, the derivative with respect to $v_h$ is unchanged. Thus, the primal equation \eqref{eq: met-iter-u} is unaffected; only the algebraic relation for
$\mathbf w_h$ becomes
\[
  A |\mathbf w_h|^{p-2}\mathbf w_h+r\,\mathbf w_h  =\boldsymbol\lambda_h+r\,\mathbf B_hu_h,
  \qquad\text{whence}\qquad  \boldsymbol\lambda_h=A\,|\mathbf B_hu_h|^{p-2}\mathbf B_hu_h .
\]
Let $D_A\in\R^{Md\times Md}$ be the diagonal matrix whose entry indexed by the element $T_j$ and the component $k\in\{1,\dots,d\}$ equals $A_{T_j}$. Then, Algorithm~\ref{Esquema iterativo p-laplaciano} can be modified as follows.

	\begin{algorithm} \label{alg:variable-diff}
    Let $\rho > 0$ and initial guesses \(\mathbf{W}^{0}, \clambda^{0} \in \R^{Md} \) be given. For \(n \geq 1\), compute 
	\(\{\mathbf{U}^{n}, \mathbf{W}^{n}, \clambda^{n}\} \in \R^N \times\R^{Md}\times \R^{Md} \) by
	\begin{equation}  \label{eq-iter-mult-diff} \begin{split}
			r BD^{-1}B^T \mathbf{U}^n &= \mathbf{F} + B (r \mathbf{W}^{n-1} - \clambda^{n-1}), \\
			  D_A|\mathbf{W}^{n}|^{p-2} \odot \mathbf{W}^{n}+ r\mathbf{W}^{n} &= \clambda^{n-1} + r D^{-1} B^T \mathbf{U}^{n}, \\
			\clambda^{n} &= \clambda^{n-1} + \rho(D^{-1} B^T \mathbf{U}^{n} - \mathbf{W}^{n}).
	\end{split} \end{equation}
		\end{algorithm}

As in the constant diffusivity case, the second step in \eqref{eq-iter-mult-diff} is a collection of independent $d$-dimensional algebraic problems.

\begin{remark}\label{rem:variable}
All the results of Sections \ref{sec:Preliminares} and \ref{sec:AugmentedFormulation} remain valid for \eqref{eq:variable-problem}: each of them rests on the pointwise monotonicity
and continuity inequalities for $\mathbf q\mapsto|\mathbf q|^{p-2}\mathbf q$, which are simply multiplied by $A(x)\in[a_0,a_1]$. Thus $\alpha$ and $\gamma$ are replaced by $a_0\alpha$ and $a_1\gamma$, and Theorem \ref{thm:convergencia-iteracion} is unchanged. We emphasize that $A$ enters \eqref{eq-iter-mult} only through the diagonal matrix $D_A$; the coupling matrix $B$, which is the only object requiring nonlocal integrals and the only dense one, is independent of $A$  and may be assembled and factored once and reused across many diffusivities. A direct treatment of \eqref{eq:variable-problem} would instead place $A$ inside the nonlocal integrals defining \eqref{eq:def-Ku}, requiring a new dense matrix for each coefficient.
\end{remark}

The assumption that $A$ be piecewise constant is made only so that $D_A$ represents $A$ exactly; for general $A$ one should replace it by its piecewise constant $L^2$-projection, at the price of an additional consistency term that we do not pursue. Moreover, while the regularity estimates from \cite{BDPR25+reg} 
are likely to be adapted to diffusivities with a certain H\"older regularity, we do not expect the argument from \cite{BDPR25+reg}  to carry to a piecewise constant $A$. Thus, even if the abstract results of Section~\ref{sec:convergence} below also remain valid in the variable diffusivity setting, explicit error bounds are not available.

Additionally, in Section \ref{sec:NumericalExp} below we numerically explore for $p=2$ the configuration $A=\chi_\Omega$, in which $a_0=0$ and falls outside the hypotheses above. While this may have resemblances to the Dirichlet problem for the censored fractional Laplacian \cite{bogdan2003censored},
in which the energy stems from the Sobolev seminorm $| \cdot |_{H^s(\Omega)}$, this is a different operator, in which the $L^2(\Omega)$-norm of the $s$-gradient appears. Our numerical experiments indicate that the boundary behavior of solutions is different to the one for the fractional Laplacian.

%%%%%%%%%%%%%%%%%
\section{Convergence of the discrete augmented Lagrangian formulation}
\label{sec:convergence}
%%%%%%%%%%%%%%%%%%%%%%%%%%%%%

Our next purpose is to estimate the consistency error introduced by replacing \(\grads\) with \(\Bh\) and truncating the integration domain to \(\Omega_{\mathrm{ext}}\). This will allow us to compare \(u_h\) with \(u\) and, using the convergence of the standard finite element approximation, establish the convergence of the projected scheme.

As a first step, we introduce the projection defect
\begin{equation}\label{eq:projection-defect}
	\delta_h(v):=\|(I-\vPi_h)\grads v\|_{L^p(\Omega_{\mathrm{ext}};\R^d)},
	\qquad v\in\Xsp.
\end{equation}
As expected, for functions smoother than $\Xsp$, the projection defect tends to zero with a rate depending on their regularity.

\begin{Lemma}\label{lem:projection-defect}
Let $\sigma\in(s,s+1)$ and let $v\in\dot B^{\sigma}_{p,\infty}(\Omega)$.
Assume $\widetilde{\mathcal T}_h$ is shape-regular and quasi-uniform with mesh
size $h$. Then, there exists $C=C(d,s,p,\gamma_0) >0$ such that
\begin{equation}\label{eq:projection-defect-bound}
  \delta_h(v)\ \le\ C\,h^{\sigma-s}\,\|v\|_{\dot B^{\sigma}_{p,\infty}(\Omega)}.
\end{equation}

In particular, under the hypotheses of Theorem \ref{thm:regularity}, for the weak
solution $u$ of \eqref{eq:Dirichlet-problem} we have
\begin{equation}\label{eq:proj-def-u} 
   \delta_h(u)\ \lesssim\ h^{\beta},
  \qquad
\beta=\left\lbrace\begin{aligned}&\min\{s,\tfrac12\}, & 1<p\le2,\\
  &\min\{\tfrac{s}{p-1},\tfrac1p\}, & 2\le p<\infty,\end{aligned}  \right.
\end{equation}
with hidden constant independent of $h$ and $R_{\mathrm{ext}}$.
\end{Lemma}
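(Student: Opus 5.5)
The core idea is that $\grads v$ inherits Besov regularity $\sigma-s$ from $v$, and the piecewise-constant projection $\vPi_h$ approximates Besov functions of smoothness in $(0,1)$ at the rate $h^{\sigma-s}$.

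\textbf{Step 1 (fractional gradient lowers smoothness by $s$).} Since $\grads$ is a Fourier multiplier homogeneous of degree $s$ (equivalently $\grads = I_{1-s}\nabla$ by Proposition~\ref{prop-grad-riesz-from}), it maps $\dot B^{\sigma}_{p,\infty}(\R^d)$ boundedly into $\dot B^{\sigma-s}_{p,\infty}(\R^d;\R^d)$. For a Littlewood--Paley piece $\Delta_j v$, the multiplier $i\xi|\xi|^{s-1}$ localized to $|\xi|\sim 2^j$ has $L^p$ operator norm $\lesssim 2^{js}$, uniformly in $j$. This gives
\[
|\grads v|_{\dot B^{\sigma-s}_{p,\infty}(\R^d)}\lesssim |v|_{\dot B^{\sigma}_{p,\infty}(\R^d)}.
\]
Because $0<\sigma-s<1$, this seminorm is equivalent to the first-order difference characterization $\sup_{t>0} t^{-(\sigma-s)}\sup_{|y|\le t}\|\tau_y g-g\|_{L^p(\R^d)}$.

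\textbf{Step 2 (elementwise projection error).} On each $T\in\widetilde{\mathcal T}_h$, write
\[
g-\vPi_h g=\frac1{|T|}\int_T\bigl(g(x)-g(z)\bigr)\,dz .
\]
Jensen's inequality then gives
\[
\|g-\vPi_h g\|_{L^p(T)}^p\le |T|^{-1}\int_T\int_T|g(x)-g(z)|^p\,dz\,dx .
\]
Substitute $z=x+y$ with $|y|\le h_T\le h$ and sum over $T$; shape regularity and quasi-uniformity give $|T|^{-1}\lesssim h^{-d}$. This yields
\[
\delta_h(v)^p\lesssim h^{-d}\int_{|y|\le h}\|\tau_y g-g\|_{L^p}^p\,dy\le \sup_{|y|\le h}\|\tau_yg-g\|_{L^p}^p\lesssim h^{(\sigma-s)p}|g|^p_{\dot B^{\sigma-s}_{p,\infty}}.
\]
Here the $L^p$ norms over $\Omega_{\mathrm{ext}}$ are bounded by those over $\R^d$. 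This is why the constant does not depend on $R_{\mathrm{ext}}$. Combining with Step 1 proves \eqref{eq:projection-defect-bound}.

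\textbf{Step 3 (application to the solution $u$).} Insert the exponents from Theorem~\ref{thm:regularity}.
\begin{itemize}
\item For $p\ge2$, take $\sigma=s+\frac{s}{p-1}$ when $s\le\frac{p-1}{p}$ and $\sigma=s+\frac1p$ otherwise. In both cases $\sigma-s=\min\{\frac{s}{p-1},\frac1p\}$, and $\sigma<s+1$ holds.
\item For $1<p\le2$, take $\sigma=2s$ when $s\le\frac12$ and $\sigma=s+\frac12$ otherwise. Then $\sigma-s=\min\{s,\frac12\}$.
\end{itemize}
The norm of $u$ is bounded by data-dependent constants, so $\delta_h(u)\lesssim h^{\beta}$.

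\textbf{Main obstacle.} The delicate point is Step 1. We need the homogeneous Besov mapping property for the nonlocal operator $\grads$ on all of $\R^d$, with $v$ only supported in $\overline\Omega$. We also need the equivalence of the Littlewood--Paley and difference seminorms at the endpoint $q=\infty$. I would justify both through the standard multiplier estimate on dyadic blocks, which is exactly where $p\in(1,\infty)$ and $\sigma-s\in(0,1)$ are used.
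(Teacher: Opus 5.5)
Your proposal is correct and follows essentially the same route as the paper: the Littlewood--Paley bound $|\nabla^s v|_{\dot B^{\sigma-s}_{p,\infty}(\R^d)}\lesssim \|v\|_{\dot B^{\sigma}_{p,\infty}(\Omega)}$, then elementwise Jensen plus a change of variables to a translation, which reaches the $L^p$ modulus of continuity of $\nabla^s v$ (with norms on $\Omega_{\mathrm{ext}}$ bounded by norms on $\R^d$). It then uses the first-difference characterization for smoothness $\sigma-s\in(0,1)$ and reads off $\beta$ from Theorem~\ref{thm:regularity}. One small correction to your closing remark: the dyadic multiplier bound in Step~1 holds for every $1\le p\le\infty$, since the frequency-localized kernels have $L^1$ norms $\lesssim 2^{js}$ after rescaling, so the restriction $p\in(1,\infty)$ is not what makes that step work.
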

\begin{proof}
Each component of $\nabla^s$ is the Fourier multiplier $\xi\mapsto 2\pi i \xi_j|2\pi\xi|^{s-1}$, which is homogeneous of degree $s$ and smooth on $\R^d\setminus\{0\}$. A standard Littlewood--Paley argument therefore yields the boundedness of the map $\nabla^s:\dot B^{\sigma}_{p,\infty}(\R^d)\to \dot B^{\sigma-s}_{p,\infty}(\R^d;\R^d)$, namely
\[
  |\nabla^s v|_{\dot B^{\sigma-s}_{p,\infty}(\R^d)}
  \le C\,\|v\|_{\dot B^{\sigma}_{p,\infty}(\Omega)} .
\]

Next, an application of Jensen's inequality on each $T\in\widetilde{\mathcal T}_h$ yields
\[
  \|\nabla^s v-\mathbf\Pi_h\nabla^s v\|^p_{L^p(T)}  \le\frac1{|T|}\int_T \int_T|\nabla^s v(x)-\nabla^s v(y)|^p\,dy\,dx .
\]
Summing over $T$, using $|T|\ge c(\gamma_0)h^{d}$ by shape regularity and
quasi-uniformity, and substituting $z=y-x$ (so that $|z|\le h$ whenever
$x,y\in T$),
\[ \begin{split}
  \delta_h(v)^p  & \le C h^{-d} \int_{|z|\le h}\sum_{T}\int_{T}|\nabla^s v(x+z)-\nabla^s v(x)|^p dx\,dz \\
&  \le C\,\omega(\nabla^s v,h)_{L^p(\R^d)}^p ,
\end{split} \]
where $\omega(\cdot,t)_{L^p}$ denotes the $L^p$ modulus of continuity. Finally, since $|\nabla^s v|_{\dot B^{\sigma-s}_{p,\infty}(\R^d)} \simeq \sup_{t>0}t^{-\sigma+s}\omega(\nabla^s v,t)_{L^p(\R^d)}$
for $\sigma-s \in(0,1)$, see \cite[Ch. V]{stein1970singular}, we obtain
\[
\omega(\nabla^s v,h)_{L^p(\R^d)}\le h^{\sigma-s}|\nabla^s v|_{\dot B^{\sigma-s}_{p,\infty}(\R^d)} \le C h^{\sigma-s}\|v\|_{\dot B^{\sigma}_{p,\infty}(\Omega)},
\]
and \eqref{eq:projection-defect-bound} follows.

Let $u$ be the weak solution to \eqref{eq:Dirichlet-problem}. Then, Theorem \ref{thm:regularity} gives $u\in\dot B^{s+\beta}_{p,\infty}(\Omega)$ in
each of the four regimes, with $\beta\le\frac12$ and \eqref{eq:proj-def-u} follows from \eqref{eq:projection-defect-bound}.
\end{proof}

%%%%%%%%%%%%%%%%%%%%%%%%%%%%%%%%%%%%%%%%%%%%%%%%%%%
\subsection{Consistency of the projected operator} \label{sec:consistency}
%%%%%%%%%%%%%%%%%%%%%%%%%%%%%%%%%%%%%%%%%%%%%%%%%%%

We study the consistency error of the projected operator \(\mathcal{A}_h\), defined in \eqref{eq: def-A-mod}, with the original operator \(\mathcal{A}\), defined in \eqref{eq:def-A}. We introduce the corresponding consistency residual
\begin{equation}\label{eq:consistency-residual}
	\mathcal{R}_h(v_h;\varphi_h):=\langle\mathcal{A}_h v_h-\mathcal{A}v_h,\varphi_h\rangle,
	\qquad v_h,\varphi_h\in V_h.
\end{equation}

We first decompose this residual into the contributions arising from the projection and the truncation of the computational domain. Since \(|\Bh v_h|^{p-2}\Bh v_h\in L_h\), we can decompose the consistency residual into 
\begin{equation}\label{eq:residual-decomposition}
	\mathcal{R}_h(v_h;\varphi_h)=\mathcal{R}_h^{\mathrm{proj}}(v_h;\varphi_h)-\mathcal{R}_h^{\mathrm{tail}}(v_h;\varphi_h),
\end{equation}
where
\begin{align}
	\label{eq:projection-residual}
	\mathcal{R}_h^{\mathrm{proj}}(v_h;\varphi_h)&:=\int_{\Omega_{\mathrm{ext}}}\left(
	|\Bh v_h|^{p-2}\Bh v_h-|\grads v_h|^{p-2}\grads v_h
	\right)\cdot\grads\varphi_h\,\mathrm{d}x\\
	\label{eq:tail-residual}
	\mathcal{R}_h^{\mathrm{tail}}(v_h;\varphi_h)&:=\int_{\Omega_{\mathrm{ext}}^c}
	|\grads v_h|^{p-2}\grads v_h
	\cdot\grads\varphi_h\,\mathrm{d}x.
\end{align} 
The first term measures the error introduced by the projection onto \(L_h\), while the second accounts for the contribution outside the computational domain.
The following lemma provides a bound for the former.
\begin{Lemma}\label{lem:projection-residual}

For every \(v_h,\varphi_h\in V_h\),
	\begin{equation}
		|\mathcal R_h^{\mathrm{proj}}(v_h;\varphi_h)|
		\leq \left\lbrace
		\begin{aligned}
			& c_p\,\delta_h(v_h)^{p-1}\|\varphi_h\|_{\Xsp},
			&1<p\leq2,\\
			& c_p\,\delta_h(v_h)
			\|v_h\|_{\Xsp}^{p-2}
			\|\varphi_h\|_{\Xsp},
			&2\leq p<\infty.
		\end{aligned} \right.
	\end{equation}
\end{Lemma}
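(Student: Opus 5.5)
The argument is pointwise. Set $\mathbf a:=\Bh v_h=\vPi_h\grads v_h$ and $\mathbf b:=\grads v_h$ on $\Omega_{\mathrm{ext}}$, and apply Hölder's inequality with exponents $p'$ and $p$:
\[
|\mathcal R_h^{\mathrm{proj}}(v_h;\varphi_h)|\le \big\||\mathbf a|^{p-2}\mathbf a-|\mathbf b|^{p-2}\mathbf b\big\|_{L^{p'}(\Omega_{\mathrm{ext}})}\,\|\grads\varphi_h\|_{L^p(\Omega_{\mathrm{ext}})}.
\]
The last factor is bounded by $\|\varphi_h\|_{\Xsp}$ by \eqref{eq-def-norm-LC-doms}. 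It therefore suffices to control the $L^{p'}$ norm of the difference of the nonlinear fluxes in terms of $\|\mathbf a-\mathbf b\|_{L^p(\Omega_{\mathrm{ext}})}=\delta_h(v_h)$. For this we use the elementary vector inequalities behind Proposition~\ref{prop:elipticity-continuity}.

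\textbf{Case $1<p\le 2$.} We use the Hölder continuity of $\mathbf q\mapsto|\mathbf q|^{p-2}\mathbf q$:
\[
\big||\mathbf a|^{p-2}\mathbf a-|\mathbf b|^{p-2}\mathbf b\big|\le c_p|\mathbf a-\mathbf b|^{p-1}.
\]
Since $(p-1)p'=p$, taking the $L^{p'}$ norm gives $c_p\|\mathbf a-\mathbf b\|_{L^p}^{p-1}=c_p\delta_h(v_h)^{p-1}$, which is the claim.

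\textbf{Case $p\ge 2$.} We use the local Lipschitz bound
\[
\big||\mathbf a|^{p-2}\mathbf a-|\mathbf b|^{p-2}\mathbf b\big|\le c_p|\mathbf a-\mathbf b|\,(|\mathbf a|+|\mathbf b|)^{p-2}.
\]
We then apply Hölder in $L^{p'}$ with exponents $\frac{p-1}{1}$ and $\frac{p-1}{p-2}$, which correspond to the $L^p$ norm of $|\mathbf a-\mathbf b|$ and the $L^p$ norm of $(|\mathbf a|+|\mathbf b|)$ raised to the power $p-2$. This gives
\[
\big\||\mathbf a|^{p-2}\mathbf a-|\mathbf b|^{p-2}\mathbf b\big\|_{L^{p'}}\le c_p\,\delta_h(v_h)\,\big(\|\mathbf a\|_{L^p}+\|\mathbf b\|_{L^p}\big)^{p-2}.
\]
Finally, $\vPi_h$ is the elementwise average, so Jensen's inequality makes it a contraction in $L^p(\Omega_{\mathrm{ext}})$. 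Hence
\[
\|\mathbf a\|_{L^p(\Omega_{\mathrm{ext}})}\le\|\mathbf b\|_{L^p(\Omega_{\mathrm{ext}})}\le\|v_h\|_{\Xsp},
\]
and the factor $2^{p-2}$ is absorbed into $c_p$.

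The only delicate point is the Hölder exponent bookkeeping in the case $p\ge2$, namely checking that $p'(p-2)\cdot\frac{p-1}{p-2}=p$. The vector inequalities themselves are standard; they are the same ones used for Proposition~\ref{prop:elipticity-continuity}. The constant $c_p$ depends only on $p$, so it is independent of $h$ and of $\Omega_{\mathrm{ext}}$.
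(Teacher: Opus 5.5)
Your proposal is correct and follows essentially the same argument as the paper. Both use the pointwise H\"older/Lipschitz continuity of $\mathbf q\mapsto|\mathbf q|^{p-2}\mathbf q$, H\"older's inequality, and the $L^p$-contractivity of $\vPi_h$. The only differences are cosmetic: you apply H\"older before the pointwise bound, and you write out the exponent bookkeeping for $p\ge2$ (including $(p-1)p'=p$) that the paper leaves implicit.
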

\begin{proof}
	For \(1<p\leq2\), the pointwise inequalities underlying
	\eqref{eq:continuity} give
	\[
	|\mathcal R_h^{\mathrm{proj}}(v_h;\varphi_h)|
	\leq
	c_p\int_{\Omega_{\mathrm{ext}}}
	|(I-\vPi_h)\grads v_h|^{p-1}
	|\grads\varphi_h|\,\mathrm{d}x
	\leq
	c_p\,\delta_h(v_h)^{p-1}\|\varphi_h\|_{\Xsp}.
	\]
	For \(p\geq2\), the corresponding pointwise inequality and Hölder's inequality yield
	\[
	|\mathcal R_h^{\mathrm{proj}}(v_h;\varphi_h)|
	\leq
	c_p\,\delta_h(v_h)
	\bigl(\|\Bh v_h\|_{L^p(\Omega_{\mathrm{ext}})}+\|\grads v_h\|_{L^p(\Omega_{\mathrm{ext}})}\bigr)^{p-2}
	\|\varphi_h\|_{\Xsp}.
	\]
	The result follows from the \(L^p\)-stability of \(\vPi_h\), which gives
	\[\|\Bh v_h\|_{L^p(\Omega_{\mathrm{ext}})}\leq\|\grads v_h\|_{L^p(\Omega_{\mathrm{ext}})}\leq\|v_h\|_{\Xsp}.\]
\end{proof}

We next estimate the contribution arising from the truncation of the computational domain.

\begin{Lemma}\label{lem:tail-residual}
	Assume that \(\Omega\subset B_R(0)\) and
	\(B_{R_{\mathrm{ext}}}(0)\subset\Omega_{\mathrm{ext}}\), with
	\(R_{\mathrm{ext}}\geq2R\). Then, there exists a constant
	\(C>0\), independent of \(h\) and \(R_{\mathrm{ext}}\), such that
	\begin{equation}
		|\mathcal R_h^{\mathrm{tail}}(v_h;\varphi_h)|
		\leq
		C R_{\mathrm{ext}}^{\,d-(d+s)p}
		\|v_h\|_{\Xsp}^{p-1}\|\varphi_h\|_{\Xsp}
	\end{equation}
	for all \(v_h,\varphi_h\in V_h\).
\end{Lemma}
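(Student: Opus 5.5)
Since $v_h$ and $\varphi_h$ are supported in $\overline{\Omega}\subset B_R(0)$, I would bound $\grads v_h$ and $\grads\varphi_h$ pointwise on $\Oext^c$ by reusing the far-field step of Lemma~\ref{lem:contiguity}. For $x\in\Oext^c$ we have $|x|\ge R_{\mathrm{ext}}\ge 2R$, hence $|x-y|\ge\frac12|x|$ for every $y\in\Omega$. The integrand of \eqref{eq:def-grads} at such an $x$ reduces to $-v(y)(x-y)/|x-y|^{d+s+1}$, with $y$ ranging over $\Omega$. This gives
\[
|\grads w(x)|\le \frac{C(d,s)}{|x|^{d+s}}\int_\Omega |w(y)|\,dy\le \frac{C(d,s)\,|\Omega|^{1/p'}}{|x|^{d+s}}\|w\|_{L^p(\Omega)}\le \frac{C}{|x|^{d+s}}\|w\|_{\Xsp}
\]
for all $x\in\Oext^c$ and $w\in\{v_h,\varphi_h\}$. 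The last inequality is Hölder followed by the Poincaré inequality \eqref{eq:Poincare}. The constant depends only on $d,s,p,\Omega$, so it is independent of $h$ and of $R_{\mathrm{ext}}$. The pointwise formula is legitimate because $V_h\subset W^{1,p}$ with compact support, so Proposition~\ref{prop-grad-riesz-from} or a direct computation applies away from the support.

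Next, I would insert these bounds into \eqref{eq:tail-residual}:
\[
|\mathcal R_h^{\mathrm{tail}}(v_h;\varphi_h)|\le\int_{\Oext^c}|\grads v_h|^{p-1}|\grads\varphi_h|\,dx\le C\|v_h\|_{\Xsp}^{p-1}\|\varphi_h\|_{\Xsp}\int_{|x|\ge R_{\mathrm{ext}}}|x|^{-(d+s)p}\,dx .
\]
Here I used $\Oext^c\subset B_{R_{\mathrm{ext}}}(0)^c$. The remaining integral equals $\omega_{d-1}R_{\mathrm{ext}}^{\,d-(d+s)p}/((d+s)p-d)$. It is finite since $(d+s)p>d$ for all $p>1$, and the factor $((d+s)p-d)^{-1}$ depends only on $d,s,p$. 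This yields exactly the power $R_{\mathrm{ext}}^{d-(d+s)p}$ in the statement.

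The only step needing care is the uniform far-field pointwise bound, in particular checking that the geometric constant coming from $|x-y|\ge|x|/2$ does not depend on $R_{\mathrm{ext}}$. This holds because the condition $R_{\mathrm{ext}}\ge 2R$ is exactly what makes that inequality valid with the fixed factor $1/2$. Everything else is Hölder and an explicit radial integral.
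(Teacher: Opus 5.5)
Your proposal is correct and follows essentially the same argument as the paper: a far-field pointwise bound $|\grads w(x)|\lesssim |x|^{-d-s}\|w\|_{L^1(\Omega)}$ on $\Oext^c$, then H\"older and Poincar\'e to reach the $\Xsp$ norm, then the explicit radial integral producing $R_{\mathrm{ext}}^{\,d-(d+s)p}$. The only differences are cosmetic: you apply H\"older and Poincar\'e before integrating rather than after, and you spell out why $R_{\mathrm{ext}}\ge 2R$ gives $|x-y|\ge|x|/2$ with a constant independent of $R_{\mathrm{ext}}$.
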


\begin{proof}
	Since \(v_h\) is supported in \(\overline{\Omega}\), for
	\(x\in\R^d\setminus\Omega_{\mathrm{ext}}\) we have
	\[
	|\grads v_h(x)|
	\leq
	C|x|^{-d-s}\|v_h\|_{L^1(\Omega)}.
	\]
	The same estimate holds for \(\varphi_h\). Therefore,
	\[
	|\mathcal R_h^{\mathrm{tail}}(v_h;\varphi_h)|
	\leq
	C\|v_h\|_{L^1(\Omega)}^{p-1}
	\|\varphi_h\|_{L^1(\Omega)}
	\int_{|x|\geq R_{\mathrm{ext}}}|x|^{-(d+s)p}\,\mathrm{d}x.
	\]
	Since the last integral is bounded by
	\(C R_{\mathrm{ext}}^{d-(d+s)p}\), the result follows from
	Hölder's and Poincaré's inequalities.
\end{proof}

We introduce some notation to exploit Lemmas~\ref{lem:projection-residual} and
\ref{lem:tail-residual}. For $v_h\in V_h$, we define the consistency error of $v_h$ as
\begin{equation}\label{eq:cons-def}
\mathrm{cons}_h(v_h) :=
\sup_{\varphi_h\in V_h\setminus\{0\}}
\frac{|\mathcal R_h(v_h;\varphi_h)|}{\|\varphi_h\|_{\Xsp}},
\end{equation}
which we have shown satisfies
\begin{equation}
	% \|\mathcal R_h(v_h)\|_{V_h'}
\mathrm{cons}_h(v_h)
 	\lesssim
	\left\lbrace \begin{aligned}
		& \delta_h(v_h)^{p-1}
		+ R_{\mathrm{ext}}^{\,d-(d+s)p}
		\|v_h\|_{\Xsp}^{p-1}, &
		1<p\leq2,\\
		& \delta_h(v_h)
		\|v_h\|_{\Xsp}^{p-2}
		+ R_{\mathrm{ext}}^{\,d-(d+s)p}
		\|v_h\|_{\Xsp}^{p-1},
		& 2\leq p<\infty,
	\end{aligned} \right.
\end{equation}
To relate this bound to the approximation of \(u\), we use the \(L^p\)-stability of \(\vPi_h\), which gives \(\delta_h(v_h)\leq\delta_h(u)+2\|u-v_h\|_{\Xsp}\) for every \(v_h\in V_h\). Hence, for any family \(\{v_h\}_h\) uniformly bounded in \(\Xsp\) we have
\begin{equation}\label{eq:consistency-residual-bound}
% \sup_{\varphi_h\in V_h\setminus\{0\}}  \frac{|\mathcal R_h(v_h;\varphi_h)|}{\|\varphi_h\|_{\Xsp}}
\mathrm{cons}_h(v_h)
\lesssim 
\left\lbrace \begin{aligned}
	& \delta_h(u)^{p-1}
	+\|u-v_h\|_{\Xsp}^{p-1}
	+R_{\mathrm{ext}}^{\,d-(d+s)p},
	&1<p\leq2,\\
	& \delta_h(u)+\|u-v_h\|_{\Xsp}
	+R_{\mathrm{ext}}^{\,d-(d+s)p},
	& 2\leq p<\infty.
\end{aligned} \right.
\end{equation}
with hidden constants depending on $p,s,\Omega$ but independent of $h$ and $R_{\mathrm{ext}}$.
% Choosing \(v_h=\pi_hu\), the interpolation estimate \ref{Cota de interpolacion} and Lemma~\ref{lem:projection-defect} provide bounds for both approximation terms, leading to the following result.
The consistency error at \(v_h=\pi_hu\) can be bounded as follows.

\begin{proposition} \label{prop:consistency-projection}
Let \(\Omega\) be a bounded Lipschitz domain, \(s\in(0,1)\), \(p\in(1,\infty)\), and \(u\in\Xsp\) be the weak solution of \eqref{eq:Dirichlet-problem}, with \(f\) satisfying the assumptions of Theorem~\ref{thm:regularity}. Assume that \(\Omega\subset B_R(0)\) and \(B_{R_{\mathrm{ext}}}(0)\subset\Omega_{\mathrm{ext}}\), and that the exterior meshes \(\widetilde{\mathcal T}_h\) are uniformly shape-regular and quasi-uniform with mesh size \(h\). Then, for sufficiently small \(h\),
	\begin{equation}\label{eq: Order-Resid}
		% \sup_{\varphi_h\in V_h\setminus\{0\}} \frac{|\mathcal R_h(\pi_hu;\varphi_h)|}{\|\varphi_h\|_{\Xsp}}
\mathrm{cons}_h(\pi_hu) \lesssim
\left\lbrace \begin{aligned}
 & h^{\beta(p-1)}|\log h|^{p-1}	+R_{\mathrm{ext}}^{\,d-(d+s)p},
			&1<p\leq2,\\
& h^\beta|\log h| +R_{\mathrm{ext}}^{\,d-(d+s)p},
			&2\leq p<\infty,
\end{aligned} \right.
	\end{equation}
where \(\beta\) is defined in \eqref{eq:proj-def-u}. The hidden constants depend on the problem data and the mesh regularity.
\end{proposition}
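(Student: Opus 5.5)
The plan is to specialize the general bound \eqref{eq:consistency-residual-bound} to the choice \(v_h=\pi_h u\), and then estimate its two $h$-dependent ingredients, \(\delta_h(u)\) and \(\|u-\pi_h u\|_{\Xsp}\), by earlier results.

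\textbf{Step 1: uniform boundedness.} The bound \eqref{eq:consistency-residual-bound} requires the family \(\{\pi_h u\}_h\) to be uniformly bounded in \(\Xsp\). This follows from the triangle inequality together with \eqref{eq:stability}, provided \(\|u-\pi_hu\|_{\Xsp}\) stays bounded. That in turn is a consequence of the interpolation estimate obtained in Step 3 below, which tends to zero, so it holds for \(h\) small enough; this is the origin of the ``sufficiently small \(h\)'' proviso.

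\textbf{Step 2: the projection defect.} Lemma~\ref{lem:projection-defect} gives \(\delta_h(u)\lesssim h^\beta\) directly, since the exterior meshes are quasi-uniform.

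\textbf{Step 3: the interpolation error.} Theorem~\ref{thm:regularity} yields \(u\in\dot B^{s+\beta}_{p,\infty}(\Omega)\) in each of the four regimes; this is exactly how \(\beta\) was defined in \eqref{eq:proj-def-u}. I would use the embedding \(\dot B^{s+\beta}_{p,\infty}(\Omega)\subset\widetilde W^{s+\beta-\varepsilon,p}(\Omega)\), whose constant behaves like \(\varepsilon^{-1/p}\) (cf.\ \cite[Lemma 2.1]{borthagaray2024quasi}, as in the proof of Corollary~\ref{Corolario:ConvergenciaBesov}). Then I apply Theorem~\ref{Cota de interpolacion} with \(t=s+\beta-\varepsilon\) and a second parameter \(\varepsilon\), which is admissible since \(t-s=\beta-\varepsilon>\varepsilon\) for small \(\varepsilon\). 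This gives
\[
\|u-\pi_hu\|_{\Xsp}\lesssim \varepsilon^{\frac{1-p}{p}}\varepsilon^{-\frac1p}h^{\beta-2\varepsilon}
= \varepsilon^{-1}h^{\beta-2\varepsilon}.
\]
Choosing \(\varepsilon=|\log h|^{-1}\) makes \(h^{-2\varepsilon}=e^{2}\), so \(\|u-\pi_hu\|_{\Xsp}\lesssim h^\beta|\log h|\).

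\textbf{Step 4: combining.} Inserting Steps 2 and 3 into \eqref{eq:consistency-residual-bound} proves the claim.
\begin{itemize}
\item For \(p\ge2\), the bound is \(h^\beta + h^\beta|\log h| + R_{\mathrm{ext}}^{d-(d+s)p}\).
\item For \(1<p\le2\), both approximation terms are raised to the power \(p-1\), giving \(h^{\beta(p-1)}|\log h|^{p-1}\).
\end{itemize}
In both cases the logarithmic term dominates \(\delta_h(u)\).

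\textbf{Main obstacle.} The delicate point is Step 3: the \(\varepsilon\)-dependent constants of the Besov-to-Sobolev embedding and of Lemma~\ref{lem:contiguity}, as they enter Theorem~\ref{Cota de interpolacion}, must be tracked carefully to confirm that only a single power of \(|\log h|\) appears. If the constants combined less favorably, the power of the logarithm would change, but the algebraic rate \(h^\beta\) would remain the same.
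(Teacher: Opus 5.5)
Your proposal is correct and follows the paper's proof. The paper also specializes \eqref{eq:consistency-residual-bound} to $v_h=\pi_h u$, uses the uniform boundedness of $\{\pi_hu\}_h$, bounds $\delta_h(u)\lesssim h^\beta$ by Lemma~\ref{lem:projection-defect}, and bounds $\|u-\pi_hu\|_{\Xsp}\lesssim h^\beta|\log h|$ via Theorems~\ref{thm:regularity} and \ref{Cota de interpolacion}. Your Step 3 bookkeeping, $\varepsilon^{(1-p)/p}\varepsilon^{-1/p}=\varepsilon^{-1}$ with $\varepsilon=|\log h|^{-1}$, spells out what the paper leaves implicit by pointing to the proof of Corollary~\ref{Corolario:ConvergenciaBesov}, and it confirms that only a single power of $|\log h|$ appears.
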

\begin{proof}
Theorems~\ref{thm:regularity} and \ref{Cota de interpolacion}, together with Lemma~\ref{lem:projection-defect}, give
	\[
	\|u-\pi_hu\|_{\Xsp}\lesssim h^\beta|\log h|,
	\qquad
	\delta_h(u)\lesssim h^\beta.
	\]
Since the family \(\{\pi_hu\}_h\) is uniformly bounded in \(\Xsp\). The result follows by substituting these bounds into \eqref{eq:consistency-residual-bound}.
\end{proof}
\begin{remark}
	At this stage, we do not prescribe a relation between
	\(R_{\mathrm{ext}}\) and \(h\). The growth of \(R_{\mathrm{ext}}\)
	required to balance the truncation and discretization errors
	will be specified when deriving the convergence rates.
\end{remark}
%%%%%%%%%%%%%%%%%
\subsection{A Strang-type estimate and convergence rates}
%%%%%%%%%%%%%%%%%

We now derive an estimate for the error of the solution
\(u_h\) to \eqref{eq:mod-discrete-sol} by means of an argument in the spirit of Strang's first lemma. The error is thus decomposed as the sum of an approximation and a consistency term. Since we have already analyzed the approximation and consistency errors for $\pi_h u$ (cf. Sections~\ref{sec:interpolation} and~\ref{sec:consistency}), error rates follow immediately.

% \yo{Lo escribo muy general pero puedo tomar \( \pi_h u \) directamente en el Lemma de Strang y simplificar un poco las cotas finales. ver el remark }
\begin{proposition}\label{prop:strang}
	Let \(u\in\Xsp\) be the solution of problem
	\eqref{Problema variacional Plaplaciano}, \(u_h\in V_h\)
	solve \eqref{eq:mod-discrete-sol},
	\(C_h\) be as in \eqref{eq:Bh-inverse-ineq}, and \(v_h\in V_h\) be arbitrary. If \(1<p\leq2\), then
{\begin{equation}\label{eq:strang-subquadratic}
\begin{aligned}
\|u-u_h\| \leq \|u-v_h\| + \frac{C_h^2}{\alpha} & \left( \|\Bh v_h\|_{L^p(\Omega_{\mathrm{ext}};\R^d)}
+ \|\Bh u_h\|_{L^p(\Omega_{\mathrm{ext}};\R^d)}\right)^{2-p} \\
& \quad \times \left( \mathrm{cons}_h(v_h) +\gamma\|u-v_h\|^{p-1} \right).
		\end{aligned}
	\end{equation}}
	If \(2\leq p<\infty\), then
% {\small \begin{equation}\label{eq:strang-superquadratic}
% 		\begin{aligned}
% 			\|u-u_h\|_{\Xsp}
% 			& \leq
% 			\|u-v_h\|_{\Xsp}\\
% 			&+
% 			C_h\left[
% 			\frac{C_h}{\alpha}
% 			\left(
% 			\mathrm{cons}_h(v_h)
% 			+\gamma\|u-v_h\|_{\Xsp}
% 			\bigl(\|u\|_{\Xsp}+\|v_h\|_{\Xsp}\bigr)^{p-2}
% 			\right)
% 			\right]^{\frac1{p-1}}.
% 		\end{aligned}
% 	\end{equation}}
\begin{equation}\label{eq:strang-superquadratic}
		\begin{aligned}
			\|u-u_h\|
			& \leq
			\|u-v_h\|+
			C_h\left[
			\frac{C_h}{\alpha}
			\left(
			\mathrm{cons}_h(v_h)
			+\gamma\|u-v_h\|
\bigl(\|u\|+\|v_h\|\bigr)^{p-2}
			\right)
			\right]^{\frac1{p-1}}.
		\end{aligned}
	\end{equation}
Above, $\| \cdot \|$ stands for the $\Xsp$ norm and \(\alpha\) and \(\gamma\) are the constants from
	Proposition \ref{prop:elipticity-continuity}.
\end{proposition}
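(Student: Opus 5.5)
The plan is to split the error by the triangle inequality, $\|u-u_h\|\le\|u-v_h\|+\|v_h-u_h\|$, and bound the discrete part $e_h:=u_h-v_h\in V_h$ through the coercivity of $\mathcal{A}_h$ (Proposition~\ref{prop:coercivity-modified}) combined with the inverse inequality \eqref{eq:Bh-inverse-ineq}.

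The key identity comes from the two Euler--Lagrange equations. By \eqref{eq:mod-discrete-sol} and \eqref{Problema variacional Plaplaciano}, tested with $\varphi_h=e_h\in V_h\subset\Xsp$, we have $\langle\mathcal{A}_h u_h,e_h\rangle=\langle f,e_h\rangle=\langle\mathcal{A}u,e_h\rangle$. Using the definition \eqref{eq:consistency-residual} of the residual, this gives
\[
\langle \mathcal{A}_h u_h-\mathcal{A}_h v_h,e_h\rangle=\langle\mathcal{A}u-\mathcal{A}v_h,e_h\rangle-\mathcal{R}_h(v_h;e_h).
\]
By \eqref{eq:cons-def} and \eqref{eq:continuity}, the right-hand side is bounded by
\[
\bigl(\mathrm{cons}_h(v_h)+\|\mathcal{A}u-\mathcal{A}v_h\|_{X^{-s,p'}(\Omega)}\bigr)\|e_h\|.
\]
The dual norm is controlled as follows:
\begin{itemize}
\item for $p\le2$, it is at most $\gamma\|u-v_h\|^{p-1}$;
\item for $p\ge2$, it is at most $\gamma\|u-v_h\|(\|u\|+\|v_h\|)^{p-2}$.
\end{itemize}

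On the left-hand side I would apply Proposition~\ref{prop:coercivity-modified}.

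\textbf{Case $p\ge2$.} The left-hand side is at least $\alpha\|\Bh e_h\|_{L^p}^p\ge\alpha C_h^{-p}\|e_h\|^p$ by \eqref{eq:Bh-inverse-ineq}. Dividing by $\|e_h\|$ gives
\[
\|e_h\|^{p-1}\le \frac{C_h^p}{\alpha}(\cdots).
\]
Taking the $1/(p-1)$ power produces $C_h^{p/(p-1)}(\cdots/\alpha)^{1/(p-1)}$, which equals $C_h\,[C_h(\cdots)/\alpha]^{1/(p-1)}$. This matches \eqref{eq:strang-superquadratic}.

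\textbf{Case $p\le2$.} The left-hand side is at least
\[
\alpha\|\Bh e_h\|^2_{L^p}\bigl(\|\Bh u_h\|_{L^p}+\|\Bh v_h\|_{L^p}\bigr)^{p-2}\ge \alpha C_h^{-2}\|e_h\|^2(\cdots)^{p-2}.
\]
Dividing by $\|e_h\|$ and moving the $(\cdot)^{p-2}$ factor across yields \eqref{eq:strang-subquadratic} with the factor $C_h^2/\alpha$. The case $e_h=0$ is trivial and can be disposed of first.

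There is no serious obstacle; the only care needed is bookkeeping of the powers of $C_h$. The points to handle are:
\begin{itemize}
\item applying \eqref{eq:Bh-inverse-ineq} to $e_h$ in the right direction, converting $\|\Bh e_h\|$ into $\|e_h\|$ only on the coercive side;
\item keeping the dual pairing on the right measured in $\Xsp$, which is exactly the normalization in $\mathrm{cons}_h$;
\item checking that $\mathcal{A}v_h$ and $\mathcal{A}u$ are tested against the same $e_h$, which is legitimate because $V_h\subset\Xsp$.
\end{itemize}
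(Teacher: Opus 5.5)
Your proposal is correct and follows essentially the paper's argument: you use the same error identity from the two Euler--Lagrange equations and bound its right-hand side by $\mathrm{cons}_h(v_h)$ plus the continuity of $\mathcal{A}$, then close with the coercivity of $\mathcal{A}_h$ and \eqref{eq:Bh-inverse-ineq}. The only cosmetic difference is that you apply \eqref{eq:Bh-inverse-ineq} on the coercive side, whereas the paper applies it on the right-hand side and once more at the end; both routes give exactly the constants in \eqref{eq:strang-subquadratic}--\eqref{eq:strang-superquadratic}.
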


\begin{proof}
If $v_h = u_h$, the result is evident. We therefore assume \(e_h:=v_h-u_h \neq 0\). Since \(u\) and \(u_h\) solve \eqref{Problema variacional Plaplaciano} and \eqref{eq:mod-discrete-sol}, respectively,
	\[
	\begin{aligned}
		\langle\mathcal A_hv_h-\mathcal A_hu_h,e_h\rangle
		&=\langle\mathcal A_hv_h-\mathcal Av_h,e_h\rangle
		+\langle\mathcal Av_h-\mathcal Au,e_h\rangle\\
		&=\mathcal R_h(v_h;e_h)
		+\langle\mathcal Av_h-\mathcal Au,e_h\rangle.
	\end{aligned}
	\]
	By the definition of $\mathrm{cons}_h$ and \eqref{eq:Bh-inverse-ineq},
	\[
	\begin{aligned}
		\langle\mathcal A_hv_h-\mathcal A_hu_h,e_h\rangle
		&\leq\left( \mathrm{cons}_h(v_h) +\|\mathcal Av_h-\mathcal Au\|_{\Xspd}\right)\|e_h\|_{\Xsp}\\
		&\leq
		C_h\left( \mathrm{cons}_h(v_h) +\|\mathcal Av_h-\mathcal Au\|_{\Xspd}\right)\|\Bh e_h\|_{L^p(\Omega_{\mathrm{ext}};\R^d)}.
	\end{aligned}
	\]

In the case \(p\geq2\), combining the previous estimate with Proposition~\ref{prop:coercivity-modified} gives
	\[
	\alpha\|\Bh e_h\|_{L^p(\Omega_{\mathrm{ext}};\R^d)}^p
	\leq
	C_h\left( \mathrm{cons}_h(v_h) +\|\mathcal Av_h-\mathcal Au\|_{\Xspd}\right)\|\Bh e_h\|_{L^p(\Omega_{\mathrm{ext}};\R^d)}.
	\]
	Dividing by \(\|\Bh e_h\|_{L^p(\Omega_{\mathrm{ext}};\R^d)}\) --which is positive by Proposition~\ref{prop:Bh-injective}--
    and using \eqref{eq:Bh-inverse-ineq}, we obtain
	\[
	\|e_h\|_{\Xsp}
	\leq
	C_h\left[\frac{C_h}{\alpha}\left(\mathrm{cons}_h(v_h)+\|\mathcal Av_h-\mathcal Au\|_{\Xspd}\right)\right]^{1/(p-1)}.
	\]
	Applying the continuity estimate \eqref{eq:continuity} and the triangle inequality
\[
\|u-u_h\|_{\Xsp}
\leq
\|u-v_h\|_{\Xsp}+\|e_h\|_{\Xsp},
\]
we conclude \eqref{eq:strang-superquadratic}. The subquadratic estimate \eqref{eq:strang-subquadratic} follows by the same argument, using the corresponding inequalities in Proposition~\ref{prop:coercivity-modified} and \eqref{eq:continuity}. 
\end{proof}

\begin{remark}\label{rem:strang-uniform}
	The estimates in Proposition~\ref{prop:strang} hold for every fixed \(h\) and do not require the constant \(C_h\) in \eqref{eq:Bh-inverse-ineq} to be uniform. To obtain convergence and convergence rates, and supported by our explicit calculations on uniform meshes in one dimension in Appendix~\ref{app:bound-Bh} as well as the numerical evidence in Section~\ref{sec:exp-linear}, in what follows we assume the uniform bound
\begin{equation}\label{eq:uniform-Bh}
		\sup_h C_h<\infty.
	\end{equation}
    \end{remark}

	Under this assumption, the stability estimate for \(u_h\) and the \(L^p\)-stability of \(\vPi_h\) imply that \(\|\Bh u_h\|_{L^p(\Omega_{\mathrm{ext}};\R^d)}\) is uniformly bounded. Moreover,  in Proposition~\ref{prop:strang} we can take \(v_h=\pi_hu\), which is also uniformly bounded in $\Xsp$, to derive convergence rates. The growth of \(R_{\mathrm{ext}}\) required to balance the truncation and discretization errors will be determined below.

\begin{corollary}\label{cor:modified-convergence-rates}
	Let the assumptions of Proposition~\ref{prop:consistency-projection} hold, and assume the uniform stability condition \eqref{eq:uniform-Bh}. Suppose that \(\Omega\subset B_R(0)\), \(B_{R_{\mathrm{ext}}}(0)\subset\Omega_{\mathrm{ext}}\),
and let $\beta$ be defined as in \eqref{eq:proj-def-u}, namely $\beta =\min\{s,\tfrac12\}$ if $1<p\le2$ and $\beta = \min\{\tfrac{s}{p-1},\tfrac1p\}$ if $2\le p<\infty$.
    Then, we have
    	\begin{equation}\label{eq:modified-rate}    	    
	\|u-u_h\|_{\Xsp}
	\lesssim
\left\lbrace	\begin{aligned}
& h^{\beta(p-1)}|\log h|^{p-1}+R_{\mathrm{ext}}^{\,d-(d+s)p},
		&1<p\leq2,\\
& h^{\frac{\beta}{p-1}}|\log h|^{\frac1{p-1}}+R_{\mathrm{ext}}^{\frac{d-(d+s)p}{p-1}},&2\leq p<\infty.
	\end{aligned} \right.
    	\end{equation}
	The hidden constants above are independent of \(h\) and
	\(R_{\mathrm{ext}}\).
\end{corollary}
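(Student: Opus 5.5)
The plan is to apply Proposition~\ref{prop:strang} with the choice $v_h=\pi_h u$. Then I will insert the bounds already established for the two ingredients: the approximation error $\|u-\pi_hu\|_{\Xsp}$ and the consistency error $\mathrm{cons}_h(\pi_hu)$. The first step is to collect uniform bounds on all auxiliary quantities appearing in \eqref{eq:strang-subquadratic}--\eqref{eq:strang-superquadratic}:
\begin{itemize}
\item by \eqref{eq:uniform-Bh}, $C_h\le \bar C$ uniformly;
\item by \eqref{eq:discrete-ref-stability}, $\|u_h\|_{\Xsp}\lesssim \|f\|_{\Xspd}^{1/(p-1)}$, and hence $\|\Bh u_h\|_{L^p(\Omega_{\mathrm{ext}})}\le\|u_h\|_{\Xsp}$ is bounded, using the $L^p$-stability of $\vPi_h$;
\item $\|\pi_hu\|_{\Xsp}\le \|u\|_{\Xsp}+\|u-\pi_hu\|_{\Xsp}$ is bounded by \eqref{eq:stability} and the interpolation estimate, so also $\|\Bh\pi_hu\|_{L^p}$ is bounded.
\end{itemize}

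In the subquadratic case $1<p\le2$, the prefactor $\tfrac{C_h^2}{\alpha}(\|\Bh v_h\|+\|\Bh u_h\|)^{2-p}$ is then bounded (note $2-p\ge0$). This gives
\[
\|u-u_h\|_{\Xsp}\lesssim \|u-\pi_hu\|_{\Xsp}+\mathrm{cons}_h(\pi_hu)+\|u-\pi_hu\|_{\Xsp}^{p-1}.
\]
From the proof of Proposition~\ref{prop:consistency-projection} we have $\|u-\pi_hu\|\lesssim h^\beta|\log h|$. Since $p-1\le1$ and this quantity is small for $h$ small, the term $h^\beta|\log h|$ is dominated by $(h^\beta|\log h|)^{p-1}$. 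Plugging in \eqref{eq: Order-Resid} yields the first line of \eqref{eq:modified-rate}.

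In the superquadratic case the factor $(\|u\|+\|\pi_hu\|)^{p-2}$ is bounded. So the bracket is $\lesssim \mathrm{cons}_h(\pi_hu)+\|u-\pi_hu\|\lesssim h^\beta|\log h|+R_{\mathrm{ext}}^{d-(d+s)p}$. Raising this to the power $1/(p-1)$ and using subadditivity $(a+b)^{1/(p-1)}\le a^{1/(p-1)}+b^{1/(p-1)}$ (valid since $1/(p-1)\le1$) splits it into $h^{\beta/(p-1)}|\log h|^{1/(p-1)}+R_{\mathrm{ext}}^{(d-(d+s)p)/(p-1)}$. The leftover term $\|u-\pi_hu\|\lesssim h^\beta|\log h|$ is dominated by the first of these, since $\beta/(p-1)\le\beta$ and $h$ is small.

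The only delicate point is bookkeeping: every hidden constant must be independent of $h$ and $R_{\mathrm{ext}}$. This holds because the truncation bound of Lemma~\ref{lem:tail-residual}, the projection-defect bound \eqref{eq:proj-def-u}, and the stability bounds are all independent of $R_{\mathrm{ext}}$. I also need to check that \eqref{eq:consistency-residual-bound} was applied to a family uniformly bounded in $\Xsp$, which is exactly the boundedness of $\{\pi_hu\}_h$ established in the first step.
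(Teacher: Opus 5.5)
Your proposal is correct and follows the paper's proof. You take $v_h=\pi_hu$ in Proposition~\ref{prop:strang}, bound the prefactors uniformly via \eqref{eq:uniform-Bh} and the stability estimates, and insert the consistency bound of Proposition~\ref{prop:consistency-projection} together with $\|u-\pi_hu\|\lesssim h^\beta|\log h|$; you then absorb the lower-order term for $p\le2$ and use subadditivity of $t\mapsto t^{1/(p-1)}$ for $p\ge2$, which is slightly more explicit than the paper about why the leftover $h^\beta|\log h|$ is dominated when $p\ge2$.
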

\begin{proof}
	Taking \(v_h=\pi_hu\) in 
 \eqref{eq:strang-subquadratic} and \eqref{eq:strang-superquadratic}, using the consistency bound  \eqref{eq: Order-Resid}, and arguing as in the proof of Corollary~\ref{Corolario:ConvergenciaBesov} to bound the interpolation error, gives  
    % and using \eqref{eq: Order-Resid}  with the	interpolation bound preceding it, we obtain
	\[
	\|u-u_h\|_{\Xsp}
	\lesssim \left\lbrace
	\begin{aligned}
& h^\beta|\log h|
		+h^{\beta(p-1)}|\log h|^{p-1}+R_{\mathrm{ext}}^{\,d-(d+s)p},&1<p\leq2,\\
& h^\beta|\log h| +\left( h^\beta|\log h|+R_{\mathrm{ext}}^{\,d-(d+s)p}
		\right)^{1/(p-1)},&2\leq p<\infty,
	\end{aligned} \right.
	\]
where $\beta$ is defined in \eqref{eq:proj-def-u}.
For $p \le 2$, the first term can be absorbed into the second for sufficiently small \(h\), while when \(p\geq2\) one can exploit the subadditivity of \(t\mapsto t^{1/(p-1)}\). The bound  \eqref{eq:modified-rate} follows.
\end{proof}

\begin{remark} 
    To ensure that the domain-truncation contribution does not degrade the corresponding discretization rate, it is sufficient to choose \(R_{\mathrm{ext}}=R_{\mathrm{ext}}(h)\) such that
	\begin{equation}\label{eq:choice-R}
	    	R_{\mathrm{ext}}(h)\gtrsim \left\lbrace
	\begin{aligned}
		& \left(h^{\min\{s,\frac12\}}|\log h|\right)^{
			-\frac{p-1}{(d+s)p-d}},
		&1<p\leq2,\\
		& \left(h^{\min\{\frac{s}{p-1},\frac1p\}}|\log h|\right)^{
			-\frac1{(d+s)p-d}},
		&2\leq p<\infty.
	\end{aligned} \right.
		\end{equation}
	In particular, in the linear case \(p=2\) these conditions reduce to
	\[
	R_{\mathrm{ext}}(h)
	\gtrsim
	\left(h^{\min\{s,\frac12\}}|\log h|\right)^{-\frac1{d+2s}}
	\]
	and, under this choice, \eqref{eq:modified-rate} yields
	\[
	\|u-u_h\|_{\widetilde X^{s,2}(\Omega)}
	\lesssim h^{\min\{s,\frac12\}}|\log h|.
	\]
	Thus, for \(p=2\), the projected approximation preserves the Galerkin convergence rate established in Corollary~\ref{Corolario:ConvergenciaBesov}. This is, however, different for $p\neq2$. If one chooses $R_{\mathrm{ext}}$ according to \eqref{eq:choice-R}, then the resulting error bounds
    \begin{equation}\label{eq:error-rates}
    \|u-u_h\|_{\Xsp}
	\lesssim
\left\lbrace	\begin{aligned}
& h^{\beta(p-1)}|\log h|^{p-1},
		&1<p\leq2,\\
& h^{\frac{\beta}{p-1}}|\log h|^{\frac1{p-1}},&2\leq p<\infty.
	\end{aligned} \right.
    \end{equation}
    are not as sharp as the ones in Corollary~\ref{Corolario:ConvergenciaBesov}. We attribute this reduced rate to the proof strategy we pursued, which is based on exploiting the continuity of $\mathcal{A}$ and the monotonicity of $\mathcal{A}_h$ in a Strang's lemma fashion. The rates in Corollary~\ref{Corolario:ConvergenciaBesov} stem from exploiting the fact that, at the discrete level, one minimizes the same energy as in the continuous level, following the approach in \cite{chow1989finite}. 
    % If, for the finite element approach from Section~\ref{sec:FE}, one pursues an error analysis based only on the monotonicity and continuity properties of $\mathcal{A}$, as performed in \cite{glowinski1975approximation} in the local case, then one obtains the same rates as in \eqref{eq:error-rates}.
    If, for the finite element approach from Section \ref{sec:FE}, one pursues an error analysis based only on the monotonicity and continuity properties of $\mathcal A$, as performed in \cite{glowinski1975approximation} in the local case, one recovers exactly the rate \eqref{eq:error-rates} when $2\le p<\infty$. For $1<p\le2$ that argument is instead sharper, giving
$\|u-u_h\|\lesssim\inf_{v_h\in V_h}\|u-v_h\|^{1/(3-p)}$, since Galerkin orthogonality permits absorbing $\|u-u_h\|^{p-1}$, which
\eqref{eq:strang-subquadratic} precludes.
\end{remark}

\section{Numerical Experiments} \label{sec:NumericalExp}
%%%%%%%%%%%%%%%%%
%%%%%%%%%%%%%%%%%

 We now explore the behavior of the Bessel \(  (p,s) \)-Laplacian through a series of numerical experiments. Our first objective is to validate the proposed numerical approximation by considering the linear case \(p=2\), for which the operator coincides with the fractional Laplacian and explicit solutions are available. We then investigate the nonlinear regime, focusing on the influence of the parameters \(p\) and \(s\), as well as the geometry of the domain, on the computed solutions and their fractional gradients. Finally, We consider a problem with variable diffusivity, extending the numerical experiments beyond the setting covered by our theoretical analysis. In particular, we present results for the method described in Section~\ref{sec:variable-diffusivity} on the configuration \(A=\chi_{\Omega}\), for which the diffusivity vanishes outside the physical domain. 
 % These configurations will be described in detail below. 
 % All the experiments are performed using the Galerkin scheme developed in the previous sections.

\subsection{Validation in the linear case} \label{sec:exp-linear}

Our first set of experiments aims to validate Algorithm~\ref{Esquema iterativo p-laplaciano} in a setting for which closed-form solutions to problem~\eqref{Problema dirichlet} are available, namely, the case \(p=2\) with \(\Omega\) being a ball. We consider
\[
\left\{
\begin{aligned}
	(-\Delta)^s u &= 1 && \text{in } B(0,1), \\
	u &= 0 && \text{in } \mathbb{R}^d \setminus B(0,1).
\end{aligned}
\right.
\]
In this setting, the finite element approximation on uniform meshes is known to converge with order \(1/2\) for all \(s\in(0,1)\); see~\cite{acosta2017fractional}. We therefore compare the numerical solution \(u_h\) with the explicit solution and examine whether the experimental rates reproduce this behavior. 
For \(s<1/2\), this rate is sharper than the general estimate established in Corollary~\ref{Corolario:ConvergenciaBesov}, which is not expected to be optimal in this regime.

Figure~\ref{fig:convergence-comparison} summarizes our findings.
The left panel shows the results obtained with Algorithm~\ref{Esquema iterativo p-laplaciano} and \(s=0.25\), \(0.5\), and \(0.75\). The observed convergence orders are, respectively,  \(0.505\), \(0.518\), and \(0.560\); in agreement with the expected rate of \(1/2\). This indicates that, for the family of meshes considered in this experiment, the constant $C_h$ in \eqref{eq:Ch} remains uniformly bounded.

We also examine the dependence on the augmentation parameter \(r\) for \(s=0.5\). As expected, the scheme gives rise to the same results for either \(r=1\), \(r=2\) and \(r=5\). We contrast this fact with the same experiment for the approach described in Remark~\ref{rem:mala-idea}: we observe convergence orders of 
\(0.526\), \(0.423\), and \(0.281\), respectively. For \(r=1\) we expect to obtain the rate $1/2$ because of the cancellation effect discussed in Remark~\ref{rem:mala-idea}. For \(r\neq 1\) we observe reduced rates, which indicates that the lack of consistency affects the resulting convergence order of the method. 

\begin{figure}[htbp]
	\centering
	\begin{subfigure}[c]{0.43\textwidth}
		\centering
		\includegraphics[
			width=\linewidth,
			height=5.2cm,
			keepaspectratio
		]{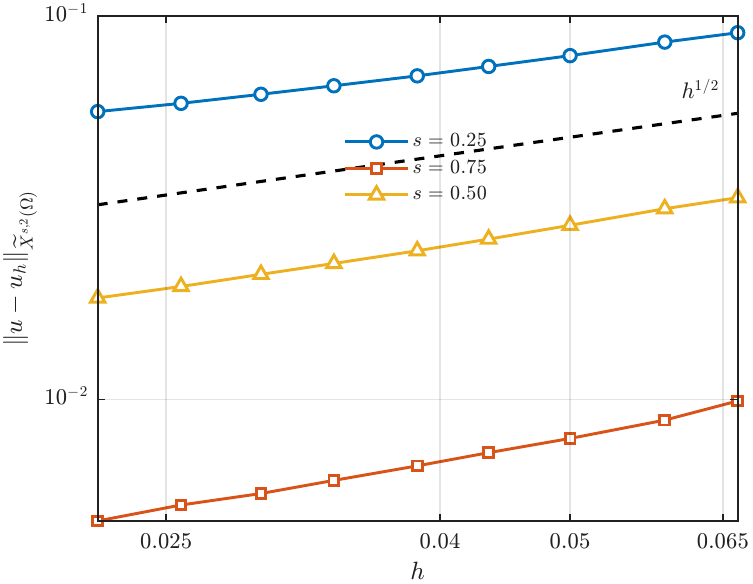}
		% \caption{Convergence rates for different values of \(s\).}
		\label{fig:convergence-orders}
	\end{subfigure}
	\hfill
	\begin{subfigure}[c]{0.43\textwidth}
		\centering
		\includegraphics[
			width=\linewidth,
			height=5.2cm,
			keepaspectratio]{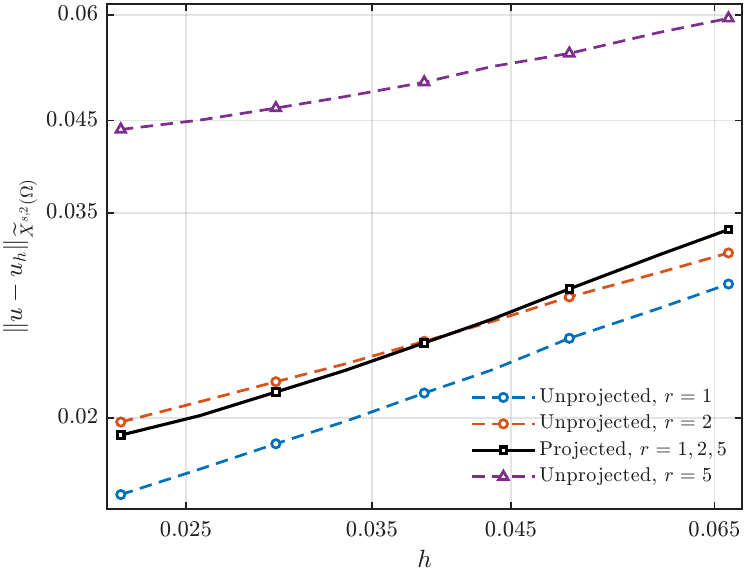}
		% \caption{Projected and unprojected schemes for different values of \(r\).}
		\label{fig:modified-vs-mixed}
	\end{subfigure}
	\caption{Energy-norm errors for \(p=2\).
		Left: convergence for different fractional orders \(s\).
		Right: comparison of the projected and unprojected schemes
		for \(s=0.5\) and \(r=1,2,5\).}
	\label{fig:convergence-comparison}
\end{figure}
In both schemes, we fixed \(\rho=0.5\), as no changes in the observed convergence orders were detected when varying this parameter. Since the projected iterative scheme exhibited no dependence of the observed convergence order on either \(r\) or \(\rho\), we use \(r=1\) and \(\rho=0.5\) in the remaining experiments.

\subsection{Effect of Nonlinearity}
We next present a qualitative test on a non-convex L-shaped domain. More precisely, we consider
\[
\left\{
\begin{aligned}
	-\operatorname{div}_s \left(|\nabla^s u|^{p-2}\nabla^s u\right)
	&=1 &&\text{in }\Omega:=(-1,1)^2\setminus[0,1)^2,\\
	u&=0 &&\text{in }\mathbb{R}^d\setminus\Omega,
\end{aligned}
\right.
\]
We fix \(p=10\), corresponding to a strongly nonlinear regime, and consider the fractional orders \(s=0.1\), \(s=0.5\), and \(s=0.9\). The purpose of this experiment is to provide a qualitative verification of the method on a non-convex geometry and to illustrate how the fractional parameter affects the shape of the solution in the presence of strong nonlinearity.

The physical domain is embedded in an auxiliary computational region contained in \(B(0,2)\) in order to account for the nonlocal interactions. The resulting triangulation consists of \(15\,158\) nodes and \(29\,992\) triangular elements, of which \(7\,603\) belong to the physical domain.

Figure~\ref{fig:uh-Lshape} displays the computed solutions, with top views and sections of the discrete solutions along the line \(y=0\). The solutions are markedly asymmetric for $s = 0.1$, and become progressively more symmetric as $s$ grows. This asymmetry is a genuinely nonlocal effect: the exterior condition enters weighted by $|x-y|^{-(d+s)}$, so the reentrant corner, surrounded by only half the aperture of exterior that the outer boundary is, pulls the solution towards zero less strongly, and the imbalance becomes more apparent for smaller $s$.

\begin{figure}[htbp]
	\centering
	\begin{subfigure}[t]{0.31\textwidth}
		\centering
		\includegraphics[width=\textwidth]{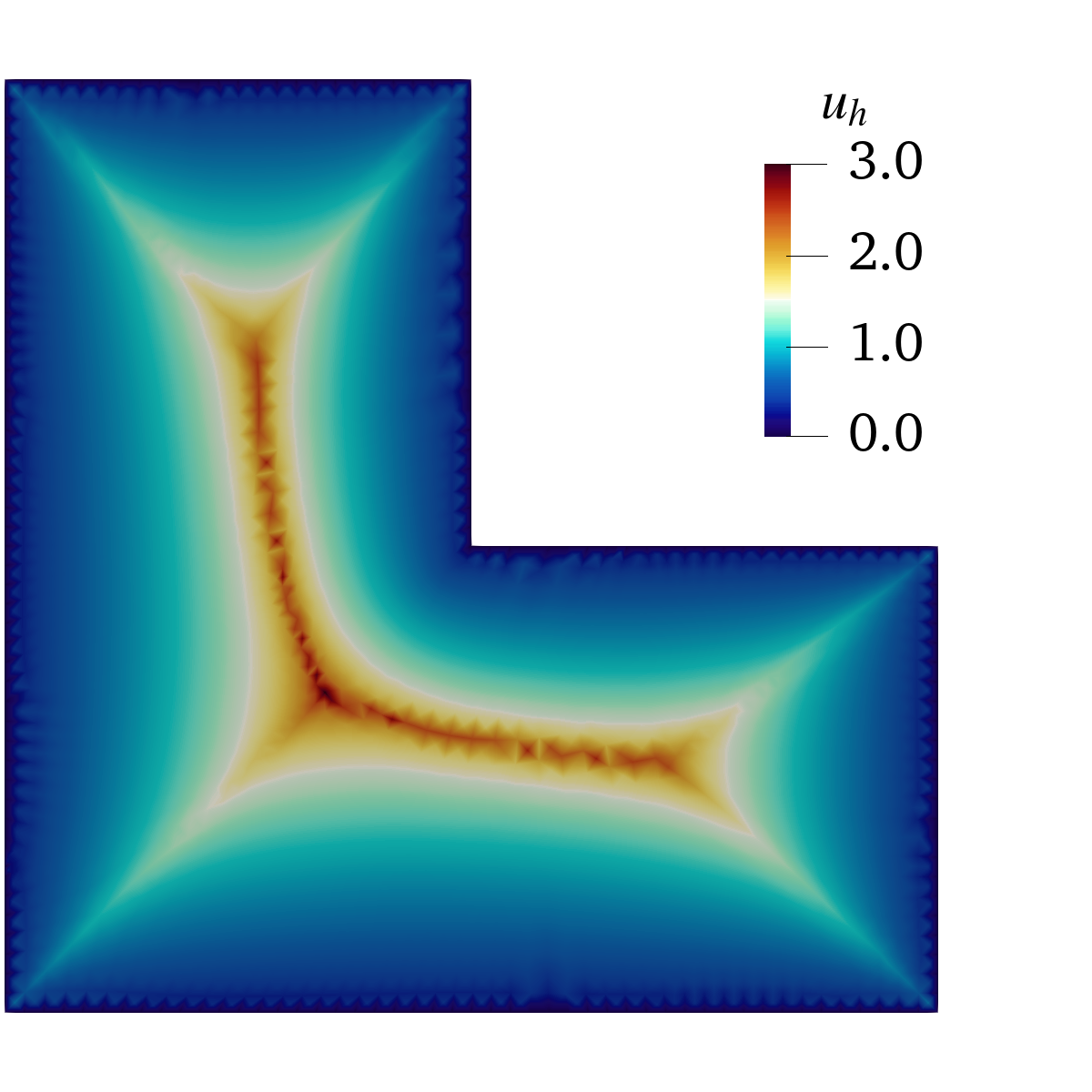}
        \caption{\(s=0.1\)}
		\medskip
		\includegraphics[width=\textwidth]{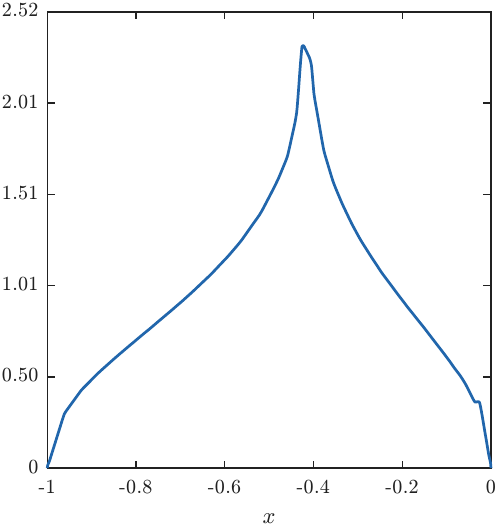}
	\end{subfigure}
	\hfill
	\begin{subfigure}[t]{0.31\textwidth}
		\centering
		\includegraphics[width=\textwidth]{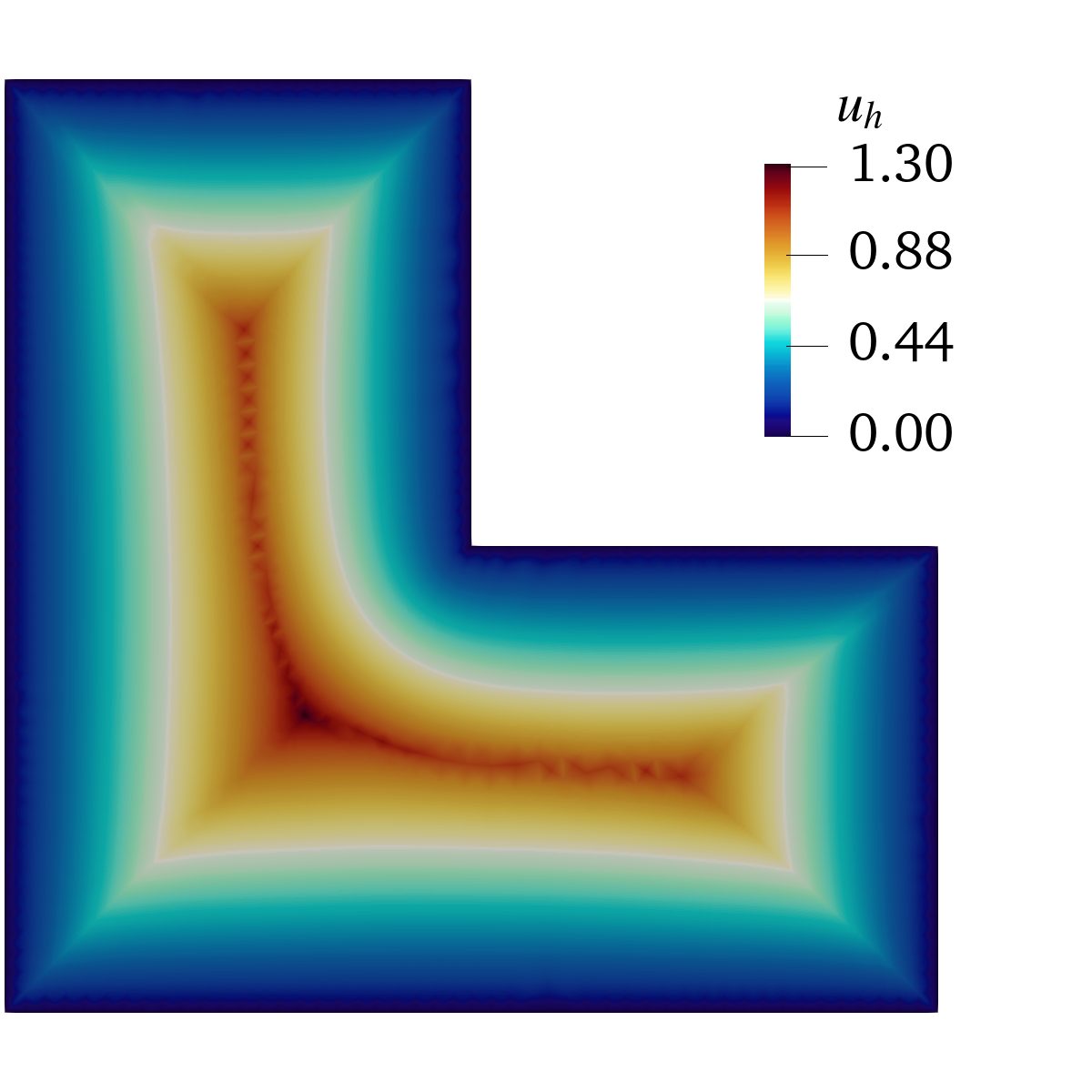}
        	\caption{\(s=0.5\)}
		\medskip
		\includegraphics[width=\textwidth]{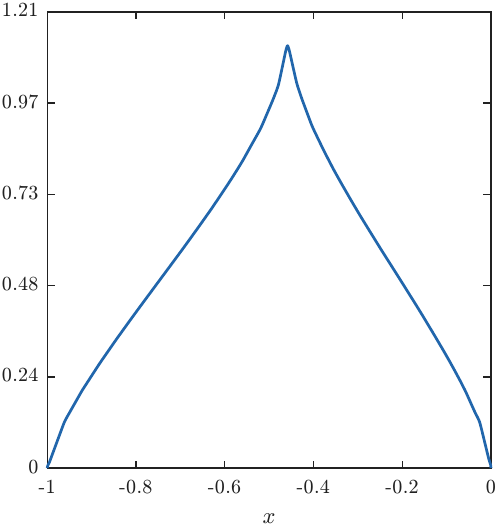}
	\end{subfigure}
	\hfill
	\begin{subfigure}[t]{0.31\textwidth}
		\centering
		\includegraphics[width=\textwidth]{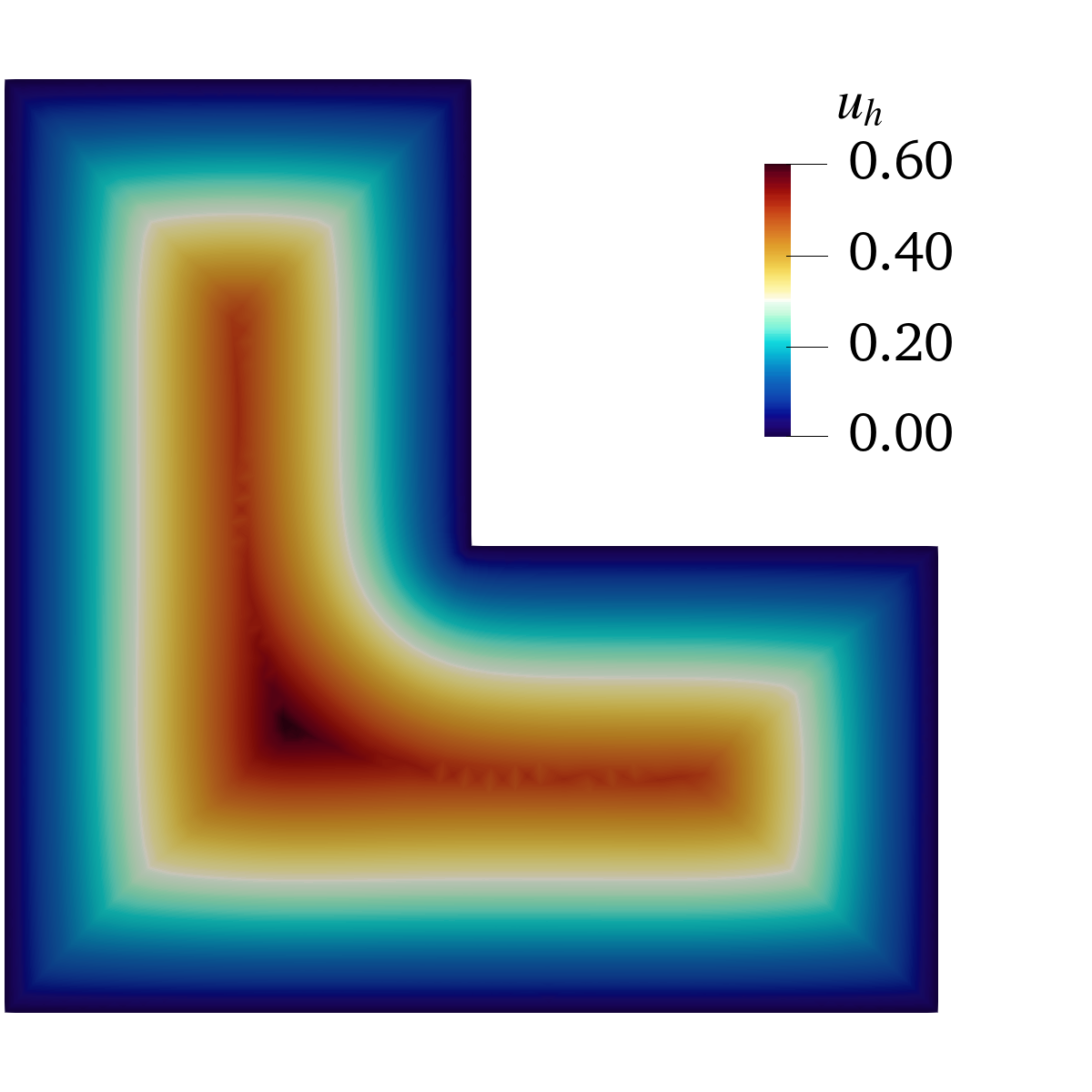}
        	\caption{\(s=0.9\)}
		\medskip
		\includegraphics[width=\textwidth]{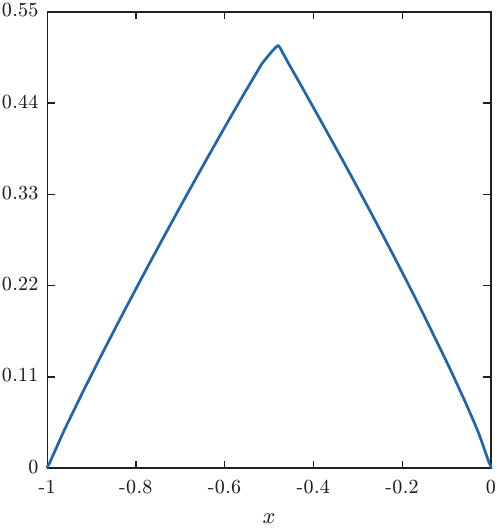}
	\end{subfigure}
	\caption{Computed solutions on the L-shaped domain for \(p=10\).
	Top row: top views; bottom row: profiles along the line \(y=0\).}
	\label{fig:uh-Lshape}
\end{figure}

%%%%%%%%%%%%%%%%%
\subsection{Variable Diffusivity} \label{sec:experiments-variable-diffusivity} 
%%%%%%%%%%%%%%%%%

Finally, we consider linear problems with a spatially varying diffusion coefficient \(A(x)\),
\[
\left\{
\begin{aligned}
	-\operatorname{div}^s\left(A(x)\nabla^s u\right) &= 1 && \text{in } \Omega := (-1/2,1/2)^2, \\
	u &= 0 && \text{in } \mathbb{R}^d \setminus \Omega .
\end{aligned}
\right.
\]
% The purpose of this experiment is to illustrate a class of problems that lies beyond the theoretical framework developed in this manuscript. In particular, we are interested in degenerate configurations in which the diffusivity coefficient \(A\) vanishes on sets of positive measure.

We compare two configurations: one in which $A\equiv 1$, that gives rise to the standard fractional Laplacian, and another one with $A = \chi_\Omega$.
The latter gives rise to a new type of localized fractional operator. Since we are interested in showing the difference between the two problems, we highlight the boundary behavior of solutions. For \(s=0.5\), Figure~\ref{fig:censored-comparison} compares both numerical solutions over the 
same mesh configuration. We used a graded mesh as in \cite{acosta2017fractional}, containing \(15\,808\) nodes and \(31\,598\) triangular elements, of which \(23\,360\) belong to the physical domain. 

The left panel in Figure~\ref{fig:censored-comparison} shows the corresponding solution profiles along the segment $\{(x,0) \colon x \in (-1/2,0)\}$, while the right panel displays the pointwise absolute difference between the two discrete solutions. This comparison illustrates how suppressing the contribution of the fractional gradient outside \(\Omega\) affects the solution, particularly near the boundary.

Moreover, we performed least-squares fittings of the computed solutions using 11 equally spaced points on the segment $\{(x,0) \colon x \in (-0.45,0)\}$, with a model of the form $u(x,0) = C (x+0.5)^\gamma$. For the solution with $A\equiv1$ we obtained $\gamma \approx 0.48$, in good agreement with the theoretical value behavior $u(x) \sim \dist(x,\partial\Omega)^s$ for the fractional Laplacian, while for the solution with $A = \chi_\Omega$
we obtained $\gamma \approx 0.34$. This behavior is significantly different than the one exhibited by the censored fractional Laplacian, which is only defined for $s>1/2$ and for which solutions behave as $u(x) \sim \dist(x,\partial\Omega)^{2s-1}$, cf. \cite{chen2018dirichlet}. 

\begin{figure}[htbp]
	\centering
    \begin{subfigure}[c]{0.48\textwidth}
		\centering
		\includegraphics[
			height=4.55cm,
			keepaspectratio
		]{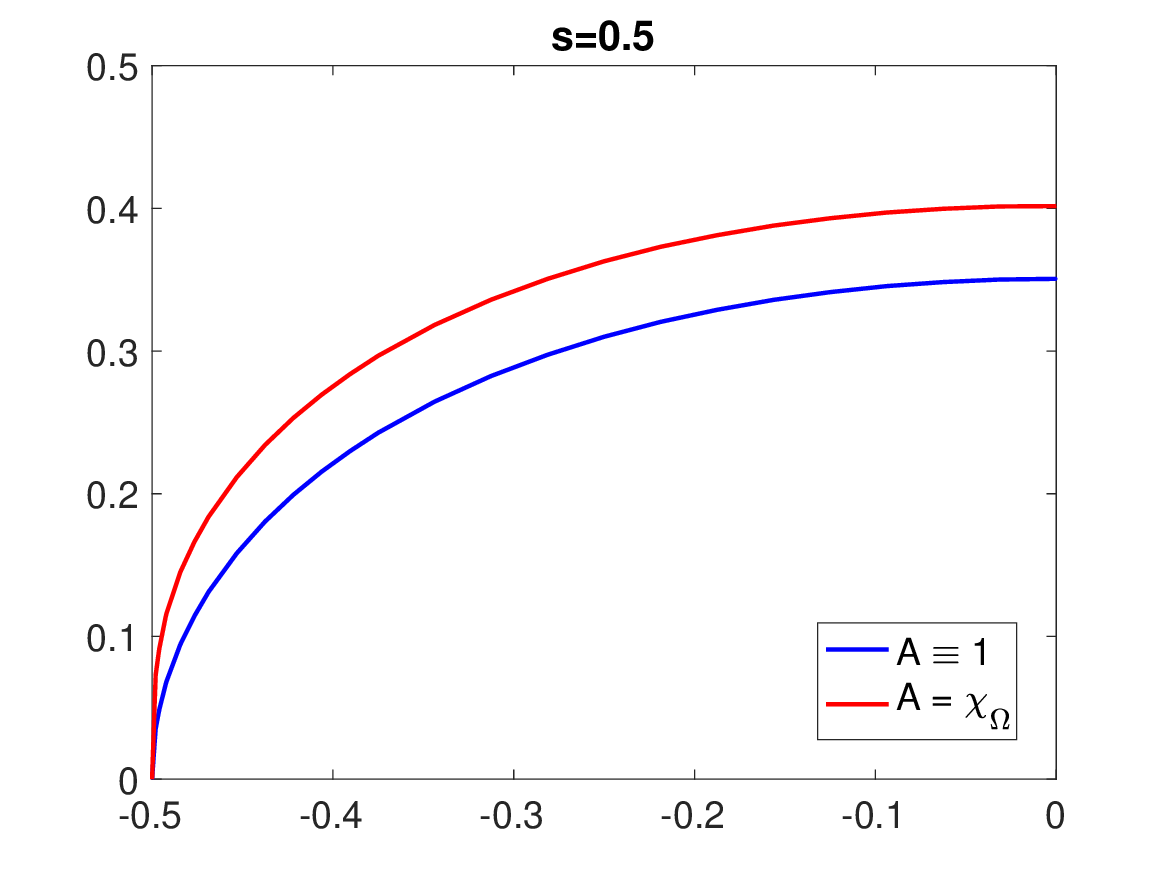}
		\caption{Solution profiles near the boundary.}
	\end{subfigure}
	% \hfill
	\begin{subfigure}[c]{0.48\textwidth}
		\centering
		\includegraphics[
			height=4.55cm,
			keepaspectratio
		]{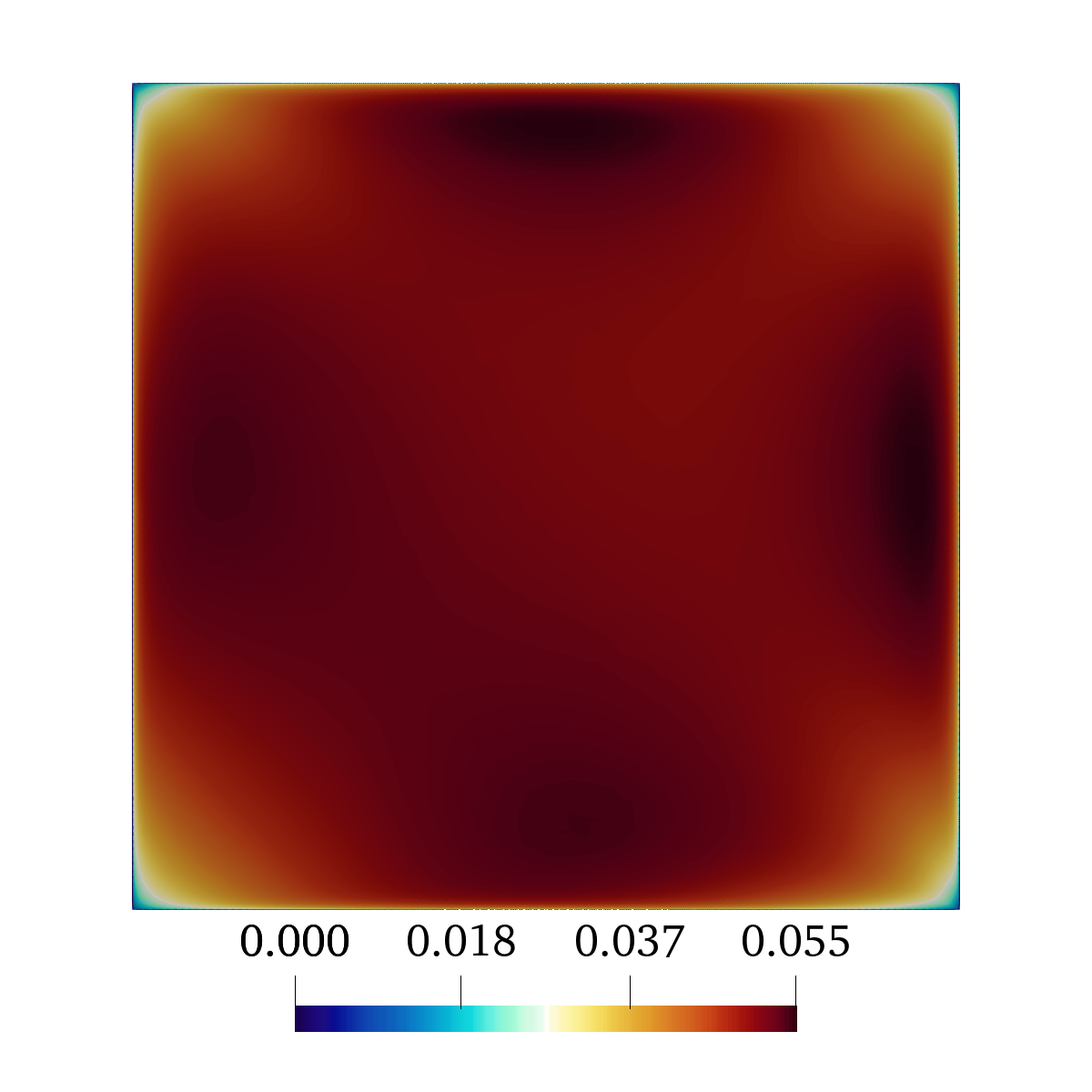}
		\caption{Pointwise absolute difference.}
	\end{subfigure}
	\caption{Comparison between the localized operator ($A = \chi_\Omega$) and the fractional
	Laplacian $(A\equiv 1$) for \(p=2\) and \(s=0.5\).}
	\label{fig:censored-comparison}
\end{figure}

	\section*{Acknowledgments}
	    J. P. Borthagaray and J. C. Rueda  have been supported in part by Agencia Nacional de Investigación e Innovación (ANII, Uruguay), grant 172393, and J. C.  Rueda is also supported by ANII grant 181302.

\bibliographystyle{abbrv}  
\bibliography{bibliografiaBO}
%%%%%%%%%%%%%%%%%
%%%%%%%%%%%%%%%%%
	 
\appendix \section{On the uniform bound for \texorpdfstring{$\mathbf{B}_h$}{Bh}} \label{app:bound-Bh}
%%%%%%%%%%%%%%%%%%%%%%%%%%%
%%%%%%%%%%%%%%%%%%%%%%%%%%%
In this appendix, we are interested in the constant $C_h$ defined in Remark~\ref{rem:Ch}, namely
\begin{equation}\tag{\ref{eq:Bh-inverse-ineq}}\label{eq:Ch}
  \Xnorm{v_h}\le C_h\,\|\Bh v_h\|_{L^p(\Oext;\R^d)}\qquad\forall\,v_h\in V_h,
\end{equation}
and specifically in whether it can be bounded uniformly in $h$. We first  show that, for every $p \in (1,\infty)$, this question reduces to a single scalar quantity, the relative size of the defect $(I-\vPi_h)\nabla^s v_h$, and that this quantity splits exactly into a projection part and an exterior tail, of which only the first is genuinely difficult. That first
part is delicate because the inequality it requires is invariant under
simultaneous dilation of mesh and function, so that no estimate carrying a
positive power of $h$ can establish it.

Afterwards, we focus on the special setting of uniform meshes in $d=1$ dimension and $p=2$. In this setting, the Hilbert structure and the invariance of the mesh under integer translations make the projection part explicitly computable. We prove that $C_h$ is uniformly bounded, by a constant that tends to $\beta_*^{-1}=1.0805$ as $\Oext$ grows.

%%%%%%%%%%%%%%%%%%%%%%%%%%%
\subsection{The approximation question} \label{app:approximation}
%%%%%%%%%%%%%%%%%%%%%%%%%%%

Throughout this section $p\in(1,\infty)$ is arbitrary. We recall the {\em projection defect} 
\[
\delta_h(v):=\|(I-\vPi_h)\grads v\|_{L^p(\Omega_{\mathrm{ext}};\R^d)}
\]
and split
\begin{equation} \label{eq:splitting}
\|(I-\vPi_h)\nabla^s v_h\|_{L^p(\R^d;\R^d)}^p = \delta_h(v_h)^p + \|\nabla^sv_h\|^p_{L^p(\R^d\setminus\Oext;\R^d)}.
\end{equation}
The latter term is an exterior tail that can be controlled if the computational domain is taken large enough, uniformly in $h$. Arguing in the same fashion as in Lemma~\ref{lem:tail-residual}, we deduce the following result.

\begin{Lemma}\label{lem:tail}
Let $\Omega\subset B_R(0) $ and let $\Oext\supset \Omega$ with $\dist(\partial\Omega, \partial\Oext)=H \ge R$. Then there is $C=C(d,s,p)$ such that, for all
$v\in\widetilde X^{s,p}(\Omega)$, 
\begin{equation}\label{eq:tail}
  \|\nabla^s v\|_{L^p(\R^d\setminus\Oext;\R^d)}
  \ \le\ C\Big(\frac RH\Big)^{\frac d{p'}+s}\,\Xnorm{v}.
\end{equation}
\end{Lemma}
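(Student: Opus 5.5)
Since $v$ vanishes outside $\Omega$, the argument follows Lemma~\ref{lem:tail-residual} and the exterior estimate in Lemma~\ref{lem:contiguity}. The only change is that we keep track of the ratio $R/H$ and bound by $\Xnorm{v}$ through Poincaré, instead of by $\|v\|_{L^1}$ directly.

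\textbf{Pointwise decay.} For $x\in\R^d\setminus\Oext$ only $y\in\Omega$ contribute to \eqref{eq:def-grads}, and $v(x)=0$. Hence
\[
|\grads v(x)|\le \mu(d,s)\int_\Omega \frac{|v(y)|}{|x-y|^{d+s}}\,dy .
\]
Assuming without loss of generality that $0\in\Omega$ (as in Lemma~\ref{lem:contiguity}), $\Omega\subset B_R(0)$ gives $|y|\le R$. Moreover, $\dist(x,\Omega)\ge H$ for $x\notin\Oext$, since any segment from $x$ to a point of $\Omega$ crosses $\partial\Oext$ and then $\partial\Omega$. We want $|x-y|\ge c|x|$ uniformly in $y\in\Omega$. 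We have $|x|\le|x-y|+R$, and $|x-y|\ge H\ge R$, so $|x|\le 2|x-y|$. Therefore
\[
|\grads v(x)|\le C(d,s)\,|x|^{-d-s}\|v\|_{L^1(\Omega)}.
\]
For all such $x$ we also have $|x|\ge |x-y|-|y|\ge H-R$. This lower bound is useless when $H=R$, so it is better to use $|x|\ge \dist(x,\Omega)\ge H$, valid because $0\in\Omega$. Combining, $|x|\ge H$ and $|x-y|\ge |x|/2$.

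\textbf{Integration.}
\[
\|\grads v\|^p_{L^p(\R^d\setminus\Oext)}\le C\|v\|^p_{L^1(\Omega)}\int_{|x|\ge H}|x|^{-(d+s)p}\,dx = C H^{d-(d+s)p}\|v\|_{L^1(\Omega)}^p .
\]
The integral converges because $(d+s)p>d$. Taking $p$-th roots yields the factor $H^{-(d/p'+s)}$, since $d/p-d-s=-(d/p'+s)$.

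\textbf{Scaling.} It remains to show $\|v\|_{L^1(\Omega)}\le C R^{d/p'+s}\Xnorm{v}$ with $C=C(d,s,p)$. Hölder's inequality gives $\|v\|_{L^1}\le |B_R|^{1/p'}\|v\|_{L^p}\le CR^{d/p'}\|v\|_{L^p}$. For a Poincaré inequality with explicit scaling, we apply \eqref{eq:Poincare} on the fixed ball $B_1$ to the dilate $v(R\,\cdot)$, which belongs to $\widetilde X^{s,p}(B_1)$. Since $\grads$ is homogeneous of degree $s$, this gives $\|v\|_{L^p(\Omega)}\le C(d,s,p)R^{s}\Xnorm{v}$. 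Combining the three steps yields \eqref{eq:tail}.

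The main obstacle is this last step. The constant in \eqref{eq:Poincare} depends on $\Omega$, so the bound must be routed through $B_R(0)$ and a dilation to get a constant depending only on $(d,s,p)$. The geometric lower bound $|x|\ge H$, which uses $0\in\Omega$, is the other point needing care.
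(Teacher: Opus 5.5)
Your proof is correct and follows essentially the paper's argument, which the paper obtains in the same fashion as Lemma~\ref{lem:tail-residual}: pointwise decay of $\grads v$ outside $\Oext$ coming from the support of $v$, integration of $|x|^{-(d+s)p}$ over $\{|x|\ge H\}$, then H\"older's inequality and a Poincar\'e inequality with $R^s$ scaling; your dilation to $B_1$ justifies that scaling more carefully than the paper does, and it makes the constant depend only on $(d,s,p)$. One small point: translating so that $0\in\Omega$ only preserves $\Omega\subset B_{2R}(0)$, not $\Omega\subset B_R(0)$, which costs a harmless factor depending on $(d,s,p)$; alternatively, skip the translation and integrate the bound $|x-y|\ge\max\{H,|x|/2\}$ directly over $\R^d$.
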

% \begin{proof}
% Since $v$ vanishes outside $\Omega$ and $\dist(x,\Omega)\ge H$ for $x\in\Oext^c$,
% \[
%   |\nabla^sv(x)|\le\mu(d,s)\int_\Omega\frac{|v(y)|}{|x-y|^{d+s}}\,dy  \le\mu(d,s) \dist(x,\Omega)^{-(d+s)} \|v\|_{L^1(\Omega)} .
% \]
% Raising to the $p$-th power, integrating over $\{\dist(x,\Omega)\ge H\}$ and using the Poincar\'e inequality gives
% \[ \begin{split}
% \|\nabla^sv\|_{L^p(\R^d\setminus\Oext)} & \le C(d,s,p) H^{-(d/p'+s)}\|v\|_{L^1(\Omega)} \\
% & \le C(d,s,p) H^{-(d/p'+s)} |\Omega|^{1/p'} \|v\|_{L^p(\Omega)} \\
% & \le C(d,s,p,\Omega) H^{-(d/p'+s)} R^{d/p'} C_P \Xnorm{v},
% \end{split} \]
% where $C_P$ denotes the constant in the Poincar\'e inequality in $\Omega$, which scales like the $s$-th power of the $\diam(\Omega)$.
% \end{proof}

The question of whether $C_h$ is uniformly bounded therefore reduces to proving the following condition: the $L^p$ defect of $\nabla^s V_h$ under the projection onto piecewise constants must be a uniformly strict fraction of the norm. Concretely, let us assume that the inequality
\begin{equation} \label{eq:cond-magica}
 \delta_h(v_h)\le\lambda_0\Xnorm{v_h}\quad\text{with }\lambda_0<1,
\end{equation}
holds. If that is the case, then one can take $H$ sufficiently large so that 
\[
  C\Big(\frac RH\Big)^{\frac d{p'}+s}=:\lambda_1
\]
satisfies $\lambda_0^p+\lambda_1^p = \lambda^p <1$ and then replacing \eqref{eq:tail} and \eqref{eq:cond-magica} into \eqref{eq:splitting}, we deduce
\begin{equation}\label{eq:angle}
\|(I-\vPi_h)\nabla^s v_h\|_{L^p(\R^d;\R^d)} \le\lambda \Xnorm{v_h}\qquad\forall\,v_h\in V_h .
\end{equation}
Finally, the fact that $\Bh v_h$ is supported in $\Oext$ and an application of the triangle inequality give
\begin{equation} \label{eq:triangle-p} \begin{split}
  \Xnorm{v_h}=\|\nabla^sv_h\|_{L^p(\R^d)}
 & \le\|\Bh v_h\|_{L^p(\Oext)}+\|(I-\vPi_h)\nabla^s v_h\|_{L^p(\R^d;\R^d)} \\
 & \le\|\Bh v_h\|_{L^p(\Oext)}+\lambda\Xnorm{v_h},
\end{split} \end{equation}
and \eqref{eq:Ch} follows with $C_h = (1-\lambda)^{-1}$.

\begin{remark}\label{rem:critical} 
Since $\delta_h(v_h)\ge\big(\sum_T\min_{\mathbf c\in\R^d}
\|\nabla^sv_h-\mathbf c\|^p_{L^p(T)}\big)^{1/p}$, condition \eqref{eq:cond-magica} is a statement about best $L^p$ approximation by piecewise constants. It is not, however, of the usual type: one needs the error bounded by $\lambda_0$ times the same norm, with $\lambda_0<1$, rather than by a positive power of $h$ multiplied by a higher-order seminorm. 
No estimate of the latter form can help: writing $\nabla^sv_h=I_{1-s}\nabla v_h$ with $\nabla v_h\in L_h$, and splitting the kernel of $I_{1-s}$ at distance $h$, Young's inequality gives
\[
  \delta_h(v_h)\le C\,h^{1-s}\|\nabla v_h\|_{L^p(\R^d)}   \qquad\text{for every }p\in(1,\infty),
\]
which against the inverse inequality $\|\nabla v_h\|_{L^p}\lesssim h^{-(1-s)}\Xnorm{v_h}$ yields only $O(1)$. The gain of $1-s$ derivatives from $I_{1-s}$ is exactly cancelled by the mesh-scale
roughness of piecewise constants.
\end{remark}

\begin{remark}\label{rem:scaling}
The ratio in \eqref{eq:cond-magica} is invariant under the simultaneous dilation of both the mesh and the function: $\vPi_h$ commutes with dilations of the mesh and $I_{1-s}$ is homogeneous of degree $1-s$, so the factors $\rho^{1-s}\rho^{d/p}$ cancel between the numerator and denominator. Thus, for self-similar mesh families, uniformity in $h$ follows automatically from homogeneity. What remains is a dimensionless quantity determined by the mesh pattern, $s$, and $p$. Equivalently, mesh refinement can be regarded as domain growth at fixed mesh size.
\end{remark}

\subsection{The case \texorpdfstring{$p=2$}{p=2} and uniform meshes in \texorpdfstring{$d=1$}{d=1}}\label{app:p2}
%%%%%%%%%%%%%%%%%%%%%%%%%%%
The validity of \eqref{eq:cond-magica} becomes a delicate question for which it seems one cannot rely on general approximation theory results. The case $p=2$ offers the full machinery of Fourier analysis, in particular Plancherel's identity. Orthogonality allows to replace the use of the triangle inequality in the reasoning in \eqref{eq:triangle-p}, turning the sufficient condition \eqref{eq:cond-magica} into a characterization and sharpens the constant. Indeed, for every $v_h\in V_h$,
\begin{equation}\label{eq:pythagoras}
\|v_h\|^2_{\widetilde X^{s,2}(\Omega)} =  \|\Bh v_h\|^2_{L^2(\Oext)} + \|(I-\vPi_h)\nabla^s v_h\|_{L^2(\R^d;\R^d)}^2 .
\end{equation}
Consequently, \eqref{eq:Ch} holds with $C_h$ uniformly bounded if and only if \eqref{eq:angle} holds for some $\lambda<1$ independent of $h$, and in that case  $C_h=(1-\lambda^2)^{-1/2}.$

We now move on to prove the validity of \eqref{eq:angle} in the specific setting of $d=1$, $p=2$, and uniform meshes. We first assume a mesh defined on the whole real line, namely, we 
let the mesh be $h\Z$, with
\[
  V_h^{\R}:=\Big\{v_h=\sum_{j\in\Z}u_j\varphi_j: \ \mbox{supp}(v_h) \text{ is compact}\Big\}.
\]
By Remark \ref{rem:scaling} we may set $h=1$. On this mesh, $\vPi_h^\R$ is the $L^2$-orthogonal projection onto piecewise constants over the intervals $(j,j+1)$, $j\in \Z$.

In the following, we write $\varphi(x)=(1-|x|)_+$, $\chi=\chi_{(0,1)}$ and, for $j\in \Z$, we take $\varphi_j=\varphi(\cdot-j)$,  $\chi_j=\chi(\cdot-j)$. For a discrete function of the form $v_h = \sum_{j\in\Z}u_j\varphi_j$, we consider its discrete Fourier transform $U(\theta)=\sum_ju_je^{-i j\theta}$, extended $2\pi$-periodically. 

\begin{Lemma}\label{lem:bloch}
Let $h=1$. For every $v_h=\sum_{j\in\Z}u_j\varphi_j\in V_h^{\R}$,
\[
  \|\nabla^sv_h\|^2_{L^2(\R)}= \frac{4^{s}}{2\pi} \int_{-\pi}^{\pi}A_s(\theta)|U(\theta)|^2d\theta,
  \
  \|\vPi_h^\R\nabla^sv_h\|^2_{L^2(\R)}= \frac{4^{s}}{2\pi} \int_{-\pi}^{\pi}B_s(\theta)|U(\theta)|^2d\theta,
\]
where, with $t=\theta/2$,
\begin{equation}\label{eq:symbols}
  A_s(\theta)=\left(\sum_{k\in\Z}|t+\pi k|^{2s-4}\right) \sin^4t,
  \qquad
  B_s(\theta)=\left(\sum_{k\in\Z}|t+\pi k|^{s-3}\right)^2 \sin^6t .
\end{equation}
%In particular both carry the \emph{same} positive prefactor.
\end{Lemma}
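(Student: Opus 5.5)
Both identities will follow from Plancherel's theorem together with Poisson summation (a Bloch decomposition). The plan is to write $\nabla^s v_h$ as a Fourier multiplier applied to $v_h$, factor $\widehat{v_h}$ as a periodic symbol times the transform of the hat function, and then fold the frequency line onto $[-\pi,\pi]$.

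We use the convention $\widehat g(\xi)=\int g(x)e^{-ix\xi}dx$. In $d=1$, $\nabla^s$ is the multiplier $i\xi|\xi|^{s-1}$ up to the normalisation fixed in Lemma~\ref{lem:projection-defect}; the resulting $2\pi$-powers are absorbed into the prefactor $4^s/(2\pi)$ at the end. Since $v_h=\sum_j u_j\varphi(\cdot-j)$, we have $\widehat{v_h}(\xi)=U(\xi)\widehat\varphi(\xi)$, where $U$ is $2\pi$-periodic and
\[
\widehat\varphi(\xi)=\frac{\sin^2(\xi/2)}{(\xi/2)^2}.
\]
Plancherel then gives
\[
\|\nabla^s v_h\|^2=\frac1{2\pi}\int_\R |\xi|^{2s}\,|\widehat\varphi(\xi)|^2\,|U(\xi)|^2\,d\xi .
\]
We split $\R$ into the translates $\theta+2\pi k$ with $\theta\in[-\pi,\pi]$ and use periodicity of $U$ and of $\sin^2(\xi/2)$. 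With $t=\theta/2$, so that $\xi/2=t+\pi k$, we get
\[
|\xi|^{2s}|\widehat\varphi|^2=2^{2s}|t+\pi k|^{2s}\,\frac{\sin^4 t}{|t+\pi k|^4}.
\]
Summing over $k$ yields $4^sA_s(\theta)$. Interchanging sum and integral is justified by Tonelli, since $2s-4<-1$.

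For the projected gradient, put $g=\nabla^s v_h$. On $(j,j+1)$ the projection takes the value $c_j=\int g\,\chi_j$, so
\[
\|\vPi^\R_h g\|^2=\sum_j|c_j|^2 .
\]
By Parseval for the sequence $(c_j)$, this equals $\frac1{2\pi}\int_{-\pi}^{\pi}|C(\theta)|^2\,d\theta$, where $C(\theta)=\sum_j c_je^{-ij\theta}$. Next, $c_j=(g*\check\chi)(j)$ with $\check\chi(x)=\chi(-x)$, so Poisson summation gives
\[
C(\theta)=\sum_k \widehat g(\theta+2\pi k)\,\overline{\widehat\chi(\theta+2\pi k)}.
\]
Here $\widehat g(\xi)=i\xi|\xi|^{s-1}U(\theta)\widehat\varphi(\xi)$ and $|\widehat\chi(\xi)|=|\sin(\xi/2)|/|\xi/2|$.

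The main obstacle is the phases. Poisson summation gives a sum of complex terms, not of moduli, so we must check that all terms share a common phase. This is what turns the sum into $\bigl(\sum_k|t+\pi k|^{s-3}\bigr)|\sin t|^3$. Write $\xi/2=t+\pi k$. Then:
\begin{itemize}
\item $\overline{\widehat\chi(\xi)}=e^{i\xi/2}\,\sin(\xi/2)/(\xi/2)$, with $e^{i\xi/2}=e^{it}(-1)^k$ and $\sin(\xi/2)=(-1)^k\sin t$, so the two factors of $(-1)^k$ cancel;
\item $\widehat\varphi(\xi)=\sin^2 t/(t+\pi k)^2$ is positive;
\item $i\xi|\xi|^{s-1}=i\,2^s\,\operatorname{sgn}(t+\pi k)\,|t+\pi k|^{s}$.
\end{itemize}
Collecting these,
\[
\widehat g(\xi)\,\overline{\widehat\chi(\xi)}=i\,2^s e^{it}\,U(\theta)\,\sin^3 t\;\frac{\operatorname{sgn}(t+\pi k)}{(t+\pi k)}\,|t+\pi k|^{s-2}.
\]
Since $\operatorname{sgn}(t+\pi k)/(t+\pi k)=1/|t+\pi k|$, every term has the common factor $i\,2^se^{it}U(\theta)\sin^3 t$ times the positive weight $|t+\pi k|^{s-3}$. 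Absolute convergence holds because $s-3<-1$. Squaring the modulus gives $|C(\theta)|^2=4^sB_s(\theta)|U(\theta)|^2$, which is the second identity.

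Finally we check the normalising constants: the factor $2^s$ from $|\xi|^s=2^s|\xi/2|^s$ and the $1/(2\pi)$ from Parseval. Both identities carry the same prefactor $4^s/(2\pi)$, so the multiplier normalisation cancels in any ratio. For the finitely supported $(u_j)$ in $V_h^\R$, $U$ is a trigonometric polynomial and every interchange of sum and integral above is rigorous.
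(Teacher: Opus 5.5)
Your proposal is correct and follows essentially the same route as the paper. For $A_s$ you use Plancherel and fold onto $[-\pi,\pi]$. For $B_s$ you use orthonormality of the $\chi_j$, Parseval for $(c_j)$, and the identification $\sum_j c_j e^{-ij\theta}=U(\theta)S(\theta)$: you obtain it by Poisson summation, while the paper computes $c_j$ directly as a Fourier coefficient of $US$ via Plancherel, which only needs Fubini. The phase bookkeeping is the same, with the factors $(-1)^k$ cancelling and $\operatorname{sgn}(\xi_k)/\xi_k^3=|\xi_k|^{-3}$.

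The only thing to tidy up is your hedge on the normalisation. With $\widehat g(\xi)=\int g\,e^{-ix\xi}\,dx$, the constant $\mu(1,s)$ makes the symbol of $\nabla^s$ exactly $i\,\operatorname{sgn}(\xi)|\xi|^{s}$; the $2\pi$'s in Lemma~\ref{lem:projection-defect} only reflect the $e^{-2\pi i x\xi}$ convention. So there is nothing to absorb, and your computation gives the exact prefactor $4^s/(2\pi)$, not merely the correct ratio.
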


\begin{proof}
Throughout the following, we write $\xi_k:=\theta+2\pi k=2(t+\pi k)$.
In first place we note that, since the mesh is invariant under integer translations, we have 
\[
\widehat{v_h}=\sum_{j\in\Z}u_j\widehat{\varphi_j} = \left( \sum_{j\in\Z}u_j e^{-i j \cdot} \right)\widehat\varphi = U\widehat\varphi.
\]

We have the Fourier transforms
\begin{equation}\label{eq:phi-chi-hat}
  \widehat\varphi(\xi)=\frac{4\sin^2(\xi/2)}{\xi^2}\ \ge0, \qquad   \widehat\chi(\xi)=\frac{1-e^{-i\xi}}{i\xi}  =e^{-i\xi/2}\,\frac{2\sin(\xi/2)}{\xi} 
\end{equation}
and, since $\nabla^s=I_{1-s}\partial_x$, its multiplier is $  m_s(\xi)=i\,\mathrm{sgn}(\xi)|\xi|^{s}.$
%\begin{equation}\label{eq:msymbol}
%  m_s(\xi)=i\,\mathrm{sgn}(\xi)|\xi|^{s}.
%\end{equation}

%\jp{\emph{Step 0$'$: integrability.} Both $m_s\widehat\varphi$ and
%$m_s\widehat\varphi\widehat\chi$ are $O(|\xi|^s)$ near the origin and, for
%$|\xi|\ge1$, are bounded by $|\xi|^{s-2}$ and $|\xi|^{s-3}$ respectively; since
%$0<s<1$, both therefore belong to $L^1\cap L^2(\R)$.
%We use the two memberships
%for different purposes: $m_s\widehat\varphi\in L^2$ makes the periodization of
%its square converge, which is what Plancherel requires in Step 1 (and is visible
%in the convergence of $\sum_k|t+\pi k|^{2s-4}$), while
%$m_s\widehat\varphi\widehat\chi\in L^1$ makes the periodization of its modulus
%converge, which defines $S(\theta)$ pointwise and justifies interchanging sum
%and integral in Step 2 (visible in $\sum_k|t+\pi k|^{s-3}$).
%}

%\emph{Step 1: the symbol $A_s$.} 
By Plancherel and the periodicity of $U$,
\[
  \|\nabla^sv_h\|^2_{L^2(\R)}  =
\frac1{2\pi} \|  i \mathrm{sgn}(\cdot)|\cdot|^{s} \widehat{v_h} \|_{L^2(\R)}^2 =
  \frac1{2\pi}\int_{-\pi}^{\pi}|U(\theta)|^2   \sum_{k}|\xi_k|^{2s}\widehat\varphi(\xi_k)^2\,d\theta .
\]
As $\sin(\xi_k/2)=(-1)^k\sin t$, \eqref{eq:phi-chi-hat} gives
$\widehat\varphi(\xi_k)=4\sin^2t/\xi_k^2$ and therefore
\[\sum_k|\xi_k|^{2s}\widehat\varphi(\xi_k)^2 = 4^{s} \sin^4t\sum_k|t+\pi k|^{2s-4}=4^sA_s(\theta).
\]
% We note that the series is convergent since $2s-4<-1$.

On the other hand, since $\{\chi_j\}$ is orthonormal, $\vPi_h^\R g=\sum_jc_j\chi_j$ with $c_j:=\langle g,\chi_j\rangle$ and $\|\vPi_h^\R g\|^2_{L^2(\R)}=\sum_j|c_j|^2$. Applied with $g=\nabla^sv_h$, Plancherel and $e^{i j\xi_k}=e^{i j\theta}$ give
\[
  c_j=\frac1{2\pi}\int_{-\pi}^{\pi}e^{i j\theta}U(\theta)S(\theta)\,d\theta,  \qquad   S(\theta):=\sum_{k\in\Z}m_s(\xi_k)\widehat\varphi(\xi_k)\overline{\widehat\chi(\xi_k)} ,
\]
so that $c_j$ is the $(-j)$-th Fourier coefficient of $US$ and Parseval yields
\begin{equation}\label{eq:proj-norm}
  \|\vPi_h^\R \nabla^sv_h\|^2_{L^2(\R)}  =\frac1{2\pi}\int_{-\pi}^{\pi}|S(\theta)|^2|U(\theta)|^2\,d\theta .
\end{equation}

Next, by \eqref{eq:phi-chi-hat} and
 $e^{i\xi_k/2}=(-1)^ke^{i t}$, we deduce
% $\overline{\widehat\chi(\xi_k)}=e^{i\xi_k/2}2\sin(\xi_k/2)/\xi_k$, and
% since $e^{i\xi_k/2}=(-1)^ke^{i t}$ and
% $\sin(\xi_k/2)=(-1)^k\sin t$, the two factors $(-1)^k$ cancel:
\[
  \overline{\widehat\chi(\xi_k)}=e^{i t}\,\frac{2\sin t}{\xi_k} \qquad\text{for every }k .
\]
Thus, we obtain
$  m_s(\xi_k)\widehat\varphi(\xi_k)\overline{\widehat\chi(\xi_k)}  
  % =8i e^{i t}\sin^3t \frac{\mathrm{sgn}(\xi_k)|\xi_k|^s}{\xi_k^3} 
  =8i e^{i t}\sin^3t |\xi_k|^{s-3}
$
and 
\[
|S(\theta)|^2=4^s\left(\sum_k|t+\pi k|^{s-3}\right)^2 \sin^6t =4^sB_s(\theta).
\]
Substituting into \eqref{eq:proj-norm} completes the proof.
\end{proof}

%\begin{remark}\label{rem:nocancel}
%Step 3 is the structural heart of the computation, and both cancellations are
%consequences of the geometry: the first of the half-cell offset between nodes
%and cells, the second of $\nabla^s$ being odd. Had either failed, $B_s$ would
%vanish at some $\theta$ and $\Bh$ would degenerate. The asymmetry between
%\eqref{eq:symbols} --- a sum of squares in $A_s$, a square of a sum in $B_s$ ---
%reflects that $L_h$ is one-dimensional in each Bloch fibre, so that $\vPi_h$ acts
%there as a rank-one projection and all aliases are tested against the same
%$\chi_j$.
%\end{remark}

\begin{theorem}\label{thm:line}
Let $R \colon [0,1] \times (0,\pi/2] \to \R$, $R(s,t) =B_s(2t)/A_s(2t)$, that is,
\begin{equation}\label{eq:Rs}
R(s,t)=\frac{\big(\sum_{k\in\Z}|t+\pi k|^{s-3}\big)^2}{\sum_{k\in\Z}|t+\pi k|^{2s-4}} \sin^2t .
\end{equation}
Then $R$ extends to a continuous, strictly positive function on $[0,1]\times[0,\pi/2]$, with $R(s,0)\equiv1$ and $R(1,t)\equiv1$.
Consequently, $\beta_s^2:=\inf_{t\in[0,\pi/2]}R(s,t)>0$ for every $s\in[0,1]$ and
$\beta_*^2:=\inf_{s\in[0,1]}\beta_s^2>0$, and for every $s\in(0,1)$,
%%Consequently, there exists $ \beta_* >0$ such that
%\[
%\beta_s^2  := \inf_{t\in[0,\pi/2]}R(s,t)\ge  \beta_*^2,
%\]
%and for every $s\in(0,1)$, 
every $h>0$ and every $v_h\in V_h^{\R}$,
\begin{equation}\label{eq:line-bound}
  \|\vPi_h^\R\nabla^sv_h\|_{L^2(\R)}\ \ge\ \beta_s \|\nabla^sv_h\|_{L^2(\R)} .
\end{equation}
\end{theorem}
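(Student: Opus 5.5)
The plan has three parts: continuity and positivity of $R$, the two boundary identities, and the Fourier reduction of \eqref{eq:line-bound}.

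\textbf{Continuity and positivity of $R$.} Fix $(s,t)$ with $t\in(0,\pi/2]$. Every $k$ has $|t+\pi k|\ge t>0$ and $|t+\pi k|\ge \pi(|k|-1/2)$. The exponents satisfy $s-3\le-2$ and $2s-4\le-2$, so both series converge uniformly on $[0,1]\times[\eta,\pi/2]$ for each $\eta>0$ by a Weierstrass $M$-test. Hence $R$ is continuous on $[0,1]\times(0,\pi/2]$. It is strictly positive there because $\sin t>0$ and each sum is positive.

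\textbf{Extension to $t=0$.} The $k=0$ term dominates as $t\to0^+$. Write
\[
\sum_k|t+\pi k|^{s-3}=t^{s-3}\bigl(1+t^{3-s}E_1(s,t)\bigr),\qquad \sum_k|t+\pi k|^{2s-4}=t^{2s-4}\bigl(1+t^{4-2s}E_2(s,t)\bigr),
\]
where $E_1,E_2$ are the sums over $k\neq0$. These are bounded uniformly for $(s,t)\in[0,1]\times[0,\pi/2]$, since $|t+\pi k|\ge\pi/2$ for $k\ne0$. Then
\[
R(s,t)=\frac{\sin^2 t}{t^2}\cdot\frac{(1+t^{3-s}E_1)^2}{1+t^{4-2s}E_2}.
\]
Because $3-s\ge2$ and $4-2s\ge2$, the second factor tends to $1$ as $t\to0$ uniformly in $s$, and so does the first. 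This gives a continuous extension with $R(s,0)=1$.

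\textbf{The endpoint $s=1$.} Here $R(1,t)=\bigl(\sum_k|t+\pi k|^{-2}\bigr)^2\sin^2 t\,/\,\sum_k|t+\pi k|^{-2}=\sin^2 t\sum_k (t+\pi k)^{-2}$. The classical identity $\sum_k (t+\pi k)^{-2}=\csc^2 t$ gives $R(1,t)\equiv1$. This matches the local case: for $s=1$ one has $\nabla v_h\in L_h$, so the projection is exact.

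\textbf{Infima.} $R$ is continuous and strictly positive on the compact set $[0,1]\times[0,\pi/2]$: on $t>0$ by the first step, and on $t=0$ where it equals $1$. Hence $\beta_*^2=\min R>0$, and $\beta_s^2\ge\beta_*^2$ for each $s$.

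\textbf{Reduction of \eqref{eq:line-bound} to the symbols.} By Remark~\ref{rem:scaling} I may take $h=1$; the ratio is dilation invariant, which one checks directly on the Fourier side. Lemma~\ref{lem:bloch} writes both norms as integrals against $|U(\theta)|^2$. So it suffices to prove the pointwise bound $B_s(\theta)\ge\beta_s^2A_s(\theta)$ for a.e.\ $\theta\in[-\pi,\pi]$.

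Both symbols depend on $t=\theta/2$ only through $|t|$ modulo $\pi$: the sums are $\pi$-periodic and even in $t$, and $\sin^4$ and $\sin^6$ are even. After reindexing, this reduces $t\in[-\pi/2,\pi/2]$ to $t\in[0,\pi/2]$. For $t\ne0$ we have $A_s(2t)>0$, so $B_s/A_s=R(s,|t|)\ge\beta_s^2$, and integrating gives \eqref{eq:line-bound}.

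\textbf{Expected obstacle.} The only delicate point is the uniformity of the $t\to0$ limit in $s$, including near $s=0$, where the exponent $s-3$ is most negative and the series' convergence is weakest. The factorization above handles this, since the tail sums over $k\ne0$ stay bounded uniformly on the whole closed rectangle.
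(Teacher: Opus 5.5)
Your proposal is correct and follows the paper's proof essentially step by step. It uses uniform convergence of both series away from $t=0$, isolates the $k=0$ term to get $R(s,t)\to1$ uniformly in $s$ as $t\to0^+$, invokes compactness of $[0,1]\times[0,\pi/2]$, and inserts the pointwise bound $B_s\ge\beta_s^2A_s$ into Lemma~\ref{lem:bloch}. Your explicit factorization, the evenness reduction from $\theta\in[-\pi,\pi]$ to $t\in[0,\pi/2]$, and the check $R(1,t)\equiv1$ via $\sum_k(t+\pi k)^{-2}=\csc^2t$ only make explicit what the paper leaves implicit. One small slip in your closing remark: the $k\neq0$ tails converge most slowly at $s=1$, not $s=0$; what is strongest near $s=0$ is the $k=0$ singularity, which your factorization already handles.
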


\begin{proof}
For $s\in[0,1]$ both series converge locally uniformly in $(s,t)$ away from $t=0$, and $R$ is continuous and positive on $[0,1]\times(0,\pi/2]$. 
To analyze the behavior of $R(s,t)$ as $t\to0^+$, we write it as
\[
R(s,t) =  \frac{\left( |t|^{s-3} + \sum_{k\neq0}|t+\pi k|^{s-3}\right)^2}{|t|^{2s-4} + \sum_{k\neq0}|t+\pi k|^{2s-4}} \sin^2t  \to 1,
\]
uniformly in $s$. Therefore, $R(s,t)$ is uniformly positive for $(s,t) \in [0,1]\times [0,\pi/2]$.

Finally, \eqref{eq:line-bound} follows from Lemma \ref{lem:bloch} and $B_s\ge\beta_s^2A_s$  pointwise.
\end{proof}

At $s=1$ the two series $A_1$, $B_1$ coincide and $\sum_k(t+\pi k)^{-2}=\sin^{-2}t$ gives $R(1,t)\equiv1$. This is consistent with the fact that in the local case $\vPi_h$ acts as the identity on
$\nabla V_h$, so that $\Bh=\nabla$ and $C_h=1$, as observed in
Remark~\ref{rem:consistency}.

%\begin{remark} \label{rem:numbers}
%Numerical evidence indicates that the quantity $\beta_s=\inf_tR_s(t)^{1/2}$ is increasing with respect to $s$:
%\[
%\begin{array}{c|cccccc}
% s       & 0.01 & 0.1 & 0.25 & 0.5 & 0.75 & 0.9\\ \hline
% \beta_s & 0.9262 & 0.9326 & 0.9440 & 0.9650 & 0.9868 & 0.9971
%\end{array}
%\]
%so $\beta_*\approx0.9255$, attained as $s\to0^+$, and $\beta_s\nearrow1$ as $s\to1^-$, as expected from the fact that $\nabla v_h \in L_h$.
%\end{remark}
\begin{remark}[Numerical values]\label{rem:numbers}
For $0<t<\pi$ and $a>1$, splitting the sum at $k=0$ gives
$\sum_{k\in\Z}|t+\pi k|^{-a}=\pi^{-a}\big[\zeta(a,t/\pi)+\zeta(a,1-t/\pi)\big]$,
with $\zeta$ the Hurwitz zeta function \cite[\S25.11]{NIST:DLMF}; thus, the ratio \eqref{eq:Rs} can be evaluated in closed form.  The quantity
$\beta_s:=\inf_t R(s,t)^{1/2}$ is increasing in $s$:
\[
\begin{array}{c|cccccc}
 s       & 0.01 & 0.1 & 0.25 & 0.5 & 0.75 & 0.9\\ \hline
 \beta_s & 0.9262 & 0.9326 & 0.9440 & 0.9650 & 0.9868 & 0.9971
\end{array}
\]
so that $\beta_s\ge\beta_0=0.92550$ for all $s\in (0,1)$. 
% and $C_h\le\beta_*^{-1}=1.0805$ in Theorem \ref{thm:bounded} as $H/R\to\infty$. 
%The infimum in $t$ is attained at an interior frequency $t_*\in(1.03,1.10)$, never at the Nyquist frequency $t=\pi/2$. In the language of Section \ref{app:approximation}, $\vartheta_0=(1-\beta_*^2)^{1/2}=0.37876$ on the whole line.
\end{remark}

Inequality \eqref{eq:line-bound} transfers to bounded domains at no cost on the
side of $V_h$; what it does not account for is the truncation of $L_h$ at $\Oext$,
which is the exterior tail of Lemma \ref{lem:tail}.

\begin{theorem}\label{thm:bounded}
Let $d=1$, $p=2$, let the mesh be uniform of size $h$, let $\Omega=(-a,a) \subset (-R,R)$, with
$a\in h\Z$, and $\Oext \supset (-a-H,a+H)$, with $H\ge R$. If
\begin{equation}\label{eq:H-threshold}
  C^2\Big(\frac RH\Big)^{1+2s}<\beta_s^2,
\end{equation}
with $C$ and $\beta_s$ the constants of Lemma \ref{lem:tail} and Theorem
\ref{thm:line} respectively, then \eqref{eq:Ch} holds with
\[
  C_h\le\Big(\beta_s^2-C^2(R/H)^{1+2s}\Big)^{-1/2},
\]
independently of $h$.
\end{theorem}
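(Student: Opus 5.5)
The plan is to combine the whole-line lower bound of Theorem~\ref{thm:line} with the Pythagorean identity \eqref{eq:pythagoras} and the tail estimate of Lemma~\ref{lem:tail}. Fix $v_h\in V_h$. Since $\Omega=(-a,a)$ with $a\in h\Z$, the uniform mesh of $\Omega$ is the restriction of $h\Z$, and $v_h$ extended by zero belongs to $V_h^{\R}$. Theorem~\ref{thm:line} then gives
\[
\|\vPi_h^\R\nabla^s v_h\|_{L^2(\R)}^2\ \ge\ \beta_s^2\,\Xnorm{v_h}^2 .
\]
We may assume $\widetilde{\mathcal T}_h$ consists of cells of $h\Z$ covering $\Oext$; otherwise we enlarge or trim $\Oext$ to a union of such cells still containing $(-a-H,a+H)$. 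Then $\Bh v_h$ coincides with $\vPi_h^\R\nabla^s v_h$ on $\Oext$, and vanishes outside $\Oext$, where the whole-line projection does not.

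The key step is to bound the discarded part. Since $\{\chi_j\}$ is orthonormal, we have
\[
\|\vPi_h^\R\nabla^s v_h\|_{L^2(\R)}^2=\|\Bh v_h\|_{L^2(\Oext)}^2+\|\vPi_h^\R\nabla^s v_h\|_{L^2(\R\setminus\Oext)}^2 .
\]
Because $\R\setminus\Oext$ is a union of cells, the projection is $L^2$-stable there:
\[
\|\vPi_h^\R\nabla^s v_h\|_{L^2(\R\setminus\Oext)}\le\|\nabla^s v_h\|_{L^2(\R\setminus\Oext)} .
\]
Lemma~\ref{lem:tail} with $p=2$, $d=1$ bounds the right-hand side by $C(R/H)^{1/2+s}\,\Xnorm{v_h}$. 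Combining these three estimates yields
\[
\|\Bh v_h\|_{L^2(\Oext)}^2\ \ge\ \Big(\beta_s^2-C^2(R/H)^{1+2s}\Big)\Xnorm{v_h}^2 .
\]
Under \eqref{eq:H-threshold} the bracket is positive, and taking square roots gives the stated $C_h$. The bound is independent of $h$ because $\beta_s$ is $h$-independent by Remark~\ref{rem:scaling}, and the constant of Lemma~\ref{lem:tail} does not depend on the mesh.

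The only delicate point is the compatibility of $\Oext$ with the cell structure. The stability of the projection restricted to the exterior needs $\R\setminus\Oext$ to be a union of full cells, and the hypothesis on $H$ must survive adjusting $\Oext$ to mesh cells. I would handle this by noting that the argument only needs $\Oext$ to be a union of cells of $h\Z$ containing $(-a-H,a+H)$, which is exactly the setting of $\widetilde{\mathcal T}_h$. Alternatively, Lemma~\ref{lem:tail} can be applied directly with the actual distance from $\partial\Omega$ to $\partial\Oext$, which is at least $H$.
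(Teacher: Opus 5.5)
Your proof is correct and follows the paper's argument. Both proofs combine three ingredients: the whole-line bound \eqref{eq:line-bound} for $V_h\subset V_h^{\R}$, the tail estimate of Lemma~\ref{lem:tail}, and the fact that the cells of $\widetilde{\mathcal T}_h$ are cells of $h\Z$.

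The only difference is bookkeeping. You split the projected field $\vPi_h^{\R}\nabla^s v_h$ into its parts on $\Oext$ and on $\R\setminus\Oext$, and use cellwise $L^2$-stability on the exterior. The paper instead splits the defect via \eqref{eq:splitting} and \eqref{eq:pythagoras}, and drops the exterior part of the whole-line defect. By Pythagoras on the exterior cells, these give exactly the same inequality.
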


\begin{proof}
Every $v_h\in V_h$ vanishes outside $\Omega$, so its
extension by zero is a finitely supported combination of hat functions on
$h\Z$; thus $V_h\subset V_h^{\R}$ and \eqref{eq:line-bound} applies. Since the
elementwise averages defining $\vPi_h$ on $\Oext$ are the restriction of those on
$h\Z$,
\[
  \delta_h(v_h)^2=\int_{\Oext}|(I-\vPi_h)\nabla^sv_h|^2
  \le\int_{\R}|(I-\vPi_h^\R)\nabla^sv_h|^2
  \le \big(1-\beta_s^2\big)\|v_h\|^2_{\widetilde X^{s,2}(\Omega)} .
\]
By Lemma \ref{lem:tail} with $p=2$ and $d=1$, the exterior tail is bounded by
\[
\|\nabla^sv_h\|^2_{L^2(\R\setminus\Oext)}
\le C(R/H)^{\frac12+s}\|v_h\|_{\widetilde X^{s,2}(\Omega)}.
\]
Inserting both into
\eqref{eq:splitting} and then into \eqref{eq:pythagoras},
\[
  \|\Bh v_h\|^2_{L^2(\Oext)}
  \ \ge\ \Big(\beta_s^2-C^2(R/H)^{1+2s}\Big)\|v_h\|^2_{\widetilde X^{s,2}(\Omega)},
\]
and \eqref{eq:H-threshold} makes the term in parenthesis positive.
\end{proof}

%%%%%%%%%%%%%%%%%%%%%%%%%%%%%%%%%%%%%%%%%%%%
%%%%%%%%%%%%%%%%%%%%%%%%%%%%%%%%%%%%%%%%%%%%
%%%%%%%%%%%%%%%%%%%%%%%%%%%%%%%%%%%%%%%%%%%%	
\end{document}